\providecommand{\U}[1]{\protect\rule{.1in}{.1in}}
\theoremstyle{plain}
\newtheorem{theorem}{Theorem}[section]
\newtheorem{lemma}[theorem]{Lemma}
\newtheorem{prop}[theorem]{Proposition}
\theoremstyle{definition}
\newtheorem{definition}{Definition}[section]
\newtheorem{remark}{Remark}[section]
\newtheorem*{maintheorem*}{Main Theorem}
\newtheorem*{maincorollary*}{Main Corollary}
\newcommand{\R}{\ensuremath{\mathbb{R}}}
\newcommand{\E}{\ensuremath{\mathbb{E}}}
\def\V{V}
\def\Vprime{V^\prime}
\title[LDP for a T-monotone obstacle problem  with  multiplicative noise]
{Large deviations for an obstacle problem	with T-monotone	operator and  multiplicative noise }
\author[Y. Tahraoui]{Yassine Tahraoui}
\address{Scuola Normale Superiore, Piazza dei Cavalieri, 7, 56126 Pisa, Italia}
\email{\href{mailto:yassine.tahraoui at sns.it}{yassine.tahraoui at sns.it}}
\date\today
\keywords{Variational inequality,	Stochastic PDEs, Obstacle problems, Large
deviations}
\subjclass{35K86, 35R35, 60F10,	60H15}
\begin{document}

\begin{abstract}
	In this paper, we	study the	large deviation principle	(LDP) for obstacle problems governed	by	a	T-monotone	operator and	small multiplicative	stochastic	reaction.	Our approach	  relies on a combination of new sufficient condition to	prove	LDP	by	Matoussi, Sabbagh and Zhang [\textit{Appl. Math. Optim.}, 2021]	and	Lewy-Stampacchia	inequalities	to	manage	the		Lagrange
	multiplier	associated	with	the	obstacle.
\end{abstract} 
	\maketitle
\section{Introduction}
We are interested in investigating	an	 asymptotic properties of the	solution to	a	stochastic obstacle	problems with small	noise.	More precisely,	our	aim	is	to	prove the  	large	deviation	principle	for the	solution  $(u_\delta)_{\delta\downarrow	0}$ to some obstacle problems which can be written  (see	Section	\ref{sec-formulation})
\begin{align}\label{obstacle-pb}
 \partial I_{K}(u_\delta)\ni f-\partial_t \Big( u_\delta-\displaystyle\delta\int_0^\cdot G(u_\delta,\cdot)dW\Big)- A(u_\delta,\cdot);	\quad	\delta>0,  
\end{align}
where $K$ is a closed convex subset of  $L^p(\Omega;L^p(0,T;V))$ related to a  constraint $\psi$, $A$ is a nonlinear T-monotone operator defined on some Banach	space $V$, $(\Omega,\mathcal{F}, (\mathcal{F}_t)_{t\geq 0}, P)$ is a filtered probability space  and  $W(t)$ is a $Q$-Wiener process in some separable Hilbert space $H.$\\

Obstacle type problems 	\eqref{obstacle-pb} can be used to describe several  phenomena		and	arise in many mathematical models associated with a number of fields including	Physics, Biology and Finance.	For example, 	 the evolution of damage in a continuous	medium taking into account the microscopic description \cite{Bauz},		the	question	of	 American option	pricing,	some	questions	of	flows in porous media	and phase transition questions,	some optimal stopping-time problems	and		population dynamics	models,	see \textit{e.g.}	\cite{Bauz,Bens-lions,RoK13,Yas}	and	references	theirin.	In the deterministic setting, it can be formulated by using variational inequalities,	see \textit{e.g.} \cite{GMYV,GYV,	Mok-Yas-Val,Yas} and their references.
Concerning 	the	well-posedness	questions, many authors have been interested in the topic, without	seeking to be exhaustive, let us mention  \cite{DonatiMartinPardoux}	about	the	reflected solutions of parabolic SPDEs driven by a space time	white noise	on the spatial	interval	$[0,1]$,			quasilinear			stochastic	PDEs	with	obstacle	in		\cite{DenisMatoussiZhang}	and	\cite{Hau-Par}	for	stochastic	variational	inequalities.	Besides	the	well-posedness,	a	regularity	properties	of	the	reflected	stochastic	measure		was	studied	in	\cite{YV22},	where stochastic Lewy-Stampacchia’s inequalities were proved	in	the	case	of	stochastic  T-monotone obstacle problems	and	then	\cite{IYG,NYGA23}	for	 stochastic scalar conservation laws with constraint	and		pseudomonotone	parabolic	obstacle	problems.	Recently, the existence and uniqueness of ergodic invariant measures have been studied in	\cite{Yas24}. 
\\

Large	deviation	principle	concerns	the	exponential	decay,	with	the	corresponding rate	 functions,	of	the laws of solutions 	to	a	different	stochastic	dynamics.	There	exist	many	works	on	large	deviations	for	dynamical systems	in	finite/infinte	dimensional	setting	driven	by	 white noises.
Without	exaustivness,	let	us	refer	to	the	work	of	Freidlin	and	Wentzell	\cite{Freid-Went}	for	the	small	perturbation	for	stochastic	differential	equations,		\cite{BD00}	
			about	the	variational	representation		for	functionals	of	Wiener	process	and	its	application	to	prove	large	deviation	principle	and	\cite{BD08,Daprato-Zab92,Hong21,Liu10,Liu20,Ren-Zhang08}	for	large	deviations	results	about	different	stochastic	dynamics	related	with	stochastic	equations.\\
			
	We	recall	that	\eqref{obstacle-pb}	is	a	free	boundary	type	problem		and	also		can	be	understood	as	coupled	system	of	 stochastic	PDEs	as	well,	which	generates	many	mathematical	difficulties	in	the	analysis	of	a	such	problem.		In	particular,		the	management	of	the	singularities	caused	by	the	obstacle.	Therefore,	one	needs	to	manage	in	an	appropriate	way	the	stochastic	reflected measure,	resulting	from	the	interaction	between	the	solution	and		the	obstacle	near	to	the	contact	set.	
	Concerning		large	deviations	of	obstacle	problem,	let	us	mention		\cite{XU09}	for
	 large deviation principle to SPDEs with reflection,			studied	in	\cite{DonatiMartinPardoux},	by	using	the	 weak convergence approach.	Another	type	of	result	concerns	large deviation principle for invariant measures to solutions of reflected	SPDEs  driven by a space–time white noise	in	1D,	see	\cite{Zhang12}.		Recently,	the	authors	in	\cite{Matou21}	provided a new	sufficient condition to
	verify the criteria of	 the weak convergence	and	used	it		to	study	large	deviation	principle	for	quasilinear	stochastic	obstacle	problems.
	Our	aim	is	to	present	a	result	about	large	deviations		  for stochastic	obstacle	problem	governed 	by	 T-monotone	operator,	including	p-Laplace	type	operator,	a	multiplicative	noise	and	large	class	of	obstacles.	The	method	is	based	on	the	combination	of			proving	the	new	sufficient	condition	of	\cite[Theorem	3.2]{Matou21}	in	our	setting	and	using	Lewy-Stampacchia	inequalities	to	manage	the	stochastic	reflected measure.	\\

\subsubsection*{Structure of the paper}	After giving the hypotheses, we	recall 	some	basics	about		stochastic	obstacle	problems	and	large	deviations	in	Section	\ref{Section-notation}.	Section	\ref{sec-formulation}	is	devoted	to	the	mathematical	formulation	of	the	problem	and	the	main	results	of	this	paper.	In	Section	\ref{Sec-Skeleton},	we	prove	the	well-posedness	and	the	corresponding	Lewy-Stampacchia	inequalities	for	the	skeleton	equation	\eqref{Skeleton},	by	using	penalization	and	approximation	techniques.	Section	\ref{section-LDP-proof}	concerns	the	proof	of	the	large	deviation	principle	for	the	stochastic	dynamics	associated	with	\eqref{obstacle-pb},	where	Lewy-Stampacchia	inequalities	will	be	used	to	manage	the	singularities	caused	by	the	obstacle,	which	serves	to	study	the	continuity	of	the		 	skeleton equations	\eqref{Skeleton}	from	the	 Cameron–Martin Hilbert space	endowed	with	the	weak	topology	to  some	Polish	space	endowed	with	the	strong	topology.	Finally,	we	study	the	limit	as	$\delta\to0$	in	appropriate	sense.

\section{Content of the study}\label{Section-notation}
\subsection{Notation and  functional spaces}
Let us denote by $D \subset \R^d,d\geq1$ a Lipschitz bounded domain, $T>0$ and consider $\max(1,\frac{2d}{d+2}) <p <+\infty$.  
As usual, $p^\prime=\frac{p}{p-1}$ denotes the conjugate exponent of $p$, $V=W^{1,p}_0(D)$, the sub-space of elements of $W^{1,p}(D)$ with null trace, endowed with Poincar\'e's norm,  $H=L^2(D)$ is identified with its dual space so that, the corresponding dual spaces to $V$, $V^\prime$ is $W^{-1,p^\prime}(D)$ and the Lions-Guelfand triple $\V \hookrightarrow_d H=L^2(D)   \hookrightarrow_d \Vprime$ holds. 
Denote by $p^*=\frac{pd}{d-p}$ if $p<d$ the Sobolev embedding exponent and remind that 
\begin{align*}
\text{ if }p<d,\quad &V \hookrightarrow L^a(D),\ \forall a \in [1,p^*]\text{ and compactly if }a \in [1,p^*),
\\
\text{ if }p=d,\quad & V \hookrightarrow L^a(D),\ \forall a <+\infty\text{ and compactly}, 
\\
\text{ if }p>d,\quad &V \hookrightarrow C(\overline D)\text{ and compactly }.
\end{align*}
Since $\max(1,\frac{2d}{d+2}) <p <+\infty$, the compactness of the embeddings hold in Lions-Gelfand triple.	The duality bracket for $T \in V^\prime$ and $v \in V$ is denoted $\langle T,v\rangle$ and the scalar product in $H$ is denoted by $(\cdot,\cdot)$.
\\
 
 Let $(\Omega,\mathcal{F},P)$ be  a complete probability space  endowed with a  right-continuous filtration $\{\mathcal{F}_t\}_{t\geq 0}$\footnote{For	example,	$(\mathcal{F}_t)_{t\geq 0}$ is the augmentation of the filtration generated by  $\{W(s), 0 \leq s\leq t \}_{0\leq t\leq T}$.} completed with respect to the measure $P$.  $W(t)$ is	a	$\{\mathcal{F}_t\}_{t\geq 0}$-adapted 	$Q$-Wiener process in $H$,	where $Q$	is 	non-negative symmetric   operator  with	finite	trace	\textit{i.e.} $tr Q < \infty$. Denote by $\Omega_T=(0,T)\times \Omega$ and $\mathcal{P}_T$ the predictable $\sigma$-algebra on $\Omega_T$\footnote{$\mathcal{P}_{T}:=\sigma(\{ ]s,t]\times F_s \vert 0\leq s < t \leq T,F_s\in \mathcal{F}_s \} \cup \{\{0\}\times F_0 \vert F_0\in \mathcal{F}_0 \})$ (see \cite[p. 33]{RoK15}). Then, a process defined on $\Omega_T$ with values in a given space $E$ is predictable if it is $\mathcal{P}_{T}$-measurable.}. 
Set $H_0=Q^{1/2}H$ and we recall that $H_0$ is a separable Hilbert space endowed with the inner product $ (u,v)_0=(Q^{-1/2}u,Q^{-1/2}v)$, for any $u,v\in H_0$. The space	$(L_2(H_0,H), \Vert \cdot \Vert_{L_2(H_0,H)})$ stands	for the space of Hilbert-Schmidt operators from $H_0$ to $H$\footnote{	$\Vert T \Vert_{L_2(H_0,H)}^2=\displaystyle\sum_{k\in\mathbb{N}}\Vert	Te_k\Vert_H^2$	where	$	\{e_k\}_{k\in\mathbb{N}}$	is	an	orthonormal basis	for	$H_0$,	see	\textit{e.g.}	\cite[Appendix B]{RoK15}.},	$\E$	stands	for	the	expectation	with	respect	to	$P$. \\

 We recall that an element $\xi\in L^{p'}(0,T;V')$	(resp.	$L^{p'}(\Omega_T;V')$) is called \textit{non-negative} \textit{i.e.}	$\xi\in ( L^{p^\prime}(0,T;\Vprime))^+$(resp.	$(L^{p'}(\Omega_T;V'))^+$)	iff 
$$	\int_0^T\langle \xi,\varphi\rangle_{V',V}\,dt\geq 0\quad	(\text{	resp.	}\E\int_0^T\langle \xi,\varphi\rangle_{V',V}\,dt\geq 0)$$
holds for all $\varphi\in L^p(0,T;V)$ (resp.	$L^{p}(\Omega_T;V)$)	such that $\varphi\geq 0$. In this case, with a slight abuse of notation, we will often write $\xi\geq 0$.\\
Denote by 	$ L^{p}(0,T;V)^* =( L^{p^\prime}(0,T;\Vprime))^+ - (L^{p^\prime}(0,T;\Vprime))^+ \varsubsetneq L^{p^\prime}(0,T;\Vprime)$
 the order dual as being the difference of two non-negative elements of $L^{p^\prime}(0,T;\Vprime)$, more precisely $h\in L^{p}(0,T;V)^*$ iff
$h=h^+-h^-$ with $h^+,h^- \in L^{p^\prime}(0,T;\Vprime)^+$.

\subsection{Assumptions}
We will consider in the sequel the following assumptions:
\begin{enumerate}
	\item[H$_1$] :
	Let $A: V\times [0,T] \to V^\prime$, \ $G: H\times  [0,T] \to L_2(H_0,H)$, $\psi : [0,T] \to V$, $f : [0,T] \to V^\prime$	such that:
	\begin{align*}
 \text{ for any	} v\in V \text{	and	} u \in H:\quad A(v,\cdot),	G(u,\cdot),	\psi \text{	and	}	f	\text{	 are measurable. }
	\end{align*}

	\item[H$_2$] : $\exists \alpha,\bar K >0,  \lambda_T, \lambda \in \mathbb{R}$, $l_1\in L^\infty([0,T])$ and $g\in L^{\infty}([0,T])$ such that:
	\begin{enumerate}
		\item[H$_{2,1}:$]   $t\in [0,T]$ a.e.,\quad  $\forall  v \in V,\quad \langle A(v,t),v\rangle+\lambda \Vert v\Vert_H^2+l_1(t) \geq \alpha \Vert v\Vert_V^p.$
		\item[H$_{2,2}:$]  ($T-$ monotonicity)   \quad   $t\in [0,T]$ a.e.,\quad  $\forall v_1,v_2 \in V$, 
		\begin{align*}
		\lambda_T(v_1-v_2,(v_1-v_2)^+)_H+\langle A(v_1,t)-A(v_2,t),(v_1-v_2)^+\rangle \geq 0.
		\end{align*}
		Note that since $v_1-v_2 = (v_1-v_2)^+ - (v_2-v_1)^+$,  $\lambda_T Id+A$ is also monotone.
		\item[H$_{2,3}:$]   $t\in [0,T]$ a.e.,\quad  $\forall  v \in V,\quad  \Vert A(v,t)\Vert_{V^\prime} \leq \bar K\Vert v\Vert_V^{p-1}+g(t).$
		\item[H$_{2,4}:$]   (Hemi-continuity) \quad    $t\in [0,T]$ a.e.,\quad  $\forall v,v_1,v_2 \in V$,\\ \hspace*{3cm} $ \eta \in \R \mapsto \langle A(v_1+\eta v_2,t),v\rangle$ is continuous.
	\end{enumerate}
	%
	\item[H$_3$] : $\exists  M,L >0$  such that
	\begin{enumerate}
		\item[H$_{3,1}:$]  $t\in [0,T]$ a.e.,\quad  $\forall  \theta,\sigma \in H$, \quad  $\Vert G(\theta,t)-G(\sigma,t)\Vert_{L_2(H_0,H)}^2 \leq M \Vert \theta-\sigma\Vert_H^2$.
		\item[H$_{3,2}:$]  $t\in [0,T]$ a.e.,\quad  $\forall  u \in H$, \quad  $ \Vert G(u,t)\Vert_{L_2(H_0,H)}^2 \leq L (1+\Vert u\Vert_H^2).$
	\end{enumerate}
	\item[H$_4$] : $\psi \in  L^\infty(0,T;V),\  \frac{d \psi}{dt}  \in  L^{p^\prime}(0,T;\Vprime)$ and	$G(\psi,t)=0$ a.e.	$t\in [0,T]$.
	\item[H$_5$] :  $f\in L^{\infty}(0,T;V^\prime)$ and  assume  that 
$
h^+-h^-=h = f - \partial_t \psi  - A(\psi,\cdot) \in L^{p}(0,T;V)^*.
$		
	\item[H$_6$] :  $u_0 \in H$ satisfies the constraint, \textit{i.e.} $u_0 \geq \psi(0)$.
	\item[H$_{7}$]	:  (Strong monotonicity)  $\exists \lambda_T\in \mathbb{R}, \exists \bar{\alpha} >0$:   $t\in [0,T]$ a.e.,\quad  $\forall v_1,v_2 \in V$, 
	\begin{align*}
	\langle A(v_1,t)-A(v_2,t),v_1-v_2\rangle \geq \bar{\alpha}\Vert v_1-v_2\Vert_V^p- \lambda_T\Vert v_1-v_2\Vert_H^2.
	\end{align*}
	
\end{enumerate}
\begin{remark}
	Note	that	$G(\psi,t)=0$	a.e.	in	$[0,T]$	in	
	$H_4$		plays	a	crucial	role	in	the	analysis		of	skeleton	equation	\eqref{Skeleton}.	More	precisely,	it	serves	in	the	proof	of	Lewy-Stampacchia	inequalities	\eqref{LS-Skeleton}	associated		with		\eqref{Skeleton}	"deterministic",	by	using	the	natural	assumption	$H_5$	(see	\textit{e.g.}	\cite[Thm.	2.2]{GMYV}),	to	manage	the	reflected	measure	related	to	the	obstacle	in	appropriate	sense.	Moreover,	a	large	class	of	$V$-valued	obstacle 	problems	  can reduce to
	the question of a positivity obstacle problem with a stochastic reaction term vanishing	at $0$,			see	\cite[Rmk.	3]{YV22}.
\end{remark}
\begin{remark}
	\begin{itemize}
		\item[(i)]		In	the	case	$p\geq2$,	the	assumptions	on		$l_1,g,
		\psi $ and
		$f$	can	be	relaxed	to		weaker	assumptions,	more	precisely,	we	can	consider	$l_1\in L^1([0,T]),	g\in	L^{p^\prime}([0,T])$ in	H$_2$,
		$\psi \in  L^p(0,T;V)$	in	H$_4$ and
		$f\in L^{p^\prime}(0,T;V^\prime)$	in	$H_5$.\\
		
				\item[(ii)]	The	assumptions	$l_1,g\in L^\infty([0,T])$ in	H$_2$,
		$\psi \in  L^\infty(0,T;V)$	in	H$_4$ and
			  $f\in L^{\infty}(0,T;V^\prime)$	in	$H_5$	are	assumed	to	ensure	the	existence	of	solution	to	the	Skeleton	equations	\eqref{Skeleton}	if	$\max(1,\frac{2d}{d+2})<p<2$.	Else,		extra	assumptions	on	$G$		has	to	be	considered,		we	refer	to	\textit{e.g.}	\cite[Section	2]{Liu10}	for	such	assumptions.  
					  \\
		  \item[(iii)] 	In	the	case		$1<p\leq\dfrac{2d}{d+2}	(d\geq	3)$,		one	needs	either	to	use	a method of some finite dimensional approximation of
		  the diffusion $G$(to	establish	large deviation principle	in	the	case	of	equations),	which	leads	to	an	extra	assumption	on	$G$	\cite[Section	2	(A4)]{Liu10}	or	some	temporal regularity on	$G$,	see	\textit{e.g.}	\cite[Section	2	(A4)]{Liu20}.	The	situation		seems	more	delicate		because	of	the	reflected	measure,	which	depends	on	the	solution	as	well.\\
		  
		\item[(iv)]	The	role	of	$H_7$	is		to	establish	large deviation principle	in		$L^p(0,T;V)\cap	C([0,T];H)$.	Else,	we	obtain		large deviation principle	only	in	$C([0,T];H)$,		see	Theorem	\ref{Main-Thm-LDP}.

	\end{itemize}
\end{remark}

\subsection{Stochastic obstacle problems} \label{section2}

	Denote by $K$ the convex  set of admissible functions
	$$K=\{v \in L^p(\Omega_T;V),  \quad v(x,t,\omega)\geq \psi(x,t) \quad \text{a.e. in } D\times\Omega_T\}.$$

The	question	is to	find $(u,k),$ in a space defined straight after, solution to
\begin{align}\label{I}
\begin{cases}
du+A(u,\cdot)ds+ kds= fds+G(u,\cdot)dW \quad & \text{in} \quad D\times  \Omega_T, \\ 
u(t=0)=u_0 & \text{in} \quad H,\  a.s.,\\ 
u \geq \psi & \text{in} \quad  D\times  \Omega_T, \\  
u=  0   &\text{on} \quad\partial D\times \Omega_T,  \\
\langle k,u-\psi\rangle  = 0 \hspace*{0.2cm} \text{and} \hspace*{0.2cm} k \leq 0  & \text{in} \quad  \Omega_T.
\end{cases}
\end{align}
\begin{definition}\label{def}
	The pair $(u,k)$ is a solution to Problem \eqref{I} if:
	\begin{itemize}
		\item[$\bullet$]  $u:\Omega_T\to H$ and $k:\Omega_T \to V^\prime$ are predictable	processes, $u \in L^2(\Omega;C([0,T];H))$.
		\item[$\bullet$]  	 $u\in L^p(\Omega_T;V)$, $u(0,\cdot)=u_0$ and $u\geq \psi$, \textit{i.e.}, $u\in K$.
			\item  $k \in L^{p^\prime}(\Omega_T; V^\prime)$ with 
		\begin{align}\label{230530_01}
		- k\in (L^{p^\prime}(\Omega_T;V^\prime))^+ \ \text{and} \ \E{\int_0^T\langle k,u-\psi\rangle_{V',V}\,dt}=0.
		\end{align}
			\item[$\bullet$] P-a.s, for all $t\in[0,T]$: 
		$u(t)+\displaystyle\int_0^t [A(u,\cdot)+k]ds=u_0+\displaystyle\int_0^t G(u,\cdot)dW(s)+\displaystyle\int_0^t fds$	in	$V^\prime$.
		
	\end{itemize}
\end{definition}

\begin{remark}
	Condition \eqref{230530_01} can be understood as a minimality condition on $k$ in the sense that $k$ vanishes on the set $\{u>\psi\}$. Moreover, \eqref{230530_01} implies that, for all $v \in K$,  
	\[\E{\int_0^T\langle k,u-v\rangle_{V',V}\,dt}  \geq 0.\]
\end{remark}
\begin{remark}
	Note that  Problem \eqref{I} can be written in the equivalent form (see \cite[p.$7-8$]{BARBU}):
	\begin{align*} 
	\partial I_{K}(u)\ni f-\partial_t \left(u-\int_0^\cdot G(u)\,dW\right) - A(u,\cdot),
	\end{align*}
	where $\partial I_{K}(u)$ represents the sub-differential of $I_{K}: L^p(\Omega_T;V)\rightarrow \overline{\mathbb{R}} $ defined as 
	\begin{align*}
	I_{K}(u)=\left\{
	\begin{array}{l}
	0, \quad\quad  u \in K,\\ [1.2ex]
	+\infty, \quad u \notin K,
	\end{array}
	\right.
	\end{align*}
	and 
		$\partial I_{K}(u)=N_{K}(u):=
	\left\{y \in L^{p^\prime}(\Omega_T;V^\prime); \ \E\displaystyle\int_0^T\langle y,u-v\rangle_{V',V}dt \geq 0, \ \text{for all} \ v \in K \right\}.$
	
\end{remark}
\begin{theorem}\cite[Theorem	1]{YV22}\label{TH1}
	Under Assumptions (H$_1$)-(H$_6$), there exists  a unique solution $(u,k)$ to Problem \eqref{I} in the sense of Definition \ref{def}. Moreover,  the following  Lewy-Stam\-pac\-chia's inequality\footnote{See	\cite[Definition 1.3.]{NYGA23}.} holds
	 $0\leq  \partial_t \Big( u-\displaystyle\int_0^\cdot G(u,\cdot)dW\Big)+ A(u,\cdot)-f \leq h^-=  \left(f - \partial_t  \psi  - A(\psi,\cdot)\right) ^-.$
	\end{theorem}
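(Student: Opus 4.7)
The natural strategy is a penalization method combined with a monotonicity/compactness argument, with the Lewy--Stampacchia bound obtained as a by-product of a careful choice of penalty. Uniqueness and the inequality carry the analytical weight; existence reduces, once estimates are in place, to a standard passage to the limit.

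\emph{Step 1 — Penalized SPDE.} For $\eps>0$ I consider
\begin{equation*}
du_\eps + A(u_\eps,\cdot)\,dt - \tfrac{1}{\eps}(u_\eps-\psi)^-\,dt = f\,dt + G(u_\eps,\cdot)\,dW,\qquad u_\eps(0)=u_0,
\end{equation*}
so that $k_\eps := -\tfrac{1}{\eps}(u_\eps-\psi)^-\le 0$. The map $v\mapsto A(v,\cdot)-\tfrac1\eps(v-\psi)^-$ is still coercive (via H$_{2,1}$), hemicontinuous (H$_{2,4}$) and monotone up to the zero-order term $\lambda_T\mathrm{Id}$ (H$_{2,2}$ plus monotonicity of $v\mapsto -(v-\psi)^-$), so Pardoux--Krylov--Rozovskii theory yields a unique predictable solution $u_\eps\in L^2(\Omega;C([0,T];H))\cap L^p(\Omega_T;V)$.

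\emph{Step 2 — Uniform estimates and Lewy--Stampacchia at the penalized level.} Itô's formula for $\|u_\eps\|_H^2$ together with H$_{2,1}$, H$_{3,2}$, Burkholder--Davis--Gundy and Gronwall gives uniform control in $L^2(\Omega;C([0,T];H))\cap L^p(\Omega_T;V)$ and, by H$_{2,3}$, a bound on $A(u_\eps,\cdot)$ in $L^{p'}(\Omega_T;V')$. The crucial step is an $\eps$-uniform bound on $k_\eps$: writing $w_\eps := u_\eps-\psi$ and using H$_4$--H$_5$ together with the decomposition $h=h^+-h^-$, I test the equation satisfied by $w_\eps + \int_0^\cdot h^- \,ds$ against suitable positive-part combinations, exploiting $G(\psi,\cdot)=0$ to annihilate the stochastic increment on the contact set. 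This yields, after an Itô formula of the type used in \cite[Thm.~2.2]{GMYV}, the two-sided bound
\begin{equation*}
0\le -k_\eps\le h^-\quad\text{in } L^{p'}(\Omega_T;V'),
\end{equation*}
uniformly in $\eps$.

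\emph{Step 3 — Passage to the limit.} Extracting subsequences, $u_\eps\rightharpoonup u$ in $L^p(\Omega_T;V)$, $A(u_\eps,\cdot)\rightharpoonup\chi$ and $k_\eps\rightharpoonup k$ in $L^{p'}(\Omega_T;V')$, while $G(u_\eps,\cdot)\rightharpoonup G(u,\cdot)$ in $L^2$ thanks to H$_{3,1}$ once a strong limit in $L^2(\Omega_T;H)$ is secured via the compactness of $V\hookrightarrow H$ and a stochastic Aubin--Lions argument. The estimate on $k_\eps$ forces $\frac{1}{\eps}\|(u_\eps-\psi)^-\|_{L^2}^2\to 0$, hence $u\ge\psi$, and transmits the bounds $-k\in (L^{p'}(\Omega_T;V'))^+$, $-k\le h^-$ to the limit. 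Identifying $\chi=A(u,\cdot)$ uses the T-monotone version of Minty's trick: starting from
\begin{equation*}
\E\int_0^T\langle A(u_\eps,\cdot)-A(v,\cdot),(u_\eps-v)^+\rangle\,dt\ge -\lambda_T\,\E\int_0^T\|(u_\eps-v)^+\|_H^2\,dt
\end{equation*}
for $v\in V$-valued test processes, passing to the $\liminf$ and then invoking hemicontinuity H$_{2,4}$ with $v=u\pm\eta w$, $\eta\downarrow 0$. Finally, the complementarity $\E\int_0^T\langle k,u-\psi\rangle\,dt=0$ is obtained by testing the penalized equation against $u_\eps-\psi$, using $\langle k_\eps,u_\eps-\psi\rangle=-\tfrac1\eps\|(u_\eps-\psi)^-\|_H^2\le 0$, and combining lower semicontinuity on the deterministic side with the non-positivity of the limit.

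\emph{Step 4 — Uniqueness and main difficulty.} For two solutions, Itô's formula applied to $\|u_1-u_2\|_H^2$ combined with T-monotonicity H$_{2,2}$, Lipschitz H$_{3,1}$ and the sign property $\E\int_0^T\langle k_1-k_2,u_1-u_2\rangle\,dt\ge 0$ (a direct consequence of the minimality condition) closes via Gronwall. The principal obstacle is really \emph{Step 2}: securing $0\le -k_\eps\le h^-$ uniformly in $\eps$ without any regularity of $k$ beyond $V'$, which is where the assumption $G(\psi,\cdot)=0$ from H$_4$ is indispensable, as it is what allows the deterministic Lewy--Stampacchia machinery to be imported into the stochastic framework.
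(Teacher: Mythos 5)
Your penalization strategy is the right one and matches the approach of \cite{YV22}, which the paper cites for Theorem~\ref{TH1} without reproving (the paper's Section~\ref{Sec-Skeleton} proves the deterministic analogue, Proposition~\ref{Prop-Skeleton}, by the same scheme). But there is a genuine gap in your Step~2. In your penalized SPDE you keep the original diffusion $G(u_\eps,\cdot)$. Testing against $-(u_\eps-\psi)^-$ and applying It\^o's formula to $\|(u_\eps-\psi)^-\|_H^2$ produces a non-negative second-order correction involving $\mathbf{1}_{\{u_\eps<\psi\}}\,G(u_\eps,\cdot)$. On $\{u_\eps<\psi\}$ one has $u_\eps\ne\psi$, so $G(\psi,\cdot)=0$ does \emph{not} make this term vanish; Lipschitz continuity $H_{3,1}$ would only bound it by $M\|u_\eps-\psi\|_H^2$, not by $\|(u_\eps-\psi)^-\|_H^2$, and the Gronwall argument does not close. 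The fix used in the paper's skeleton proof (see the footnote in Subsection~\ref{subs-pen}) and in \cite{YV22} is to replace the diffusion in the penalized problem by $\widetilde G(u_\eps,\cdot):=G(\max(u_\eps,\psi),\cdot)$, so that $\widetilde G(u_\eps,\cdot)=G(\psi,\cdot)=0$ identically on $\{u_\eps\le\psi\}$ and the offending correction vanishes exactly; one then verifies $\widetilde G(u_\eps,\cdot)\to G(u,\cdot)$ once $u\ge\psi$ is established in the limit. Your phrase ``exploiting $G(\psi,\cdot)=0$ to annihilate the stochastic increment on the contact set'' gestures at the right idea, but without the explicit modification of $G$ the annihilation simply does not occur.

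Two further points. First, the linear penalty $-\tfrac1\eps(u_\eps-\psi)^-$ is adequate when $p\ge2$, but for $\max(1,\tfrac{2d}{d+2})<p<2$ the paper penalizes with $-\tfrac1\eps[(u_\eps-\psi)^-]^{\tilde q-1}$, $\tilde q=\min(2,p)$; a linear term contributes $O(\|u_\eps\|_H)$ to $\|\bar A(u_\eps)\|_{V'}$, which cannot be dominated by $\|u_\eps\|_V^{p-1}$ when $p<2$, so the growth hypothesis required by the abstract existence theory fails. Second, your Step~2 asserts the two-sided bound $0\le -k_\eps\le h^-$ uniformly in $\eps$ at the penalized level; the argument you sketch only yields a uniform $L^{\tilde q'}$-bound on $k_\eps$ (the lower bound being trivial by construction). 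The pointwise upper bound $-k\le h^-$ is obtained for the \emph{limit} solution via an auxiliary obstacle problem (obstacle from above equal to $u$, right-hand side $f+h^-$), followed by an identification argument showing its solution coincides with $u$, exactly as in Subsection~\ref{subsection-LS} for the skeleton equation.
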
 

\subsection{Large	deviations}
Let $E$ be a Polish space with the Borel $\sigma-$field $\mathcal{B}(E)$.
\begin{definition}\cite[Def. 4.1]{BD00}(Rate function)
	A function $I:E\to [0,\infty]$ is called a 	rate function	 on $E$ if  for each $M <\infty$ the level set $\{ x\in E: I(x) \leq M\}$ is a compact subset.
\end{definition}
\begin{definition}
		(Large deviation principle) A family $\{ X^\epsilon\}_\epsilon$ of $E-$valued random elements is said to satisfy a large deviation principle  with rate function $I$ if for each Borel subset $G$ of $E$
		$$ -\inf_{x\in G^{0}} I(x)\leq \liminf_{\epsilon\to 0}\epsilon^2\log P(X^\epsilon \in G)\leq  \limsup_{\epsilon\to 0}\epsilon^2\log P(X^\epsilon \in G) \leq -\inf_{x\in \bar{G}} I(x),$$
		where $G^0$ and $\bar{G}$ are respectively the interior and the closure of $G$ in $E$.	
	\end{definition}

\begin{definition}\cite[Def. 4.2]{BD00}
	Let $I$ be a rate function on $E$. A family $\{ X^\delta\}_{\delta>0}$ of E-valued random elements is said to satisfy the Laplace principle   on $E$ with rate function $I$ if for all real valued, bounded, and continuous functions $h$ on $E$:
	\begin{equation}
		\lim_{\epsilon\to 0}\epsilon\log \E\big\{ \exp[-\dfrac{1}{\epsilon}h(X^\epsilon)]\big\}=-\displaystyle\inf_{x\in E}\{ h(x)+I(x)\}.
	\end{equation}
\end{definition}
We	recall	that the large deviation principle 	and	the Laplace principle  are equivalent	for  $E$-valued random elements,	see	\textit{e.g.}	\cite{BD00}.\\

Now,	let us introduce some notations to formulate LDP.
\begin{align*}
	&\text{	For	}  N \in \mathbb{N}:	\quad	S_N= \Big\{\phi \in L^2(0,T;H_0); \quad \int_0^T\Vert \phi(s)\Vert_{H_0}^2ds \leq N\Big\},\\	
	&\mathcal{A}=\Big\{v: \quad v \text{ is }  H_0\text{-valued }\text{ predictable process and } P\big( \int_0^T\Vert v(s)\Vert_{H_0}^2ds < \infty \big)=1\Big\}.
\end{align*}

Recall	that	 the	set $S_N$ endowed with the weak topology is a Polish space	and	define
\begin{align}\label{An-LDP-2}
\mathcal{A}_N=\{ v\in \mathcal{A}: v(\omega)\in S_N \text{ P-a.s.}\}.
\end{align}

Let $g^\delta:C([0,T],H) \to E$ be a measurable map such that $g^\delta(W(\cdot))=X^\delta.$

\subsubsection*{\underline{Assumption	(H)}}
Suppose	that	 there	exists a  measurable map $g^0:C([0,T];H) \to E$ such that	
\begin{enumerate}
	\item	For	every	$N<\infty$,	any	family	$\{ v^\delta: \delta>0\} \subset \mathcal{A}_N $	and	any	$\epsilon>0$,
	$$\lim_{\delta\to 0}P(\rho(Y^\delta,Z^\delta)>\epsilon)=0,$$
	where	$Y^\delta=	g^\delta(W(\cdot)+\dfrac{1}{\delta}\int_0^\cdot v^\delta_sds),	Z^\delta=g^0(\int_0^\cdot v_s^\delta	ds)$	and	$\rho(\cdot,\cdot)$	stands	for	the	metric	in	the	space	$E$.
	\item For	every	$N<\infty$	and	any	family $\{ v^\delta: \delta>0\} \subset S_N $	satisfying	that	$v^\delta$	converges	weakly	to	some	element	$v$	as	$\delta\to	0$,	$g^0(\int_0^\cdot v_s^\delta	ds)$	converges	to	$g^0(\int_0^\cdot v_s	ds)$	in	the	space	$E$.
\end{enumerate}

Let	us	recall	the	following	result	from	\cite{Matou21},	which	will	be	used	to	prove	Theorem	\ref{Main-Thm-LDP}.
\begin{theorem}\label{LDP-classic-result}	\cite[Theorem	3.2]{Matou21}	
	If  $X^\delta=g^\delta(W(\cdot))$ and the Assumption (H) holds, then the family $\{X^\delta\}_{\delta>0}$ satisfies the  Large deviation principle on $E$ with the  rate function $I$ given by
	\begin{equation}\label{rate-fct}
		I(f)=\displaystyle\inf_{\{\phi \in L^2(0,T;H_0); f=g^0(\int_0^\cdot\phi_sds)\}}\big\{\dfrac{1}{2}\int_0^T\Vert \phi(s)\Vert_{H_0}^2ds\big\},
	\end{equation}
	with	the	convention	$\inf\{\emptyset\}=\infty.$
	\end{theorem}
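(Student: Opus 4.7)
The plan is to prove the Laplace principle for $\{X^\delta\}$ on $E$ with the given rate function $I$, since the Laplace principle is equivalent to the LDP on a Polish space. The cornerstone will be the Budhiraja--Dupuis variational representation for exponential functionals of a cylindrical/$Q$-Wiener process: for any bounded measurable $h : E \to \mathbb{R}$,
\begin{equation*}
-\delta^2 \log \mathbb{E}\Bigl[\exp\bigl(-h(X^\delta)/\delta^2\bigr)\Bigr]
= \inf_{v \in \mathcal{A}} \mathbb{E}\left[ \tfrac{1}{2}\int_0^T \|v_s\|_{H_0}^2\,ds + h\Bigl(g^\delta\bigl(W(\cdot) + \tfrac{1}{\delta}\int_0^\cdot v_s\,ds\bigr)\Bigr)\right].
\end{equation*}
Thus the Laplace principle reduces to analysing the right-hand side as $\delta \to 0$ along near-optimal controls $v^\delta$, which standard arguments localise in some $\mathcal{A}_N$.

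First I would verify that $I$ is a genuine rate function. Given a level set $\{I \le M\}$, each element $f$ admits a representer $\phi_f \in L^2(0,T;H_0)$ with $\|\phi_f\|^2 \le 2M$ and $f = g^0(\int_0^\cdot \phi_f\,ds)$. Since balls in $L^2(0,T;H_0)$ are weakly compact, one extracts from any sequence in $\{I\le M\}$ a weak limit of representers and then uses part (2) of Assumption (H) to pass to the limit in $g^0$, obtaining sequential compactness in $E$; combined with lower semicontinuity (itself a consequence of (H)(2)), this gives the compact level sets.

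Next, the Laplace upper bound. Fix bounded continuous $h$ and pick $v^\delta \in \mathcal{A}_N$ achieving the infimum in the variational representation up to $\delta$. The cost term $\tfrac{1}{2}\mathbb{E}\int \|v^\delta_s\|^2 ds$ is bounded, so along a subsequence $v^\delta \to v$ weakly in $S_N$ (a.s., after passing to Skorokhod/tightness arguments on $S_N$ endowed with its weak topology). By (H)(1), $Y^\delta = g^\delta(W + \tfrac{1}{\delta}\int v^\delta ds)$ stays close in probability to $Z^\delta = g^0(\int v^\delta ds)$, and by (H)(2), $Z^\delta \to g^0(\int v\,ds)$ in $E$; Fatou combined with the boundedness of $h$ then yields
$$\liminf_{\delta \to 0} \bigl(-\delta^2 \log \mathbb{E}[e^{-h(X^\delta)/\delta^2}]\bigr) \ge \inf_{\phi}\Bigl\{\tfrac{1}{2}\|\phi\|^2_{L^2} + h(g^0(\textstyle\int \phi\,ds))\Bigr\} = \inf_{x \in E}\{h(x) + I(x)\}.$$

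For the lower bound, I would fix $\varepsilon > 0$ and pick a near-optimal deterministic $\phi \in L^2(0,T;H_0)$ for $\inf_x\{h(x)+I(x)\}$, with $\|\phi\|^2 \le 2N$ for some $N$. Plugging the constant control $v^\delta \equiv \phi$ into the variational formula and using (H)(1) applied to this (deterministic, hence predictable) $v^\delta$ together with the continuity statement (H)(2) shows the reverse inequality. The main obstacle, and what Assumption (H) is precisely designed to surmount, is controlling the discrepancy between the shifted law of the Wiener process (where the cost functional lives) and the skeleton controlled dynamics governed by $g^0$; once this passage is justified on $\mathcal{A}_N$, the boundedness of $h$ and standard truncation arguments finish the proof.
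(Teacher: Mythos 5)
The paper does not prove this theorem at all: it is quoted verbatim from \cite[Theorem 3.2]{Matou21} and invoked as a black box, so there is no in-paper argument against which to compare. Your sketch reconstructs the standard weak-convergence proof via the Budhiraja--Dupuis variational representation, which is how the cited reference proceeds, and the overall structure is sound: equivalence of LDP and Laplace principle on a Polish space, the variational formula, compactness of level sets via weak compactness of $S_N$ together with Assumption (H)(2), a near-optimal random-control argument for one direction, and a fixed deterministic control for the other.

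Two small points to fix. First, your labels for the two halves of the Laplace principle are swapped: the inequality $\liminf_{\delta\to 0}\bigl(-\delta^2\log\E[e^{-h(X^\delta)/\delta^2}]\bigr)\ge\inf_E\{h+I\}$, which you derive from near-optimal random controls, Skorokhod, and (H)(1)--(2), is the Laplace \emph{lower} bound (it corresponds to the LDP upper bound on closed sets), while the inequality proved by inserting a fixed near-optimal $\phi$ is the Laplace \emph{upper} bound. Second, in that latter direction (H)(2) is not actually needed: since $v^\delta\equiv\phi$ does not depend on $\delta$, one has $Z^\delta\equiv g^0(\int_0^\cdot\phi\,ds)$ and condition (1) of Assumption (H) alone gives $g^\delta(W+\tfrac{1}{\delta}\int_0^\cdot\phi\,ds)\to g^0(\int_0^\cdot\phi\,ds)$ in probability, which together with boundedness and continuity of $h$ closes the argument. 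You should also be explicit that the localisation of near-optimal controls to $\mathcal{A}_N$ (which you describe as a ``standard truncation'') uses the boundedness of $h$; this is the one place where handwaving could hide a genuine issue, though it is indeed standard.
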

\section{Formulation	of	the	problem	and	main	results}\label{sec-formulation}
\subsection{Obstacle problem with small noise}

Let	$\delta>0$,	we are concerned with the small noise large deviation principle	(LDP)
of the following	 problem:
\begin{align}\label{I-LDP}
\begin{cases}
du_\delta+A(u_\delta,\cdot)ds+ k_\delta ds= fds+\delta G(\cdot, u_\delta)dW \quad & \text{in} \quad D\times  \Omega_T, \\ 
u_\delta(t=0)=u_0 & \text{in} \quad D,\\ 
u_\delta \geq \psi & \text{in} \quad  D\times  \Omega_T, \\  
u_\delta=  0   &\text{on} \quad\partial D\times \Omega_T,  \\
\langle k_\delta,u_\delta-\psi\rangle_{V^\prime,V}  = 0 \hspace*{0.2cm} \text{and} \hspace*{0.2cm} -k_\delta \in	(V^\prime)^+  & \text{a.e.	in} \quad  \Omega_T.
\end{cases}
\end{align}

Thanks to Theorem \ref{TH1},  there exists  a unique solution  $X_\delta:=(u_\delta,k_\delta)$ to Problem \eqref{I-LDP} in the sense of Definition \ref{def}. In particular, $\{X_\delta\}_\delta$ takes values in $$(C([0,T];H)\cap L^p(0,T;V))\times L^{p^\prime }(0,T;V^\prime) \quad P-a.s.$$
It's well-known that $(C([0,T];H)\cap L^p(0,T;V),\vert \cdot\vert_T)$ is a Polish space with the following norm
$$ \vert f-g\vert_T=\displaystyle\sup_{t\in[0,T]}\Vert f-g\Vert_H+\Big(\int_0^T \Vert f-g\Vert_V^pds\Big)^{1/p}. $$
The existence of a unique strong solution of the obstacle problem	\eqref{I-LDP}	ensures	the	existence	 of	a Borel-measurable map	(see  \textit{e.g.	}\cite[Thm. 2.1 ]{RoK08}) 
\begin{align}\label{def-g-delta}
	g^\delta:C([0,T];H)\to C([0,T];H)\cap L^p(0,T;V)
\end{align}
such that $u_\delta=g^\delta(W)$ P-a.s.  
	In	order	to	state	the	main	results,	let	us	introduce		the skeleton
	equation associated to	\eqref{I-LDP}.	 	Let $\phi \in L^2(0,T;H_0)$,	we	are	looking	for	$(y^\phi,R^\phi)$	solution	to	 \begin{align}\label{Skeleton}
	\begin{cases}
	\dfrac{dy^\phi}{dt}+A(y^\phi,\cdot)+ R^\phi= f+G(\cdot, y^\phi)\phi    \quad  \text{ in } V^\prime, y^\phi(0)=u_0 \text{ in } H ,\\ 
	y^\phi \geq \psi  \text{ a.e. in } \quad  D\times [0,T], \quad -R^\phi \in (V^\prime)^+: 
	\langle R^\phi,y^\phi-\psi\rangle_{V^\prime,V}  = 0	\text{ a.e. in }	[0,T]. \quad
	\end{cases}
	\end{align}

	\begin{definition}\label{Def-Skelton} 
				The pair  $(y^\phi,R^\phi)$ is a solution to \eqref{Skeleton} if	and	only	if:
		\begin{itemize}
			\item[$\bullet$]  $(y^\phi,R^\phi) \in (L^p(0,T;V)\cap C([0,T];H))\times L^{p^\prime}(0,T;V^\prime) $, 
			$y^\phi(0)=u_0 $ and $ y^\phi \geq \psi .$
			\item[$\bullet$]  $-R^\phi	\in(L^{p^\prime}(0,T;V^\prime))^+$	and		$ \displaystyle\int_0^T\langle R^\phi,y^\phi-\psi\rangle ds  = 0$ . 
			\item[$\bullet$] For all $t\in[0,T]$, 
			$$(y^\phi(t),\Phi)+\displaystyle\int_0^t\langle R^\phi,\Phi\rangle ds+\displaystyle\int_0^t \langle A(y^\phi,\cdot), \Phi\rangle ds=(u_0,\Phi)+\int_0^t(f+G(\cdot, y^\phi)\phi, \Phi) ds,\quad\forall	\Phi\in	V.$$ 
		\end{itemize}
				\end{definition}
			\begin{remark}	If	$(y^\phi,R^\phi)$	is	a	solution	to	\eqref{Skeleton}	in	the	sense	of	Definition	\ref{Def-Skelton}.	Then,	$y^\phi$	satisfies	the	following	variational	inequality:		for	any	$v \in L^p(0,T;V)$	such	that	$v\geq \psi$
				\begin{align*}
				\int_0^T\langle	\dfrac{	dy^\phi}{ds}+A(\cdot,y^\phi)-G(\cdot, y^\phi)\phi-f,v-y^\phi\rangle	ds\geq	0.
				\end{align*}
			\end{remark}
			
			\begin{prop}\label{Prop-Skeleton}
				Under Assumptions H$_1$-H$_6$, there exists  a unique solution $(y^\phi,R^\phi)$ to  \eqref{Skeleton} in the sense of Definition \ref{Def-Skelton}.	Moreover,	the following  Lewy-Stampacchia inequality holds: 
				\begin{align}\label{LS-Skeleton}
				0\leq  \dfrac{	dy^\phi}{ds}+ A(y^\phi,\cdot)- G(y^\phi,\cdot)\phi-f \leq h^-=  \left(f - \partial_t  \psi  - A(\psi,\cdot)\right) ^-.
				\end{align}
			\end{prop}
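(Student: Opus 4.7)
The plan is to establish Proposition 3.1 by adapting the penalization scheme developed for the stochastic obstacle problem in \cite{YV22} to the deterministic skeleton equation, treating the state-dependent forcing $G(y^\phi,\cdot)\phi$ as a nonlinear perturbation of the source.

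\textbf{Step 1 (Penalization).} For $\epsilon>0$, introduce the auxiliary problem
\begin{equation*}
\frac{dy_\epsilon}{dt}+A(y_\epsilon,\cdot)-\tfrac{1}{\epsilon}(y_\epsilon-\psi)^{-}=f+G(y_\epsilon,\cdot)\phi,\qquad y_\epsilon(0)=u_0.
\end{equation*}
Existence and uniqueness of $y_\epsilon\in L^p(0,T;V)\cap C([0,T];H)$ follow from classical monotone-operator theory (H$_{2,1}$--H$_{2,4}$ after the shift $\lambda_T\mathrm{Id}+A$) combined with a Picard iteration on the $G$-term: the Lipschitz property H$_{3,1}$ and the estimate $\|G(z,t)\phi(t)\|_H\le\sqrt{L(1+\|z\|_H^2)}\|\phi(t)\|_{H_0}$ with $\phi\in L^2(0,T;H_0)$ allow a standard contraction argument on small time intervals, extended globally by the a priori bounds of Step 2.

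\textbf{Step 2 (Uniform estimates).} Testing the penalized equation with $y_\epsilon$ and invoking H$_{2,1}$, H$_{3,2}$, Young's inequality and Gronwall yields bounds on $y_\epsilon$ in $L^\infty(0,T;H)\cap L^p(0,T;V)$ uniform in $\epsilon$. The equation itself then gives $y_\epsilon'-\tfrac{1}{\epsilon}(y_\epsilon-\psi)^{-}$ bounded in $L^{p'}(0,T;V')$, which combined with the Lewy--Stampacchia bound of Step 3 gives both terms bounded separately.

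\textbf{Step 3 (Lewy--Stampacchia at the penalized level).} This is the heart of the proof. Setting $w_\epsilon=y_\epsilon-\psi$ and using $h=f-\partial_t\psi-A(\psi,\cdot)$, the penalized equation rewrites as
\begin{equation*}
w_\epsilon'+[A(y_\epsilon,\cdot)-A(\psi,\cdot)]-G(y_\epsilon,\cdot)\phi=h+\tfrac{1}{\epsilon}w_\epsilon^{-}.
\end{equation*}
The aim is to show $0\le\tfrac{1}{\epsilon}w_\epsilon^{-}\le h^{-}$ in $L^{p'}(0,T;V')$. Following the strategy of \cite{GMYV,YV22}, one tests against $\bigl(\tfrac{1}{\epsilon}w_\epsilon^{-}-h^{-}\bigr)^{+}$ (or a suitable regularization thereof to stay in $V$), exploits the T-monotonicity H$_{2,2}$ applied to the pair $(\psi,y_\epsilon)$, the sign of $h^{+}$, and crucially the assumption $G(\psi,\cdot)=0$ in H$_4$, so that $G(y_\epsilon,\cdot)\phi=[G(y_\epsilon,\cdot)-G(\psi,\cdot)]\phi$ can be absorbed via H$_{3,1}$ on the support of the test function (where $y_\epsilon<\psi$). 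Gronwall closes the estimate.

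\textbf{Step 4 (Passage to the limit).} The uniform bounds yield, along a subsequence, $y_\epsilon\rightharpoonup y^\phi$ in $L^p(0,T;V)$, weakly-$\ast$ in $L^\infty(0,T;H)$, and $-\tfrac{1}{\epsilon}w_\epsilon^{-}\rightharpoonup R^\phi$ in $L^{p'}(0,T;V')$ with $-R^\phi\in (L^{p'}(0,T;V'))^{+}$ and $-R^\phi\le h^{-}$ inherited from Step~3. The compact embedding $V\hookrightarrow H$ and Aubin--Lions give strong convergence $y_\epsilon\to y^\phi$ in $L^p(0,T;H)$, hence $G(y_\epsilon,\cdot)\phi\to G(y^\phi,\cdot)\phi$ strongly in $L^1(0,T;H)$ by H$_{3,1}$. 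The obstacle $y^\phi\ge\psi$ passes to the limit because $\|w_\epsilon^{-}\|_{L^2(0,T;H)}^2\le C\epsilon$. The nonlinear operator limit $A(y_\epsilon,\cdot)\rightharpoonup A(y^\phi,\cdot)$ is identified by the Minty--Browder trick using monotonicity of $\lambda_T\mathrm{Id}+A$ from H$_{2,2}$. Finally, the complementarity $\int_0^T\langle R^\phi,y^\phi-\psi\rangle\,ds=0$ is recovered by passing to the limit in $\int_0^T\tfrac{1}{\epsilon}w_\epsilon^{-}\,w_\epsilon\,ds\le 0$ together with $y^\phi\ge\psi$, yielding the Lewy--Stampacchia inequality \eqref{LS-Skeleton} directly.

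\textbf{Step 5 (Uniqueness).} Given two solutions $(y_i^\phi,R_i^\phi)$, subtracting and testing against $y_1^\phi-y_2^\phi$, the cross term $\langle R_1^\phi-R_2^\phi,y_1^\phi-y_2^\phi\rangle\ge 0$ by signs and the complementarity condition; T-monotonicity H$_{2,2}$ and the Lipschitz bound H$_{3,1}$ then give $\tfrac{d}{dt}\|y_1^\phi-y_2^\phi\|_H^2\le C(\phi)\|y_1^\phi-y_2^\phi\|_H^2$ with $C(\phi)\in L^1(0,T)$, and Gronwall concludes.

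The main obstacle is Step~3: establishing the Lewy--Stampacchia bound uniformly in $\epsilon$ in the presence of the state-dependent, possibly unbounded (in time) term $G(y_\epsilon,\cdot)\phi$. Without the vanishing condition $G(\psi,\cdot)=0$ in H$_4$, one could not control the noise contribution when the solution touches the obstacle, and the comparison with $h^{-}$ would fail; every subsequent identification in Step~4 hinges on this bound.
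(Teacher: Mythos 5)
Your overall blueprint (penalize, estimate, pass to the limit, derive Lewy--Stampacchia, conclude uniqueness) matches the paper's, but two of your technical choices deviate from the paper's in ways that leave genuine gaps.

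First, your penalized problem uses the plain penalty $\tfrac{1}{\epsilon}(y_\epsilon-\psi)^-$ and the \emph{unmodified} diffusion $G(y_\epsilon,\cdot)\phi$. The paper instead uses the penalty $\tfrac{1}{\epsilon}[(y_\epsilon-\psi)^-]^{\tilde q-1}$ with $\tilde q=\min(2,p)$, and replaces $G$ by $\widetilde G(y_\epsilon,\cdot)=G(\max(y_\epsilon,\psi),\cdot)$. The exponent $\tilde q-1$ is needed so that, for $\max(1,\tfrac{2d}{d+2})<p<2$, the penalty term can be bounded in $L^{\tilde q'}([0,T]\times D)=L^{p'}([0,T]\times D)$, which is the space in which it must converge to $R^\phi$; your first-power penalty only gives an $L^2$ bound, which is too weak when $p'>2$. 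More importantly, the modification of $G$ is not cosmetic: in the key estimate obtained by testing with $-(y_\epsilon-\psi)^-$, the paper exploits the identity $(\widetilde G(y_\epsilon,\cdot)\phi,-(y_\epsilon-\psi)^-)=(G(\psi,\cdot)\phi,-(y_\epsilon-\psi)^-)=0$, which holds because $\widetilde G(y_\epsilon,\cdot)$ coincides with $G(\psi,\cdot)$ where $y_\epsilon\le\psi$, and $G(\psi,\cdot)=0$ by $H_4$. Your proposed replacement --- ``absorb $[G(y_\epsilon,\cdot)-G(\psi,\cdot)]\phi$ via $H_{3,1}$ on the support of the test function'' --- does not go through: $H_{3,1}$ gives $\|G(y_\epsilon,t)-G(\psi,t)\|_{L_2(H_0,H)}\le M^{1/2}\|y_\epsilon-\psi\|_H$, and $\|y_\epsilon-\psi\|_H$ contains the contribution of the set $\{y_\epsilon>\psi\}$, which is neither small nor controlled by $\|(y_\epsilon-\psi)^-\|_H$; so the cross term cannot be dominated by quantities that vanish with $\epsilon$ or that Gr\"onwall can absorb on the left-hand side. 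The paper's $\widetilde G$ trick is precisely what makes this term vanish exactly.

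Second, your Step~3 claims the pointwise comparison $\tfrac{1}{\epsilon}w_\epsilon^-\le h^-$ at the penalized level, to be established by testing against $\big(\tfrac{1}{\epsilon}w_\epsilon^--h^-\big)^+$. That test function does not pair well with the time derivative $\langle w_\epsilon',\cdot\rangle$ (there is no telescoping identity), and in any case $h^-$ is only in $L^{p'}(0,T;V')$ under $H_5$, so the test function need not lie in $L^p(0,T;V)$. The paper does not prove the Lewy--Stampacchia comparison at the penalized level; it only proves a uniform $L^{\tilde q'}$ bound on the penalty (Lemma \ref{lem2-LDP}), passes to the limit, and then obtains the inequality for the limit $y$ by introducing an auxiliary obstacle problem with constraint set $K_1=\{v\le y\}$ and source $f+h^-$, showing its solution coincides with $y$. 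Finally, the paper needs two further approximation steps you omit: first replace $\phi\in L^2(0,T;H_0)$ by truncations in $L^\infty(0,T;H_0)$, and separately approximate a general $h^-\in(L^{p'}(0,T;V'))^+$ by regular $h_n\in L^{\tilde q'}([0,T]\times D)$; without these, the penalization scheme and the auxiliary-problem argument are not available for the assumed data. You should revise Step~3 around the modified diffusion and the auxiliary-problem route, and add the two regularization/removal steps.
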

		\begin{proof}
		The	proof	of	Proposition	\ref{Prop-Skeleton}	will	be	given	in	Section	\ref{Sec-Skeleton}.		\end{proof}
		Now,	let	us		
				define	the		measurable	mapping	$g^0$	as	follows
					\begin{align}\label{g0-def}
			g^0:	C([0,T];H)	&\to	L^p(0,T;V)\cap	C([0,T];H)\notag\\
		\int_0^\cdot\phi	ds&\mapsto	g^0(\int_0^\cdot\phi	ds):=y^\phi\quad	\text{ for		}	\phi \in L^2(0,T;H_0),			\end{align}
			where	$y^\phi\in	L^p(0,T;V)\cap	C([0,T];H)$	is	the	unique	solution	of	\eqref{Skeleton}.	Now,
			
			introduce	the	following	rate	function
			\begin{align}\label{rate-fctn}
				I(y)=\displaystyle\inf_{\{\phi \in L^2(0,T;H_0) \}}\big\{\dfrac{1}{2}\int_0^T\Vert \phi(s)\Vert_{H_0}^2ds;\quad	y:=y^\phi=g^0(\int_0^\cdot\phi	ds)\big\}.
			\end{align}
			The main result is given as follows.
\begin{theorem}\label{Main-Thm-LDP}
	Assume $H_1$-$H_6$ hold.	 Let $\{u^\delta\}_{\delta>0}$ be the	unique	 solution of	\eqref{I-LDP}. Then 
	\begin{enumerate}
	\item		  $\{u^\delta\}_\delta$ satisfies LDP on $C([0,T];H)$,	as $\delta \to 0$, with the rate function $I$ given by  \eqref{rate-fctn}. 
	\item		If moreover $H_{7}$ holds. Then $\{u^\delta\}_\delta$ satisfies LDP on $C([0,T];H)\cap L^p(0,T;V)$,	as $\delta \to 0$, with the rate function $I$ given by  \eqref{rate-fctn}. 
	\end{enumerate}
\end{theorem}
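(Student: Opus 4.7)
The plan is to apply Theorem \ref{LDP-classic-result} with $E=C([0,T];H)$ for part (1) and $E=C([0,T];H)\cap L^p(0,T;V)$ for part (2), using the maps $g^\delta$, $g^0$ of \eqref{def-g-delta}--\eqref{g0-def} and the rate function \eqref{rate-fctn}. It therefore suffices to verify the two items of Assumption (H); Proposition \ref{Prop-Skeleton} already supplies well-posedness of the skeleton equation together with the Lewy-Stampacchia estimate \eqref{LS-Skeleton}, which will play a decisive role by providing a $\delta$-uniform $L^{p'}(0,T;V')$ control on the reflection measures.

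For item (2), fix $N$ and a sequence $(v^\delta)\subset S_N$ with $v^\delta\rightharpoonup v$, and write $y^\delta:=y^{v^\delta}$, $R^\delta:=R^{v^\delta}$. Testing \eqref{Skeleton} against $y^\delta$ and invoking $H_{2,1}$, $H_{2,3}$, $H_{3,1}$--$H_{3,2}$ together with $\int_0^T\norm{v^\delta(s)}_{H_0}^2\,ds\le N$ yields a uniform bound on $y^\delta$ in $L^\infty(0,T;H)\cap L^p(0,T;V)$. The Lewy-Stampacchia inequality \eqref{LS-Skeleton} bounds $R^\delta$ in $L^{p'}(0,T;V')$ by $\norm{h^-}_{L^{p'}(0,T;V')}$ independently of $\delta$, and via $H_{2,3}$ this also controls $\partial_t y^\delta$. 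Aubin-Lions then extracts a subsequence with $y^\delta\to y$ strongly in $C([0,T];H)$ and weakly in $L^p(0,T;V)$, with $A(y^\delta,\cdot)\rightharpoonup\chi$, $R^\delta\rightharpoonup R$ weakly in $L^{p'}(0,T;V')$; the constraint $y\ge\psi$ passes to the limit, and the Lipschitz $H_{3,1}$ combined with the strong $C([0,T];H)$ convergence gives $G(y^\delta,\cdot)v^\delta\rightharpoonup G(y,\cdot)v$ in $L^2(0,T;H)$. To identify $\chi=A(y,\cdot)$, I would apply Minty-Browder: the complementarity $\int_0^T\langle R^\delta,y^\delta-\psi\rangle\,ds=0$ together with $-R^\delta\ge 0$, $y-\psi\ge 0$ give
\begin{equation*}
-\int_0^T\langle R^\delta,y^\delta-y\rangle\,ds=\int_0^T\langle R^\delta,y-\psi\rangle\,ds\le 0,
\end{equation*}
so the chain rule for $\tfrac{1}{2}\norm{y^\delta(T)}_H^2$ and weak lower semicontinuity yield $\limsup\int_0^T\langle A(y^\delta,\cdot),y^\delta-y\rangle\,ds\le 0$; the hemicontinuity $H_{2,4}$ and monotonicity of $\lambda_T\mathrm{Id}+A$ (from $H_{2,2}$) then identify $\chi=A(y,\cdot)$. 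The pair $(y,R)$ solves \eqref{Skeleton}, so uniqueness forces $y=y^v$ along the full sequence. Under $H_7$, testing the difference equation and using strong monotonicity upgrades this to convergence in $L^p(0,T;V)$.

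For item (1), set $Y^\delta:=g^\delta\bigl(W+\tfrac{1}{\delta}\int_0^\cdot v^\delta ds\bigr)$ and $Z^\delta:=g^0\bigl(\int_0^\cdot v^\delta ds\bigr)=y^{v^\delta}$. By uniqueness in Theorem \ref{TH1}, $Y^\delta$ solves the shifted obstacle problem with extra drift $G(Y^\delta,\cdot)v^\delta\,ds$ and noise $\delta G(Y^\delta,\cdot)\,dW$, with multiplier $\kappa^\delta\le 0$ obeying $\langle\kappa^\delta,Y^\delta-\psi\rangle=0$ in the integrated sense. Applying It\^o's formula to $\norm{Y^\delta-Z^\delta}_H^2$, the reflection cross-terms satisfy
\begin{equation*}
-\int_0^T\langle\kappa^\delta-R^{v^\delta},Y^\delta-Z^\delta\rangle\,ds=\int_0^T\langle\kappa^\delta,Z^\delta-\psi\rangle\,ds+\int_0^T\langle R^{v^\delta},Y^\delta-\psi\rangle\,ds\le 0,
\end{equation*}
by the signs of the multipliers against $Z^\delta-\psi\ge 0$ and $Y^\delta-\psi\ge 0$. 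The $A$-difference is absorbed up to $\lambda_T\norm{Y^\delta-Z^\delta}_H^2$ by the monotonicity of $\lambda_T\mathrm{Id}+A$; the drift $(G(Y^\delta,\cdot)-G(Z^\delta,\cdot))v^\delta$ is controlled by $H_{3,1}$ weighted by $\norm{v^\delta(s)}_{H_0}^2$; the quadratic variation contributes $\delta^2 L(1+\norm{Y^\delta}_H^2)$ via $H_{3,2}$. Burkholder-Davis-Gundy on the stochastic integral and Gronwall (using $\int_0^T\norm{v^\delta}_{H_0}^2 ds\le N$) then yield $\E\sup_{t\in[0,T]}\norm{Y^\delta-Z^\delta}_H^2\le C(N,T)\delta^2\to 0$, giving convergence in probability in $C([0,T];H)$. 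Under $H_7$, the coercive term $\bar{\alpha}\norm{Y^\delta-Z^\delta}_V^p$ appearing in the same It\^o computation promotes this to convergence in probability in $L^p(0,T;V)$.

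The main obstacle is the identification of $\chi=A(y,\cdot)$ in item (2): since $A$ is only T-monotone and the multipliers $R^\delta$ depend nonlinearly on the controls, the Minty step requires the reflection cross-term to have a favourable sign together with enough compactness for $(y^\delta)$. The Lewy-Stampacchia inequality \eqref{LS-Skeleton} is exactly what makes both features work, as it supplies the $\delta$-uniform $L^{p'}(0,T;V')$ estimate on $R^\delta$ needed for Aubin-Lions, and its interplay with the complementarity $\int_0^T\langle R^\delta,y^\delta-\psi\rangle\,ds=0$ makes the reflection cross-term nonpositive; the same structural observation is what closes the It\^o argument in item (1) despite the singular reflection.
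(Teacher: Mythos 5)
Your plan is structurally the same as the paper's: verify the two items of Assumption (H) via Lemma \ref{LDP-cd2} (continuity of the skeleton map) and Lemma \ref{LDP-cd1} (vanishing noise), with the Lewy-Stampacchia inequality \eqref{LS-Skeleton} supplying the uniform $L^{p'}(0,T;V')$ control on the reflection measures and the complementarity conditions delivering the favourable sign in both the Minty passage and the It\^o estimate. The Girsanov/BDG/Gr\"onwall argument you sketch for item (1) is sound (the paper obtains a rate $\delta$ rather than your $\delta^2$, but either suffices for the conclusion).

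There is, however, a gap in your item (2) argument: you assert that Aubin--Lions already yields a subsequence with $y^\delta\to y$ strongly in $C([0,T];H)$. With $(y^\delta)$ bounded only in $L^p(0,T;V)\cap L^\infty(0,T;H)$ and $\partial_t y^\delta$ bounded in $L^r(0,T;V')$, Simon's compactness \cite[Corollary 6]{Simon} gives precompactness in $L^q(0,T;H)$ for finite $q$, not in $C([0,T];H)$ --- for the latter one would need an $L^\infty$-in-time bound in a space compactly embedded in $H$, which is not available here. The paper extracts strong convergence in $L^2(0,T;H)$, identifies the limit by Minty's trick, and \emph{only then} obtains $y_n\to y$ in $C([0,T];H)$ from the energy estimate for $y_n-y$: since $\int_0^t\langle R_n-R,\,y_n-y\rangle\,ds\geq 0$ and $\lambda_T\,\mathrm{Id}+A$ is monotone, one has
\begin{equation*}
\sup_{t\in[0,T]}\|(y_n-y)(t)\|_H^2 \;\leq\; 2\lambda_T\int_0^T\|(y_n-y)(s)\|_H^2\,ds \;+\; 2\sup_{t\in[0,T]}|\mathcal{I}(t)| \;\longrightarrow\; 0,
\end{equation*}
with both right-hand terms vanishing by the strong $L^2(0,T;H)$ convergence. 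You do invoke this very energy estimate to upgrade to $L^p(0,T;V)$ under $H_7$, but you also need it (with T-monotonicity only) to obtain the $C([0,T];H)$ conclusion in the first place; attributing that strong convergence to Aubin--Lions alone does not work.
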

\begin{proof}
Theorem	\ref{Main-Thm-LDP}	is	a	consequence	of	Lemma	\ref{LDP-cd2}	and	Lemma	\ref{LDP-cd1},	see	Section	\ref{section-LDP-proof}.
\end{proof}
\subsubsection*{Example}
For	$1<p<\infty$,	consider	the	p-Laplace	operator		given	by
$$	B(u)=-\text{div}[\vert	\nabla	u\vert^{p-2}\nabla	u],\quad	u	\in	W^{1,p}_0(D)=V.$$
The	unique	solution	$(u_\delta)_{\delta>0}$	of	\eqref{I-LDP},	with	$A=B$	and	$f,\psi,u_0$	and	$G$	satisfying	$H_3$-$H_6$,	satisfies	LDP	in	$C([0,T];H)\cap L^p(0,T;V)$	when	$p\geq2$	and	satisfies		LDP			in		$C([0,T];H)$		when	$\max(1,\frac{2d}{d+2})<p<2$.
\section{Well-posedness	of	skeleton Equations	\eqref{Skeleton}}\label{Sec-Skeleton}	Let	us start with the following result,	which ensures  the uniqueness  of the solution   to \eqref{Skeleton}.
\begin{lemma}\label{uniq-Ske}
Let $(y_1,R_1)$ and $(y_2,R_2)$ be  two  solutions to  \eqref{Skeleton}  in the sense of Definition \ref{Def-Skelton} associated  with  $(f_1,\phi_1)$	and	$(f_2,\phi_2)$. Then, there exists a positive constant $C>0$ such that 
	$$ \displaystyle\sup_{t\in[0,T]}\Vert (y_1-y_2)(t)\Vert_H^2 \leq  C\Vert f_1-f_2\Vert_{L^{p^\prime}(0,T;V^\prime)}\Vert y_1-y_2\Vert_{L^p(0,T;V)}+\int_0^T\Vert	\phi_1-\phi_2\Vert_{H_0}^2ds .$$
	Moreover,	if	$H_7$	holds	then
	\begin{align*}
\displaystyle\sup_{t\in[0,T]}\Vert (y_1-y_2)(t)\Vert_H^2&+\int_0^T	\Vert (y_1-y_2)(s)\Vert_V^pds \\&\quad\leq  C\Vert f_1-f_2\Vert_{L^{p^\prime}(0,T;V^\prime)}\Vert y_1-y_2\Vert_{L^p(0,T;V)}+C\int_0^T\Vert	\phi_1-\phi_2\Vert_{H_0}^2ds ,
	\end{align*}
		where
		$C:=C(M,\phi,G)$	and	$\phi_1,\phi_2\in	L^2([0,T];H_0)$.
\end{lemma}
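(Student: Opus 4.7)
The plan is to subtract the two skeleton equations, test against $y_1-y_2$, and close the estimate by Gronwall, after absorbing the monotone operator, the Lagrange multipliers and the Lipschitz data into favorable terms. Since \eqref{Skeleton} is deterministic and $y_i\in L^p(0,T;V)$ with $\partial_t y_i\in L^{p^\prime}(0,T;V^\prime)$ by the equation, the standard chain rule gives
$$\tfrac12\|(y_1-y_2)(t)\|_H^2 = \int_0^t \langle \partial_s(y_1-y_2),\, y_1-y_2\rangle_{V^\prime,V}\, ds.$$

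The crucial observation is that the obstacle contribution has the favorable sign. Expanding
$$\langle R_1-R_2,\, y_1-y_2\rangle = -\langle R_1,\, y_2-\psi\rangle - \langle R_2,\, y_1-\psi\rangle$$
via the complementarity $\langle R_i, y_i-\psi\rangle=0$, and invoking $-R_i\in (V^\prime)^+$ together with $y_j\geq \psi$, this bracket is non-negative a.e.\ in $[0,T]$, hence can be discarded. The T-monotonicity H$_{2,2}$ (applied to $\lambda_T\mathrm{Id}+A$) yields $\langle A(y_1)-A(y_2),\, y_1-y_2\rangle \geq -\lambda_T\|y_1-y_2\|_H^2$; under H$_7$ one additionally keeps the coercive $\bar\alpha\|y_1-y_2\|_V^p$ on the left. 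The forcing is handled by Hölder in time:
$$2\int_0^T |\langle f_1-f_2,\, y_1-y_2\rangle|\, ds \leq 2\|f_1-f_2\|_{L^{p^\prime}(V^\prime)}\|y_1-y_2\|_{L^p(V)}.$$

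For the control term I would split $G(y_1)\phi_1 - G(y_2)\phi_2 = (G(y_1)-G(y_2))\phi_1 + G(y_2)(\phi_1-\phi_2)$, apply H$_{3,1}$ and Cauchy--Schwarz on the first piece to produce $2M^{1/2}\|\phi_1\|_{H_0}\|y_1-y_2\|_H^2$, and Young's inequality together with the sublinear bound H$_{3,2}$ on the second piece to obtain $\|\phi_1-\phi_2\|_{H_0}^2 + L(1+\|y_2\|_H^2)\|y_1-y_2\|_H^2$. Collecting yields
$$\|(y_1-y_2)(t)\|_H^2 \leq 2\|f_1-f_2\|_{L^{p^\prime}(V^\prime)}\|y_1-y_2\|_{L^p(V)} + \int_0^t\|\phi_1-\phi_2\|_{H_0}^2\, ds + \int_0^t \beta(s)\|(y_1-y_2)(s)\|_H^2\, ds,$$
with $\beta(s) = 2\lambda_T + L(1+\|y_2(s)\|_H^2) + 2M^{1/2}\|\phi_1(s)\|_{H_0}$, which lies in $L^1(0,T)$ because $y_2\in C([0,T];H)$ and $\phi_1\in L^2(0,T;H_0)$. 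Gronwall's lemma closes the first bound after taking a supremum in $t$; under H$_7$ the retained term $\bar\alpha\|y_1-y_2\|_V^p$ on the left-hand side, integrated on $[0,T]$, delivers the second bound.

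The main technical care will be bookkeeping the Young's-inequality constants so that $\|\phi_1-\phi_2\|_{H_0}^2$ appears with a coefficient free of the unknowns, and verifying that the Gronwall weight $\beta$ is integrable (which relies on the a priori $C([0,T];H)$ regularity of the solutions granted by Definition \ref{Def-Skelton}). Once these are in place, the non-negativity of $\langle R_1-R_2,\, y_1-y_2\rangle$ makes the obstacle transparent and the proof reduces to the classical monotone-operator uniqueness scheme.
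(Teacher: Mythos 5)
Your proof is correct and follows essentially the same route as the paper: test the difference equation with $y_1-y_2$, discard the sign-favorable multiplier term via the complementarity conditions, use T-monotonicity (resp.\ H$_7$) for $A$, H\"older for the forcing, and split the control term into a $\phi$-difference piece plus a $G$-Lipschitz piece before Young and Gr\"onwall. The only cosmetic difference is the decomposition of $G(y_1)\phi_1-G(y_2)\phi_2$: the paper writes it as $G(y_1)(\phi_1-\phi_2)+(G(y_1)-G(y_2))\phi_2$ while you use the mirror splitting $(G(y_1)-G(y_2))\phi_1+G(y_2)(\phi_1-\phi_2)$, which is immaterial.
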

\subsubsection*{Proof	of	Lemma	\ref{uniq-Ske}} Let $t\in [0,T]$ and   $(y_1,R_1)$ and $(y_2,R_2)$  are  two  solutions to  \eqref{Skeleton}  in the sense of Definition \ref{Def-Skelton}	with
	$(f_1,\phi_1)$	and	$(f_2,\phi_2)$.
By using $y_1-y_2$ as test function	in	the	equations	satisfied	by	$y_1-y_2$ and integrating in time from $0$ to $t$, one gets 
	\begin{align*}
	\dfrac{1}{2}\Vert (y_1-y_2)(t)\Vert_H^2&+\displaystyle\int_0^t\langle A(y_1,\cdot)-A(y_2,\cdot),y_1-y_2\rangle ds-\int_0^t(G(\cdot, y_1)\phi_1-G(\cdot, y_2)\phi_2, y_1-y_2)ds\\&+\displaystyle\int_0^t\langle  R_1-R_2, y_1-y_2 \rangle ds
	+ \displaystyle\int_0^t\langle  f_1-f_2, y_1-y_2 \rangle ds	\end{align*}
	
	\begin{itemize}
		
		\item[$\bullet$]  Since $y_1,y_2$	are	solutions	to	\eqref{Skeleton}	in	the	sense	of	Definition	\ref{Def-Skelton}, it yields  
		$$\langle  R_1-R_2, y_1-y_2 \rangle =\langle  R_1, y_1-y_2 \rangle +\langle R_2, y_2-y_1\rangle  \geq 0	\quad	\text{a.e. in 	}[0,T].$$
		Therefore 
		$\displaystyle\int_0^t\langle  R_1-R_2, y_1-y_2 \rangle ds=\int_0^t(\langle  R_1, y_1-y_2 \rangle +\langle R_2, y_2-y_1\rangle) ds  \geq 0 .$
		
		\item[$\bullet$] By using $H_3$, we get
		\begin{align*}
		&\int_0^t(G(\cdot, y_1)\phi_1-G(\cdot, y_2)\phi_2, y_1-y_2)ds\\
		 &\leq\int_0^t \Vert G(\cdot, y_1)\Vert_{L_2(H_0,H)} \Vert \phi_1-\phi_2\Vert_{H_0} \Vert y_1-y_2\Vert_Hds\\&\qquad+ \int_0^t \Vert G(\cdot, y_1)-G(\cdot, y_2)\Vert_{L_2(H_0,H)} \Vert \phi_2\Vert_{H_0} \Vert y_1-y_2\Vert_Hds\\
&\leq \displaystyle\int_0^t \Vert \phi_1-\phi_2\Vert_{H_0}^2 ds	+\displaystyle\int_0^t (M+\Vert \phi_2\Vert_{H_0}^2+\Vert G(\cdot, y_1)\Vert_{L_2(H_0,H)}^2)\Vert y_1-y_2\Vert_H^2 ds. 
			\end{align*}
		\item[$\bullet$]  
						From	$H_2$, one	has $\displaystyle\int_0^t\langle A(y_1,\cdot)-A(y_2,\cdot),y_1-y_2\rangle ds \geq -\lambda_T\int_0^t\Vert y_1-y_2\Vert_H^2 ds.$	Moreover,
	 if	 H$_7$	holds, we  have
		$$\displaystyle\int_0^t\langle A(y_1,\cdot)-A(y_2,\cdot),y_1-y_2\rangle ds \geq -\lambda_T\int_0^t\Vert y_1-y_2\Vert_H^2 ds+\int_0^t	\bar{\alpha}\Vert y_1-y_2\Vert^pds.$$
					\item Holder	inequality	gives	$$\displaystyle\sup_{t\in[0,T]}\vert \int_0^t\langle f_1-f_2, y_1-y_2 \rangle ds\vert  \leq \Vert f_1-f_2\Vert_{L^{p^\prime}(0,T;V^\prime)}\Vert y_1-y_2\Vert_{L^p(0,T;V)}.$$
	\end{itemize}

	Therefore,  Gr\"onwall's lemma ensures the existence of $C>0$ such that 
	\begin{align*}
\displaystyle\sup_{t\in[0,T]}\Vert (y_1-y_2)(t)\Vert_H^2&+\int_0^T	\Vert y_1-y_2\Vert^pds	 \\
&\quad\leq C\big(\Vert f_1-f_2\Vert_{L^{p^\prime}(0,T;V^\prime)}\Vert y_1-y_2\Vert_{L^p(0,T;V)}+\int_0^T \Vert \phi_1-\phi_2\Vert_{H_0}^2 ds\big),
	\end{align*} 
	where
	$C:=C(M,\phi,G)<\infty$.\\
	
		We	will	prove	Proposition	\ref{Prop-Skeleton}	in	two	steps.	First,	Subsection	\ref{subs-ske-regul}	is	devoted	to	the	proof	of	Proposition	\ref{Prop-Skeleton}	with		regular		data.	Next,	we	will	get	the	result	in	the	general	setting	by	using	some	approximations	in	Subsection	\ref{subs-sket-gle}.
		\subsection{Well-posedeness	of	\eqref{Skeleton}	with	regular	data}\label{subs-ske-regul}
		First,		consider	the	following	assumptions:	\begin{enumerate}
		\item[$H^*$:]	Assume that $\phi$ in $L^\infty(0,T;H_0)$ and denotes by $C_\phi=\Vert \phi\Vert_{L^\infty(0,T;H_0)}$.
		
		\item[$H^{**}$:] Assume  that $h^-$ is a  non negative element of $L^{\tilde{q}^\prime}(0,T; L^{\tilde{q}^\prime}(D))$	where	$\tilde{q}=\min(2,p)$.
	\end{enumerate}		The	proof	of	following	Theorem	\ref{THm-approx-1}	will	result	from	Subsections	\ref{subs-pen}-\ref{subsection-LS}.
\begin{theorem}\label{THm-approx-1}
	Under Assumptions (H$_1$)-(H$_6$) and assuming moreover  that $ h^-\in L^{\tilde{q}^\prime}(0,T; L^{\tilde{q}^\prime}(D)),	$	and	$H^*$	holds.  There exists  a unique solution $(y,\rho)\in L^p(0,T;V) \times L^{\tilde{q}^\prime}(0,T;L^{\tilde{q}^\prime}( D))$
	such that: 
	\begin{itemize}
		\item[i.] $y \in  C([0,T];H)$,	$y\geq\psi$, $ y(0)=u_0$	and	 $\rho\leq 0$.
		\item[ii.]	$\rho(y-\psi)=0$		a.e.	in	$[0,T]\times	D $	and	
		\begin{align*}
	\forall v \in L^p(0,T;V);	v\geq	\psi: 	\quad \langle \rho,y-v\rangle  \geq 0 \text{	a.e. in	} [0,T]. 		\end{align*}
		\item[iii.] For all $t\in[0,T]$,  
		$$y(t)+\displaystyle\int_0^t\rho	ds+\displaystyle\int_0^t A(y,\cdot)ds=u_0+\displaystyle\int_0^t G(y,\cdot)\phi	ds+\displaystyle\int_0^t fds.$$ 
		\item[iv.]The following  Lewy-Stampacchia inequality holds: $$0\leq  \dfrac{dy}{dt}+ A(y,\cdot)- G(y,\cdot)\phi-f \leq h^-=  \left(f - \partial_t  \psi  - A(\psi,\cdot)\right) ^-.$$
			\end{itemize}
\end{theorem}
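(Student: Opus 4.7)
I would follow the penalization approach hinted at by the section structure. For $\varepsilon>0$, introduce the penalized problem
\begin{equation*}
\frac{dy_\varepsilon}{dt} + A(y_\varepsilon,\cdot) - \tfrac{1}{\varepsilon}(y_\varepsilon-\psi)^- \;=\; f + G(y_\varepsilon,\cdot)\phi,\qquad y_\varepsilon(0)=u_0.
\end{equation*}
The map $v\mapsto -(v-\psi)^-$ is itself monotone (it is a nondecreasing Nemytskii operator), so $v\mapsto A(v,\cdot)-\tfrac{1}{\varepsilon}(v-\psi)^-$ inherits coercivity from $H_{2,1}$, growth from $H_{2,3}$, hemicontinuity from $H_{2,4}$, and monotonicity of $\lambda_T\mathrm{Id}+A$ from $H_{2,2}$. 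The reaction $G(y_\varepsilon,\cdot)\phi$ is $H$-Lipschitz in $y_\varepsilon$ by $H_{3,1}$ and the bound $C_\phi$ from $H^*$. A standard application of the Lions theorem on monotone evolution equations, combined with Gronwall to absorb the Lipschitz reaction, yields a unique $y_\varepsilon\in L^p(0,T;V)\cap C([0,T];H)$ with $dy_\varepsilon/dt\in L^{p'}(0,T;V')$.

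\textbf{Uniform estimates and Lewy--Stampacchia at the penalized level.} Testing against $y_\varepsilon$, using $H_{2,1}$, $H_{3,2}$ and the favourable sign of the penalty, Gronwall delivers $\sup_t\Vert y_\varepsilon\Vert_H^2+\int_0^T\Vert y_\varepsilon\Vert_V^p\,ds\leq C$ uniformly in $\varepsilon$. The central step is the uniform inequality
\begin{equation*}
0\;\leq\;\frac{dy_\varepsilon}{dt}+A(y_\varepsilon,\cdot)-G(y_\varepsilon,\cdot)\phi-f\;=\;\tfrac{1}{\varepsilon}(y_\varepsilon-\psi)^-\;\leq\; h^-.
\end{equation*}
The lower bound is built into the scheme. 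For the upper bound, subtract the equation satisfied by $\psi$, which by $H_5$ reads $\partial_t\psi+A(\psi,\cdot)=f-h^++h^-$, and exploit $G(\psi,\cdot)=0$ from $H_4$ to write $G(y_\varepsilon,\cdot)\phi=(G(y_\varepsilon,\cdot)-G(\psi,\cdot))\phi$. Test the resulting identity against $\big(\tfrac{1}{\varepsilon}(y_\varepsilon-\psi)^- - h^-\big)^+$: on the set where this is strictly positive one has $y_\varepsilon<\psi$, so the T-monotonicity $H_{2,2}$ applied to the pair $(y_\varepsilon,\psi)$ makes the $A$-contribution non-positive, and the $H$-Lipschitz control $H_{3,1}$ closes the $G$-contribution via Gronwall, forcing the test function to vanish. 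This is the main obstacle: one has to arrange the comparison so that the $y$-dependence of $G$ does not spoil the sign, which is precisely why $H_4$ (with $G(\psi,\cdot)=0$) is assumed. Note also that the assumption $h^-\in L^{\tilde q'}(0,T;L^{\tilde q'}(D))$ is used to give this inequality a functional meaning beyond $V'$.

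\textbf{Passage to the limit and identification.} The uniform bounds plus the LS inequality give that $y_\varepsilon$ is bounded in $L^p(0,T;V)\cap L^\infty(0,T;H)$ and that $dy_\varepsilon/dt$ is bounded in $L^{p'}(0,T;V')+L^{\tilde q'}(0,T;L^{\tilde q'}(D))$. The compact Gelfand triple recalled in Section~\ref{Section-notation} and the Aubin--Lions lemma produce a subsequence with $y_\varepsilon\to y$ strongly in $L^p(0,T;H)$, while $\xi_\varepsilon:=-\tfrac{1}{\varepsilon}(y_\varepsilon-\psi)^-\rightharpoonup\rho$ in $L^{\tilde q'}(0,T;L^{\tilde q'}(D))$ with $\rho\leq 0$. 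Strong convergence passes $G(y_\varepsilon,\cdot)\phi$ to the limit via $H_{3,1}$, and $A(y_\varepsilon,\cdot)$ is identified by the usual Minty--Browder argument using $H_{2,4}$ and monotonicity of $\lambda_T\mathrm{Id}+A$. The obstacle condition $y\geq\psi$ follows from the estimate $\Vert(y_\varepsilon-\psi)^-\Vert_{L^2(0,T;H)}^2\leq \varepsilon\,\Vert\xi_\varepsilon\Vert_{L^2}\Vert(y_\varepsilon-\psi)^-\Vert_{L^2}\to 0$, and the complementarity $\rho(y-\psi)=0$ from $\xi_\varepsilon(y_\varepsilon-\psi)=-\tfrac{1}{\varepsilon}|(y_\varepsilon-\psi)^-|^2\leq 0$ together with the signs of the limits. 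Continuity $y\in C([0,T];H)$ follows from standard results on evolution equations in Gelfand triples, the LS inequality (iv) survives the passage to the limit by weak lower semicontinuity of the relevant functionals, and uniqueness is precisely Lemma~\ref{uniq-Ske}.
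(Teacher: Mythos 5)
Your overall penalization strategy mirrors the paper's, but two of the steps as you describe them would not close, and the paper's proof is built precisely to avoid these obstructions. First, the paper does not penalize with the original reaction $G(y_\varepsilon,\cdot)\phi$: it replaces it by the truncated $\widetilde G(y_\varepsilon,\cdot)\phi:=G(\max(y_\varepsilon,\psi),\cdot)\phi$, and it uses the penalty $\frac{1}{\varepsilon}[(y_\varepsilon-\psi)^-]^{\tilde q-1}$ with $\tilde q=\min(2,p)$ rather than your linear one. The truncation is the decisive device: on $\{y_\varepsilon\leq\psi\}$ — exactly where the penalty is supported — one has $\widetilde G(y_\varepsilon,\cdot)=G(\psi,\cdot)=0$ by $H_4$, so when the equation for $y_\varepsilon-\psi$ is tested against $-(y_\varepsilon-\psi)^-$ the stochastic-reaction term vanishes identically, and the central a priori bound that $\frac{1}{\varepsilon}[(y_\varepsilon-\psi)^-]^{\tilde q-1}$ is uniformly bounded in $L^{\tilde q'}([0,T]\times D)$ drops out cleanly. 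You invoke $G(\psi,\cdot)=0$ to rewrite $G(y_\varepsilon,\cdot)\phi=(G(y_\varepsilon,\cdot)-G(\psi,\cdot))\phi$, but without the truncation this term does not vanish against the penalty test function; it is only Lipschitz-controlled by $MC_\phi\Vert y_\varepsilon-\psi\Vert_H\Vert(y_\varepsilon-\psi)^-\Vert_H$, a quantity that is not dominated by the good terms of the energy identity without further work. The exponent $\tilde q-1$ (and not $1$) is also needed when $p<2$ so that the penalty lies in $L^{\tilde q'}$, the order-dual where $h^-$ lives.

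Second, and more seriously, your proof of the upper Lewy--Stampacchia bound at the penalized level does not go through as written. Testing against $w:=\big(\tfrac{1}{\varepsilon}(y_\varepsilon-\psi)^- - h^-\big)^+$ cannot be handled by $H_{2,2}$, which is stated only for test functions of the form $(v_1-v_2)^+$: because $w$ depends on $h^-(t,x)$ and not only on $y_\varepsilon-\psi$, it is not a fixed nondecreasing function of the difference, so $\langle A(y_\varepsilon,\cdot)-A(\psi,\cdot),w\rangle$ has no sign from $H_{2,2}$. The time-derivative pairing $\int\langle\partial_t(y_\varepsilon-\psi),w\rangle\,ds$ is likewise not controlled by a chain rule, since $w$ is not a composition $g(y_\varepsilon-\psi)$ with a time-independent $g$. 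Finally, the Gronwall closure you describe does not loop: the $G$-contribution produces $\Vert(y_\varepsilon-\psi)^-\Vert_H$, not $\Vert w\Vert$, so it cannot force $w\equiv 0$. The paper sidesteps all of this by first establishing the $L^{\tilde q'}$ bound on the penalty (via the $\widetilde G$-truncation and testing against $-(y_\varepsilon-\psi)^-$), passing to the limit to get $(y,\rho)$, and then proving the Lewy--Stampacchia inequality at the limit by a duality argument: it introduces an auxiliary obstacle problem $(z,\nu)$ with one-sided constraint $z\leq y$ and source $f+h^-$, shows $z=y$ using uniqueness (Lemma~\ref{uniq-Ske}), and reads the upper bound $-\rho\leq h^-$ off the identity for $\nu$. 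This is the classical Mignot--Puel/Haugazeau dual-obstacle trick and does not require $h^-\in L^\infty$ nor any extended T-monotonicity for composite test functions; if you want to keep your penalized-level route, you would need a substantially different handle on both the $A$-term and the time derivative.
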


\subsubsection{Penalization}\label{subs-pen}

Let $\epsilon>0$ and consider the following  approximation: 	
\begin{align}\label{Sket-penalization}
\left\{ \begin{array}{l}
\dfrac{dy_\epsilon}{dt}+(A(y_\epsilon,\cdot)-\widetilde{G}(y_\epsilon,\cdot)\phi-\dfrac{1}{\epsilon}[(y_\epsilon-\psi)^-]^{\tilde{q}-1}=f\\[1.2ex]
y_\epsilon(0)=u_0,
\end{array}
\right.
\end{align}
where 	$\widetilde{G}(y_\epsilon,\cdot)=G(\max(y_\epsilon,\psi),\cdot)$\footnote{	The proposed perturbation	of	$\widetilde{G}$	  makes formally the	term	coming	from	$G$	vanishes	on the free-set where the constraint is violated,	which	play	a	crucial	role	to	estimate	the	reflection	due	to	the	obstacle.}. 
Denote	by $\bar{A}(y_\epsilon,\cdot)=A(y_\epsilon,\cdot)-\widetilde{G}(\cdot,y_\epsilon)\phi-\dfrac{1}{\epsilon}[(y_\epsilon-\psi)^-]^{\tilde{q}-1}-f$. 
\begin{enumerate}
	\item[(i)] 	Note	that	$
	\widetilde{G}(\cdot,y_\epsilon)\phi: [0,T]\times H \to H$ satisfies
	\begin{align*}
	\Vert \widetilde{G}(\cdot,y_\epsilon)\phi\Vert_H^2 \leq \Vert G(\cdot,\max( y_\epsilon,\psi))\Vert_{L_2(H_0,H)}^2 \Vert \phi\Vert_{H_0}^2\leq	L	C_\phi^2 (1+\Vert	y_\epsilon\Vert_{H}^2+\Vert	\psi\Vert_{H}^2).\end{align*}
		Thus,		by the	properties	of	the	penalization	term,		$A$	and	$f$	(see		\cite[Section	3.1]{YV22}), we	get	that	$\bar{A}$ is an operator defined on $V\times[0,T]$ with values in $V^\prime$. 
	\item[(ii)] (Local	monotonicity) Let $y_1,y_2 \in V$,	by using $H_{3,1}$ we get
	\begin{align*}
	([\widetilde{G}(\cdot,y_1)-\widetilde{G}(\cdot,y_2)]\phi,y_1-y_2) \leq M\Vert \phi\Vert_{H_0}\Vert y_1-y_2\Vert_H^2\leq MC_\phi\Vert y_1-y_2\Vert_H^2.
	\end{align*}
	   Thanks to $H_{2,2}$,  $H_{3,1}$	and		$x \mapsto -x^-$ is non-decreasing, there exists $C_1\in \mathbb{R}$ such that 
	\begin{align*}
	\langle \bar{A}(y_1,\cdot)-\bar{A}(y_2,\cdot),y_1-y_2\rangle \geq 	C_1\Vert y_1-y_2\Vert_H^2
	\end{align*}
	\item[(iii)]	 The structure of the penalization operator  and $H_{3,1}$ yield the hemi-continuity of $\bar{A}$.
	\item[(iv)]  (Coercivity): Note that for any $\delta>0$, there exists   $C_{\delta,\epsilon} >0$ such that: $\forall v \in V,$ 
	\begin{align*}
	\langle f,v\rangle \leq & C_\delta \Vert f\Vert_{V^\prime}^{p^\prime}+ \delta\Vert v\Vert_{V}^p \quad,
	\\
	\langle-\dfrac{1}{\epsilon}[(v-\psi)^-]^{\tilde{q}-1},v\rangle \geq &  \langle-\dfrac{1}{\epsilon}[(v-\psi)^-]^{\tilde{q}-1},\psi\rangle
	\\ \geq & -\delta \Vert v\Vert _{L^{\tilde{q}}(D)}^{\tilde{q}}- C_{\delta,\epsilon} \Vert \psi\Vert _{L^{\tilde{q}}(D)}^{\tilde{q}}\geq  -\delta C \Vert v\Vert _{V}^{p}- C_{\delta,\epsilon} \Vert \psi\Vert _{L^{\tilde{q}}(D)}^{\tilde{q}}-C_{\delta,\epsilon},\\
	(\widetilde{G}(\cdot,v)\phi,v) \leq&  \Vert \widetilde{G}(\cdot,v)\Vert_{L_2(H_0,H)}\Vert \phi\Vert_{H_0}\Vert v\Vert_H\leq  \Vert \widetilde{G}(\cdot,v)\Vert_{L_2(H_0,H)}^2+
C_\phi^2	\Vert v\Vert_H^2\\
	\leq& (L+C_\phi^2)\Vert v\Vert_H^2+L (1+\Vert	\psi\Vert_{H}^2),
	\end{align*}
	where $C$ is related  to the continuous embedding of $V$ in $L^{\tilde{q}}(D)$.		Denote by 
	$\tilde{l}_1=L (1+\Vert	\psi\Vert_{H}^2)+l_1+C_\delta \Vert f\Vert_{V^\prime}^{p^\prime}+C_{\delta,\epsilon} \Vert \psi\Vert _{L^{\tilde{q}}(D)}^{\tilde{q}}$, it is  $L^\infty([0,T])$  thanks to the assumptions on $f$ and $\psi$, depending only on the data. Therefore, by a convenient choice of $\delta$, $\bar{A}$ satisfies $H_{2,1}$ by considering $\tilde{l}_1$ instead  of $l_1$.
	\item[(v)] (Growth):  Let	$ v \in V,$
	\begin{align*}
	\| -\dfrac{1}{\epsilon}[(v-\psi)^-]^{\tilde{q}-1} \|_{L^{\tilde q^\prime}(D)} =\dfrac{1}{\epsilon}\|(v-\psi)^- \|_{L^{\tilde q}(D)}^{\tilde{q}-1} &\leq C_\epsilon\left(\|v\|_{L^{\tilde q}(D)}^{\tilde{q}-1}+\|\psi\|_{L^{\tilde q}(D)}^{\tilde{q}-1} \right) \\
	&\leq C_\epsilon\left(\|v\|_{L^{\tilde q}(D)}^{p-1}+\|\psi\|_{L^{\tilde q}(D)}^{p-1}\right) +C_p
	\end{align*} 
	since $\tilde q <p$ may be possible. Now, since the embeddings of $L^{\tilde q^\prime}(D)$ in $V^\prime$ and of  $V$  in $L^{\tilde{q}}(D)$ are continuous,  one	has
$
	\| -\dfrac{1}{\epsilon}[(v-\psi)^-]^{\tilde{q}-1} \|_{V^\prime}  \leq C_\epsilon\left(\|v\|_{V}^{p-1}+\|\psi\|_{V}^{p-1}\right) +C_p
$.
	Moreover,		
$
		\Vert \widetilde{G}(\cdot,v)\phi\Vert_H \leq  \Vert \widetilde{G}(\cdot,v)\Vert_{L_2(H_0,H)}\Vert \phi\Vert_{H_0}\leq C_\phi^2+L\Vert v\Vert_H^2+L(1+\Vert \psi\Vert_H^2).
$
	Since	 the embeddings of $H$ in $V^\prime$	is	continuous,	we	get	
	\begin{align*}
		\Vert\bar{A}(\cdot,v)\Vert_{V^\prime}&\leq	C_\phi^2+L\Vert v\Vert_H^2+L(1+\Vert \psi\Vert_H^2)+C_\epsilon\left(\|v\|_{V}^{p-1}+\|\psi\|_{V}^{p-1}\right) +C_p+g+\bar{K}\Vert	v\Vert_{V}^{p-1}\\
		&\leq	K_\epsilon\Vert	v\Vert_{V}^{p-1}+L\Vert v\Vert_H^2+\tilde{g},
	\end{align*}
	where	$\tilde{g}=C_\epsilon\|\psi\|_{V}^{p-1}++L(1+\Vert \psi\Vert_H^2)+g+C_\phi^2+C_p	\in	L^\infty([0,T])$.	Therefore	$\bar{A}$	satisfies	$H_{2,3}$	with	$\tilde{g}$	instead	of	$g$.	
\end{enumerate}
By	using	 \cite[Thm. 5.1.3]{RoK15}, for all $\epsilon>0$, there exists  a unique 	solution $u_\epsilon \in L^p(0,T;V)\cap	C([0,T];H)$   to	\eqref{Sket-penalization}	and satisfying:  
\begin{align*}
y_\epsilon(t)+\int_0^t(A(y_\epsilon,\cdot)-\widetilde{G}(y_\epsilon,\cdot)\phi-\dfrac{1}{\epsilon}[(y_\epsilon-\psi)^-]^{\tilde{q}-1})ds=u_0+\int_0^tfds\quad	\text{	in	}	V^\prime	\text{ for all }	t\in[0,T].
\end{align*}

\subsubsection{Uniform estimates}\label{sub-estim-pen}
\begin{lemma}\label{lem1}
	\begin{enumerate}
		\item $(y_\epsilon)_{\epsilon>0}$ is bounded in  $ L^p(0,T;V)\cap C([0,T];H)$.
		\item $(A(y_\epsilon,\cdot))_{\epsilon>0}$ is bounded in $L^{p^\prime}(0,T;V^\prime).$ 
		\item  $ (\dfrac{(y_\epsilon-\psi)^-}{\epsilon^{\frac{1}{\tilde{q}}}})_\epsilon$	is	bounded	in	$L^{\tilde{q}}([0,T]\times	D).$
	\end{enumerate}
\end{lemma}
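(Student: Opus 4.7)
The plan is to first establish estimate (3) by testing the penalized equation against $-(y_\epsilon-\psi)^-$, which isolates the defect set and exploits assumption $H_4$ that $G(\psi,\cdot)=0$; estimates (1) and (2) then follow by bootstrapping. For (3), introduce $z_\epsilon=y_\epsilon-\psi$ and use the identity $\partial_t\psi+A(\psi,\cdot)=f-h$ from $H_5$ to rewrite the penalized equation as
\[\partial_t z_\epsilon+[A(y_\epsilon,\cdot)-A(\psi,\cdot)]-\widetilde G(y_\epsilon,\cdot)\phi-\tfrac{1}{\epsilon}[z_\epsilon^-]^{\tilde q-1}=h.\]
Test with $-z_\epsilon^-$. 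The chain rule for $\tfrac{1}{2}\Vert z_\epsilon^-\Vert_H^2$ handles the time derivative; the penalization contributes the coercive term $\tfrac{1}{\epsilon}\Vert z_\epsilon^-\Vert_{L^{\tilde q}(D)}^{\tilde q}$; the noise term vanishes because $\widetilde G(y_\epsilon,\cdot)=G(\psi,\cdot)=0$ on $\supp z_\epsilon^-\subset\{y_\epsilon<\psi\}$ by construction of $\widetilde G$ and $H_4$; and the operator difference is bounded below by $-\lambda_T\Vert z_\epsilon^-\Vert_H^2$ via $H_{2,2}$ applied with $v_1=\psi,\ v_2=y_\epsilon$. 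On the right-hand side $\langle h,-z_\epsilon^-\rangle\le\langle h^-,z_\epsilon^-\rangle$, and a weighted Young inequality (using $H^{**}$) dominates this by $\tfrac{1}{2\epsilon}\Vert z_\epsilon^-\Vert_{L^{\tilde q}}^{\tilde q}+C\epsilon^{\tilde q'/\tilde q}\Vert h^-\Vert_{L^{\tilde q'}}^{\tilde q'}$. After absorption, the initial condition $z_\epsilon^-(0)=0$ (from $H_6$) and Gr\"onwall's lemma produce $\tfrac{1}{\epsilon}\int_0^T\Vert z_\epsilon^-\Vert_{L^{\tilde q}}^{\tilde q}\,ds\le C$, which is assertion (3).

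For (1), I test the original equation with $y_\epsilon$; coercivity $H_{2,1}$ extracts $\alpha\int_0^t\Vert y_\epsilon\Vert_V^p$. The only non-routine term is the penalization, which splits as $-\tfrac{1}{\epsilon}\Vert z_\epsilon^-\Vert_{L^{\tilde q}}^{\tilde q}+\tfrac{1}{\epsilon}\langle[z_\epsilon^-]^{\tilde q-1},\psi\rangle$; the first piece is favourable. For the second I apply H\"older in space, then H\"older in time with exponents $(\tilde q,\tilde q')$, and substitute (3); the powers of $\epsilon$ cancel exactly because $-1+\tilde q'(\tilde q-1)/\tilde q=0$, leaving a bound uniform in $\epsilon$ of the form $C\Vert\psi\Vert_{L^{\tilde q}((0,T)\times D)}$, which is finite since $\psi\in L^\infty(0,T;V)\hookrightarrow L^\infty(0,T;L^{\tilde q}(D))$. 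The noise term is tamed by $H_3$, Cauchy-Schwarz, and the $L^\infty$ bound on $\phi$ from $H^*$; a Young split of $\langle f,y_\epsilon\rangle$ and Gr\"onwall conclude the bound in $L^\infty(0,T;H)\cap L^p(0,T;V)$, and continuity into $H$ follows from the regularity of the equation. Estimate (2) is then immediate from $H_{2,3}$ raised to the power $p'$, integrated in time, together with (1) and $g\in L^\infty([0,T])$.

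The main obstacle is the cross term $\tfrac{1}{\epsilon}\langle[z_\epsilon^-]^{\tilde q-1},\psi\rangle$ in the estimate leading to (1), which \emph{a priori} diverges as $\epsilon^{-1}$. A uniform bound only emerges once (3) is established with the sharp $\epsilon^{-1/\tilde q}$-scaling of $z_\epsilon^-$; this sharp scaling in turn relies on the vanishing of the noise on the defect set provided by $H_4$ together with the modified diffusion $\widetilde G$. The logical order $(3)\Rightarrow(1)\Rightarrow(2)$ is therefore forced by the delicate $\epsilon$-accounting.
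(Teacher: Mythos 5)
Your plan proves the three estimates in the order $(3)\Rightarrow(1)\Rightarrow(2)$, whereas the paper gets $(1)$ and $(3)$ simultaneously from one energy estimate: testing the penalized equation with $y_\epsilon-\psi$ makes the penalization term the sign-definite quantity $\tfrac{2}{\epsilon}\int_0^t\!\!\int_D[(y_\epsilon-\psi)^-]^{\tilde q}\,dx\,ds\geq 0$, so no cross term appears, the coercivity of $A$ gives the $V$-bound, and neither $H^{**}$ nor $G(\psi,\cdot)=0$ is needed at this stage (both are saved for Lemma~\ref{lem2-LDP}). Your step for $(3)$ --- testing with $-(y_\epsilon-\psi)^-$ and using $H^{**}$, $H_4$, the modified $\widetilde G$ --- is essentially the paper's proof of Lemma~\ref{lem2-LDP}, and the Gr\"onwall output actually delivers the stronger estimate $\tfrac{1}{\epsilon}\int_0^T\|(y_\epsilon-\psi)^-\|_{L^{\tilde q}}^{\tilde q}\,ds\leq C\epsilon^{\tilde q'/\tilde q}$, not merely the $\leq C$ recorded in item~$(3)$.

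This matters because the bootstrap $(3)\Rightarrow(1)$ as you wrote it has a genuine $\epsilon$-accounting gap. Testing with $y_\epsilon$ produces the cross term $\tfrac{1}{\epsilon}\int_0^T\int_D[(y_\epsilon-\psi)^-]^{\tilde q-1}\psi\,dx\,ds$, which after your two H\"older steps is dominated by
$\tfrac{1}{\epsilon}\bigl(\int_0^T\|(y_\epsilon-\psi)^-\|_{L^{\tilde q}(D)}^{\tilde q}\,ds\bigr)^{1/\tilde q'}\|\psi\|_{L^{\tilde q}([0,T]\times D)}.$
If one inserts only the bound stated in $(3)$, namely $\int_0^T\|(y_\epsilon-\psi)^-\|_{L^{\tilde q}}^{\tilde q}\,ds\leq C\epsilon$, this gives $C\epsilon^{1/\tilde q'-1}=C\epsilon^{-1/\tilde q}\to\infty$ and the argument does not close. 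The exponent $-1+\tilde q'(\tilde q-1)/\tilde q=0$ that you quote corresponds instead to substituting $\|(y_\epsilon-\psi)^-\|_{L^{\tilde q}([0,T]\times D)}\leq C\epsilon^{\tilde q'/\tilde q}$, i.e.\ $\int_0^T\|(y_\epsilon-\psi)^-\|_{L^{\tilde q}}^{\tilde q}\,ds\leq C\epsilon^{\tilde q'}$ --- precisely the sharper bound your Gr\"onwall step produced but which you discarded when rounding it down to ``assertion $(3)$''. The proof is recoverable if you carry that sharper power forward (equivalently, prove Lemma~\ref{lem2-LDP} before $(1)$), but as written the chain $(3)\Rightarrow(1)$ is broken; the paper's single test with $y_\epsilon-\psi$ is shorter and sidesteps this bookkeeping entirely. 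Item $(2)$ from $H_{2,3}$ is fine and matches the paper.
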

\begin{proof}
	Let $\epsilon >0$ and note	that
$\dfrac{d(y_\epsilon-\psi)}{dt}+\Big(A(y_\epsilon,\cdot)-\widetilde{G}(\cdot,y_\epsilon)\phi-\dfrac{1}{\epsilon}[(y_\epsilon-\psi)^-]^{\tilde{q}-1})
	=	[ f- \dfrac{d\psi}{dt}]. $

	By using $y_\epsilon-\psi$ as a test function and integrating in time from $0$ to $t$, we get
	\begin{align*}
	&\|(y_\epsilon-\psi)(t)\|_{H}^2+2\displaystyle\int_0^t \langle A(y_\epsilon,\cdot),y_\epsilon-\psi\rangle ds -2\int_0^t(\widetilde{G}(\cdot,y_\epsilon)\phi,y_\epsilon-\psi)ds
	\\ &-2\int_0^t \int_D\dfrac{1}{\epsilon}[(u_\epsilon-\psi)^-]^{\tilde{q}-1}(u_\epsilon-\psi)dxds = \|u_0-\psi(0)\|^2_{H}
	+
	2\int_0^t \langle  f- \dfrac{d\psi}{ds}  , y_\epsilon-\psi \rangle ds.
		\end{align*}
	Note that 
$
	-2\displaystyle\int_0^t \int_D\dfrac{1}{\epsilon}[(y_\epsilon-\psi)^-]^{\tilde{q}-1}(y_\epsilon-\psi)dxds
	 = \dfrac{2}{\epsilon}\int_0^t \!\!\!\int_D[(y_\epsilon-\psi)^-]^{\tilde{q}}dxds.
$
	By using $H_{2,3}$	and	$H_3$,		 we get
	\begin{align*}
		(\widetilde{G}(\cdot,y_\epsilon)\phi,y_\epsilon-\psi) &\leq \Vert \widetilde{G}(\cdot,y_\epsilon)\Vert_{L_2(H_0,H)}\Vert \phi\Vert_{H_0}\Vert y_\epsilon-\psi\Vert_H	
		\leq	L\Vert y_\epsilon\Vert_H^2+C_\phi^2\Vert y_\epsilon-\psi\Vert_H^2+C(\psi);
	\end{align*}
	\begin{align*}
	\langle A(y_\epsilon,\cdot),y_\epsilon-\psi\rangle  \geq& \alpha \Vert y_\epsilon\Vert_V^p - \lambda \Vert y_\epsilon\Vert_H^2-l_1 -  \langle A(y_\epsilon,\cdot),\psi\rangle
	\\
	\geq& \alpha \Vert y_\epsilon\Vert_V^p - \lambda \Vert y_\epsilon\Vert_H^2-l_1  - \bar K\Vert y_\epsilon\Vert_V^{p-1}\Vert \psi\Vert_V-g\Vert \psi\Vert_V
	\\
	\geq& \frac{\alpha}{2} \Vert y_\epsilon\Vert_V^p - \lambda \Vert y_\epsilon\Vert_H^2-l_1  - C(\psi),
	\end{align*}
	where $C(\psi) \in L^1([0,T])$. Thus, for any positive $\gamma$, Young's inequality yields the existence of a positive constant $C_\gamma$ that may change form line to line, such that 
	\begin{align*}
	&\|(y_\epsilon-\psi)(t)\|_{H}^2+2\displaystyle \int_0^t \frac{\alpha}{2} \Vert y_\epsilon(s) \Vert_V^p ds +\dfrac{2}{\epsilon}\int_0^t \!\!\!\int_D[(y_\epsilon-\psi)^-]^{\tilde{q}}dxds
	\leq  
	(\lambda+L) \int_0^t \Vert y_\epsilon(s)\Vert_H^2  ds\\&+ \|l_1+C(\psi)\|_{L^1([0,T])} + C_\gamma(f, \dfrac{d\psi}{dt} ) 
	+ \gamma \int_0^t\|(y_\epsilon-\psi)(s) \|^p_V ds+C_\phi^2\int_0^t\Vert (y_\epsilon-\psi)(s)\Vert_H^2ds+Lt 
	\\ &\qquad\leq  
	C \int_0^t \Vert (y_\epsilon - \psi)(s)\Vert_H^2  ds+ \frac{\alpha}{2}\int_0^t\|y_\epsilon(s)\|^p_V ds+ C,
	\end{align*}
	for a suitable choice of $\gamma$. 
	Then, the first part	and	the third one	 of the lemma is proved by  Gronwall's lemma,  the second   by adding H$_{2,3}$.
\end{proof}
Note	that					Lemma	\ref{lem1}$(3)$	is	not	sufficient	to	pass	to	the	limit	in	the	penalization	term.	Thus,	we	will	prove	the	next	result,	where	$H_4$,	$H_5$	and	the	perturbation	of	$G$	will	play	a	crucial	role.

\begin{lemma}\label{lem2-LDP} 
	$(\dfrac{1}{\epsilon}[(y_\epsilon-\psi)^-]^{\tilde{q}-1})_{\epsilon>0}$ is bounded in $L^{\tilde{q}^\prime}([0,T]\times D).$
\end{lemma}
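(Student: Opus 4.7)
My plan is to establish the uniform $L^{\tilde{q}'}$-bound on $v_\epsilon := \frac{1}{\epsilon}[(y_\epsilon - \psi)^-]^{\tilde{q} - 1}$ by deriving an integrated version of the penalized Lewy--Stampacchia inequality. The key ingredients are the decomposition $f = \partial_t\psi + A(\psi,\cdot) + h^+ - h^-$ provided by $H_5$, together with the vanishing property $G(\psi,\cdot) = 0$ from $H_4$ and the perturbation structure $\widetilde{G}(y_\epsilon,\cdot) = G(\max(y_\epsilon,\psi),\cdot)$. Subtracting the ``equation for $\psi$'' from \eqref{Sket-penalization} yields the auxiliary identity
\[
v_\epsilon - h^- = \partial_t(y_\epsilon - \psi) + A(y_\epsilon,\cdot) - A(\psi,\cdot) - \widetilde{G}(y_\epsilon,\cdot)\phi - h^+. \qquad (\star)
\]

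I would then test $(\star)$ against $v_\epsilon^{\tilde{q}' - 1}$. Since $\tilde{q}'(\tilde{q} - 1) = \tilde{q}$, one has the identification $v_\epsilon^{\tilde{q}' - 1} = \epsilon^{-(\tilde{q}' - 1)} w_\epsilon$ where $w_\epsilon := (y_\epsilon - \psi)^- \in L^p(0,T;V)$, so this is an admissible test function. The stochastic-reaction term vanishes in the integrated pairing because $v_\epsilon^{\tilde{q}' - 1}$ is supported on $\{y_\epsilon < \psi\}$, where $\widetilde{G}(y_\epsilon,\cdot) = G(\psi,\cdot) = 0$ by $H_4$; this is exactly where the perturbation of $G$ pays off. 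The time-derivative contribution equals $-\frac{1}{2\epsilon^{\tilde{q}'-1}}\|w_\epsilon(T)\|_H^2 \leq 0$ by the chain rule, using $w_\epsilon(0) = (u_0 - \psi(0))^- = 0$ from $H_6$. The operator term is estimated via $T$-monotonicity $H_{2,2}$ applied with $v_1 = \psi, v_2 = y_\epsilon$, yielding $\int_0^T \langle A(y_\epsilon,\cdot) - A(\psi,\cdot), v_\epsilon^{\tilde{q}'-1}\rangle\,ds \leq \frac{\lambda_T}{\epsilon^{\tilde{q}'-1}}\int_0^T \|w_\epsilon\|_H^2\,ds$, while $-\int h^+ v_\epsilon^{\tilde{q}'-1}\,dx\,ds \leq 0$ by non-negativity. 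Applying Young's inequality to $\int h^- v_\epsilon^{\tilde{q}'-1}$ to absorb a fraction of $\int v_\epsilon^{\tilde{q}'}$ into the left-hand side leads to
\[
\int_0^T\!\!\int_D v_\epsilon^{\tilde{q}'}\,dx\,ds \leq C\|h^-\|_{L^{\tilde{q}'}([0,T]\times D)}^{\tilde{q}'} + \frac{C\lambda_T^+}{\epsilon^{\tilde{q}'-1}}\int_0^T \|w_\epsilon(s)\|_H^2\,ds.
\]

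The main obstacle is the uniform control in $\epsilon$ of the second term on the right-hand side when $\lambda_T > 0$. When $\tilde{q} = 2$ (i.e.\ $p \geq 2$), the exponent $\tilde{q}'-1$ equals $1$, and Lemma \ref{lem1}(3) directly gives $\frac{1}{\epsilon}\int_0^T\|w_\epsilon\|_H^2\,ds \leq C$. When $\tilde{q} = p < 2$, the needed decay $\int_0^T\|w_\epsilon\|_H^2\,ds \leq C\epsilon^{1/(p-1)}$ is extracted from a Gagliardo--Nirenberg-type interpolation $\|w_\epsilon\|_{L^2(D)} \leq C\|w_\epsilon\|_V^{\theta}\|w_\epsilon\|_{L^p(D)}^{1-\theta}$ with $\theta = d(2-p)/(2p) \in (0,1)$, which is licit precisely under the standing assumption $p > 2d/(d+2)$, combined with Lemma \ref{lem1}(1),(3). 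Putting everything together, one obtains the desired $L^{\tilde{q}'}([0,T] \times D)$-bound on $v_\epsilon$ independent of $\epsilon$, completing the proof.
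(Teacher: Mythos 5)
Your derivation of the identity $(\star)$, the vanishing of the stochastic-reaction term on $\{y_\epsilon < \psi\}$ via $H_4$ and the perturbed $\widetilde G$, the appeal to $T$-monotonicity $H_{2,2}$, and the sign of the $h^+$ term all match the paper's strategy, and the reduction to controlling $\lambda_T^+ \epsilon^{-(\tilde q'-1)}\int_0^T \|w_\epsilon\|_H^2\,ds$ is correct. The gap is in how you close this term. You discard the time-derivative contribution $\frac{1}{2\epsilon^{\tilde q'-1}}\|w_\epsilon(T)\|_H^2$ as merely ``$\leq 0$ on the right,'' but in fact that term, kept on the \emph{left} at an intermediate time $t$, is precisely what turns the estimate into a Gr\"onwall-type differential inequality
\[
\frac{1}{2\epsilon^{\tilde q'-1}}\|w_\epsilon(t)\|_H^2 + \tfrac12\int_0^t\!\!\int_D v_\epsilon^{\tilde q'}\,dx\,ds
\;\leq\; C\|h^-\|_{L^{\tilde q'}}^{\tilde q'} + \frac{2\lambda_T^+}{2\epsilon^{\tilde q'-1}}\int_0^t\|w_\epsilon(s)\|_H^2\,ds,
\]
whence Gr\"onwall directly yields $\|w_\epsilon(t)\|_H^2 \leq C\epsilon^{\tilde q'-1}$ and the desired bound on $\int v_\epsilon^{\tilde q'}$ follows by plugging back. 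This is effectively what the paper's inequality \eqref{Pen} sets up (after a weighted Young step on the $h^-$ pairing), before it defers to \cite[Lemma~3]{YV22}.

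Your substitute for this Gr\"onwall step --- the Gagliardo--Nirenberg interpolation to produce $\int_0^T\|w_\epsilon\|_H^2\,ds \leq C\epsilon^{1/(p-1)}$ when $\tilde q = p < 2$ --- does not work. First, the H\"older step needed to combine $\|w_\epsilon\|_V^{2\theta}$ (available only in $L^{p/(2\theta)}$ in time) with $\|w_\epsilon\|_{L^p}^{2(1-\theta)}$ requires $2\theta \leq p$; with $\theta = d(2-p)/(2p)$ this becomes $d(2-p)\leq p^2$, which fails for $p$ near the admissible lower endpoint $2d/(d+2)$ (there $d(2-p)=\tfrac{4d}{d+2}>\tfrac{4d^2}{(d+2)^2}=p^2$). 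Second, even where $2\theta<p$, the exponent you can extract from Lemma \ref{lem1}(1),(3) is $\int_0^T\|w_\epsilon\|_H^2\,ds \lesssim \epsilon^{(p-2\theta)/p}$, and $(p-2\theta)/p \geq 1/(p-1)$ is equivalent to $-(2-p)\bigl[p^2+d(p-1)\bigr]\geq 0$, which is false for all $1<p<2$. So the interpolation route underestimates the required rate by a strict margin, and the lemma would remain unproved for $p<2$ (and $\lambda_T>0$); you need the Gr\"onwall mechanism that the discarded time-derivative term provides.
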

\begin{proof}
From \eqref{Sket-penalization}, we have
	\begin{align*}
	&\dfrac{d(y_\epsilon-\psi)}{dt}+\Big(A(y_\epsilon,\cdot)-A(\psi,\cdot)-\widetilde{G}(\cdot,y_\epsilon)\phi-\dfrac{1}{\epsilon}[(y_\epsilon-\psi)^-]^{\tilde{q}-1})
	=[ f- \dfrac{d\psi}{dt}-A(\psi,\cdot)] 
	\end{align*}
		By using the admissible test function $-(y_\epsilon-\psi)^-$ in \eqref{Sket-penalization}, integrating in time from $0$ to $t$	(see		\cite[Corollary	4.5]{GMYV})	and	using	that	$u_0\geq		\psi(0)$, we obtain
	\begin{align*}
	&\|(y_\epsilon-\psi)^-(t) \|_{H}^2+2\displaystyle\int_0^t \langle A(y_\epsilon,\cdot)-A(\psi,\cdot),-(y_\epsilon-\psi)^-\rangle ds 
	+2\int_0^t \int_D\dfrac{1}{\epsilon}[(y_\epsilon-\psi)^-]^{\tilde{q}}dxds \\ &= 2\int_0^t(\widetilde{G}(\cdot,y_\epsilon)\phi,-(y_\epsilon-\psi)^-)ds
		+2\int_0^t \langle  f- \dfrac{d\psi}{ds}-A(\psi,\cdot)  , -(y_\epsilon-\psi)^- \rangle ds\\
		&\leq  2\int_0^t \langle  -h^-    , -(y_\epsilon-\psi)^- \rangle ds.		
	\end{align*}
	Note	that	$(\widetilde{G}(\cdot,y_\epsilon)\phi,-(y_\epsilon-\psi)^-)=(G(\cdot,\psi)\phi,-(y_\epsilon-\psi)^-)=0$	a.e.	in	$[0,T]$,	since	$\widetilde{G}(\cdot,y_\epsilon)=G(\cdot,\psi)$	on	the	set	$\{y_\epsilon\leq	\psi\}$	.

		On	the	other	hand,	$H_{2,2}$	ensures
	$\langle A(y_\epsilon,\cdot)-A(\psi,\cdot),-2(y_\epsilon-\psi)^-\rangle\geq -2\lambda_T \Vert (y_\epsilon-\psi)^-\Vert_H^2,$ a.e. $t\in [0,T]$,	since  this last term is equal to  $2\langle A(\psi,\cdot)-A(y_\epsilon,\cdot),(\psi-y_\epsilon)^+\rangle \geq -2\lambda_T \Vert (\psi-y_\epsilon)^+\Vert_H^2$.
	By	using	$H_5$,	we	get
		\begin{align}
		\Vert(y_\epsilon-\psi)^-(t)\Vert_{L^2(D)}^2&+\dfrac{2}{\epsilon} \displaystyle\int_0^t\Vert (y_\epsilon-\psi)^-(s)\Vert_{L^{\tilde{q}}(D)}^{\tilde{q}}ds \nonumber \\  &\leq 2\displaystyle\int_0^t\langle h^-(s), (y_\epsilon-\psi)^-(s)\rangle ds+2\lambda_T \int_0^t\Vert (y_\epsilon-\psi)^-(s)\Vert_H^2ds.\label{Pen}
		\end{align}
			We	are	in	position	to	use		similar		arguments	
			to	the	one	used	in	the	last	part	of			the	proof	of		\cite[Lemma	3]{YV22}	to	conclude.\end{proof}

As a consequence,	the following lemma holds.
\begin{lemma}\label{lem3} 
	$(y_\epsilon)_{\epsilon>0}$ is  a Cauchy sequence in the space $C([0,T];H)$.
\end{lemma}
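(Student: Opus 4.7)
The strategy is a standard energy estimate on the difference of two penalized solutions, with the non-trivial point being the control of the penalization cross-term via the uniform bounds of Lemmas~\ref{lem1}(3) and \ref{lem2-LDP}. Fix $\epsilon,\eta>0$ and subtract the penalized equations \eqref{Sket-penalization}. Testing with $y_\epsilon-y_\eta\in L^p(0,T;V)$ via the chain rule for the Gelfand triple (noting $y_\epsilon(0)=y_\eta(0)=u_0$) and setting $P_\epsilon:=\tfrac{1}{\epsilon}[(y_\epsilon-\psi)^-]^{\tilde{q}-1}\geq 0$, I obtain
\begin{align*}
\tfrac{1}{2}\|(y_\epsilon-y_\eta)(t)\|_H^2 &+ \int_0^t\langle A(y_\epsilon,\cdot)-A(y_\eta,\cdot), y_\epsilon-y_\eta\rangle\,ds \\
&= \int_0^t\bigl([\widetilde{G}(y_\epsilon,\cdot)-\widetilde{G}(y_\eta,\cdot)]\phi,\, y_\epsilon-y_\eta\bigr)\,ds + \int_0^t\!\!\int_D (P_\epsilon-P_\eta)(y_\epsilon-y_\eta)\,dx\,ds.
\end{align*}
The $A$-term is bounded below by $-\lambda_T\int_0^t\|y_\epsilon-y_\eta\|_H^2\,ds$ via $H_{2,2}$. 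For the $\widetilde{G}$-term, the $1$-Lipschitz property of $v\mapsto \max(v,\psi)$ together with $H_{3,1}$ and assumption $H^*$ give the upper bound $M^{1/2}C_\phi\int_0^t\|y_\epsilon-y_\eta\|_H^2\,ds$.

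The heart of the argument is the algebraic decomposition of the penalization cross-term. Since $P_\epsilon$ is supported in $\{y_\epsilon<\psi\}$, one has the key identity $P_\epsilon(y_\epsilon-\psi)=-\tfrac{1}{\epsilon}[(y_\epsilon-\psi)^-]^{\tilde{q}}$, and analogously for $\eta$. Writing $y_\epsilon-y_\eta=(y_\epsilon-\psi)-(y_\eta-\psi)$ and splitting each factor into positive and negative parts gives
\begin{align*}
(P_\epsilon-P_\eta)(y_\epsilon-y_\eta)
&= -\tfrac{1}{\epsilon}[(y_\epsilon-\psi)^-]^{\tilde{q}} - \tfrac{1}{\eta}[(y_\eta-\psi)^-]^{\tilde{q}} \\
&\quad - P_\epsilon(y_\eta-\psi)^+ - P_\eta(y_\epsilon-\psi)^+ + P_\epsilon(y_\eta-\psi)^- + P_\eta(y_\epsilon-\psi)^-.
\end{align*}
The first four summands on the right are non-positive and can be discarded; the two remaining ``mismatched'' cross-terms are estimated by H\"older's inequality in space-time with conjugate exponents $(\tilde{q}',\tilde{q})$. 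Lemma~\ref{lem2-LDP} yields $\|P_\epsilon\|_{L^{\tilde{q}'}([0,T]\times D)}\leq C$ and Lemma~\ref{lem1}(3) yields $\|(y_\eta-\psi)^-\|_{L^{\tilde{q}}([0,T]\times D)}\leq C\eta^{1/\tilde{q}}$, so
$$\int_0^t\!\!\int_D \bigl[P_\epsilon(y_\eta-\psi)^- + P_\eta(y_\epsilon-\psi)^-\bigr]\,dx\,ds \leq C\bigl(\epsilon^{1/\tilde{q}}+\eta^{1/\tilde{q}}\bigr).$$

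Collecting everything yields
$$\|(y_\epsilon-y_\eta)(t)\|_H^2 \leq C\bigl(\epsilon^{1/\tilde{q}}+\eta^{1/\tilde{q}}\bigr) + C\int_0^t\|y_\epsilon-y_\eta\|_H^2\,ds,$$
and Gr\"onwall's lemma produces $\sup_{t\in[0,T]}\|(y_\epsilon-y_\eta)(t)\|_H^2 \leq C\bigl(\epsilon^{1/\tilde{q}}+\eta^{1/\tilde{q}}\bigr)e^{CT}\to 0$, proving the Cauchy property in $C([0,T];H)$. The main obstacle is that the penalization cross-term does not have a favourable sign from monotonicity of $r\mapsto-(r^-)^{\tilde{q}-1}$ alone: monotonicity would give the \emph{wrong} sign when passed to the right-hand side. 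One must isolate the precise ``mismatched'' pieces $P_\epsilon(y_\eta-\psi)^-$ and $P_\eta(y_\epsilon-\psi)^-$ and realise that they pair up exactly with the bounds already established, namely the $L^{\tilde{q}'}$-estimate on the penalization and the $\epsilon^{1/\tilde{q}}$-rate of vanishing of $(y_\epsilon-\psi)^-$ in $L^{\tilde{q}}$. The perturbation $G\rightsquigarrow\widetilde{G}$ from Subsection~\ref{subs-pen} plays a crucial background role, since it is what enabled the sharp $L^{\tilde{q}'}$-bound of Lemma~\ref{lem2-LDP} in the first place.
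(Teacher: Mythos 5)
Your proof is correct and follows the paper's strategy: test the difference of the penalized equations with $y_\epsilon-y_\eta$, handle the $A$- and $\widetilde G$-terms as in Lemma~\ref{uniq-Ske}, and reduce the claim to controlling the penalization cross-term. The difference is that the paper does not write this last step out --- it simply invokes ``monotonicity of the penalization operator, arguments already detailed in the proof of [YV22, Lemma 4]'' to assert $\sup_t\|y_\epsilon-y_\eta\|_H^2 \le C(\epsilon+\epsilon^{1/(p-1)}) + C\int_0^T\sup_{\tau\le s}\|y_\epsilon-y_\eta\|_H^2\,ds$ --- whereas you supply a self-contained decomposition. Your six-term identity for $(P_\epsilon-P_\eta)(y_\epsilon-y_\eta)$ checks out (the first four pieces are non-positive and can be dropped from the upper bound), and the H\"older pairing of $\|P_\epsilon\|_{L^{\tilde q'}}$ from Lemma~\ref{lem2-LDP} against $\|(y_\eta-\psi)^-\|_{L^{\tilde q}}\le C\eta^{1/\tilde q}$ from Lemma~\ref{lem1}(3) (and symmetrically) gives a valid bound. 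The resulting rate $C(\epsilon^{1/\tilde q}+\eta^{1/\tilde q})$ differs from the paper's quoted $C(\epsilon+\epsilon^{1/(p-1)})$ --- so the underlying decomposition in [YV22] is presumably not identical to yours --- but both tend to $0$ as $1>\epsilon\ge\eta\to0$, and Gr\"onwall finishes the argument either way. Your closing remark is also on point: monotonicity of $r\mapsto-(r^-)^{\tilde q-1}$ alone does not neutralize the cross-term because the two scaling factors $1/\epsilon$ and $1/\eta$ are different, and the whole content of the lemma is in isolating and bounding the two nonnegative ``mismatched'' pieces.
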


\begin{proof}
	Let $1>\epsilon\geq \eta >0$ and consider  $y_\epsilon-y_\eta$, which satisfies the following equation
	\begin{align*}
	y_\epsilon(t)-y_\eta(t)+\displaystyle\int_0^t (A(y_\epsilon,\cdot)-A(y_\eta,\cdot))&+ (-\dfrac{1}{\epsilon}[(y_\epsilon-\psi)^-]^{\tilde{q}-1}+\dfrac{1}{\eta}[(y_\eta-\psi)^-]^{\tilde{q}-1})ds\\
	& =\displaystyle\int_0^t (\widetilde{G}(y_\epsilon,\cdot)\phi-\widetilde{G}(y_\eta,\cdot)\phi)ds. 
	\end{align*}
	By	using	$y_\epsilon-y_\eta$	as	test	function	and	integrate	from	$0$	to	$t$, one gets for any $t\in [0,T]$
	\begin{align*}
	&\dfrac{1}{2}\Vert (y_\epsilon-y_\eta)(t)\Vert_H^2+\displaystyle\int_0^t \langle A(y_\epsilon,\cdot)-A(y_\eta,\cdot),y_\epsilon-y_\eta\rangle ds\\
	&\quad+\displaystyle\int_0^t\langle  -\dfrac{1}{\epsilon}[(y_\epsilon-\psi)^-]^{\tilde{q}-1}+\dfrac{1}{\eta}[(y_\eta-\psi)^-]^{\tilde{q}-1}, y_\epsilon-y_\eta \rangle ds= \displaystyle\int_0^t\displaystyle\int_0^t (\widetilde{G}(y_\epsilon,\cdot)\phi-\widetilde{G}(y_\eta,\cdot)\phi, y_\epsilon-y_\eta) ds	\end{align*}
	We argue as in the proof of	Lemma \ref{uniq-Ske} with $f_1=f_2$ and note that we need only to discuss  the penalization  term.	By	using the monotonicity of  the penalization  operator,	arguments	already	detailed	in	the	proof	of	\cite[Lemma	4]{YV22} lead	to
		$$ \displaystyle\sup_{t\in[0,T]}\Vert (y_\epsilon-y_\eta)(t)\Vert_H^2 \leq C(\epsilon+\epsilon^{\frac{1}{p-1}})+C\displaystyle\int_0^T\sup_{\tau\in[0,s]}\Vert (y_\epsilon-y_\eta)(\tau)\Vert_H^2 ds$$
	and  Gr\"onwall's  lemma ensures that $(y_\epsilon)_{\epsilon>0}$ is a Cauchy sequence in the space $C([0,T];H)$. 
\end{proof}

\subsubsection{Existence	of	solution}\label{limit-passage-pen}
\begin{lemma}\label{Lem2.4}
	There exist $y \in L^p(0,T;V)\cap  C([0,T];H) $ and $ (\rho,\chi ) \in L^{\tilde{q}^\prime}(0,T;L^{\tilde{q}^\prime}(D))\times L^{p^\prime}(0,T;V^\prime)$ such that the following convergences hold, up to sub-sequences denoted by the same way,
	\begin{align}
	y_\epsilon &\rightharpoonup y \quad \text{in} \quad  L^p(0,T;V),\label{3-8-Sk}\\
	y_\epsilon &\rightarrow y  \quad \text{in} \quad  C([0,T];H),\label{4-Sk}\\ 
	A(y_\epsilon,\cdot) &\rightharpoonup \chi  \quad \text{in} \quad L^{p^\prime}(0,T;V^\prime),\label{5-Sk}\\
	-\dfrac{1}{\epsilon}[(y_\epsilon-\psi)^-]^{\tilde{q}-1} &\rightharpoonup \rho,\quad \rho \leq 0 \quad \text{in} \quad L^{\tilde{q}^\prime}([0,T]\times D).\label{7-Sk}
	\end{align}
\end{lemma}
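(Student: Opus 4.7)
The plan is to recognize that Lemma \ref{Lem2.4} is essentially a compactness statement: all four convergences come for free from the uniform estimates already at our disposal, and the only real task is to make sure the four limits are mutually consistent (i.e.\ that the $y$ appearing in \eqref{3-8-Sk} coincides with the $y$ appearing in \eqref{4-Sk}, etc.).

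First, since $p\in(1,\infty)$ and $V$ is reflexive, the space $L^p(0,T;V)$ is reflexive. The bound from Lemma \ref{lem1}(1) therefore yields, up to a non-relabeled subsequence, a weak limit $y\in L^p(0,T;V)$ satisfying \eqref{3-8-Sk}. Likewise, $L^{p^\prime}(0,T;V^\prime)$ is reflexive and Lemma \ref{lem1}(2) gives some $\chi\in L^{p^\prime}(0,T;V^\prime)$ with $A(y_\epsilon,\cdot)\rightharpoonup \chi$, proving \eqref{5-Sk}.

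Next, Lemma \ref{lem3} says that $(y_\epsilon)_\epsilon$ is Cauchy in the Banach space $C([0,T];H)$, so it converges strongly to some $\tilde{y}\in C([0,T];H)$. To identify $\tilde y$ with the $y$ from \eqref{3-8-Sk}, I would use the continuous embedding $L^p(0,T;V)\hookrightarrow L^2(0,T;H)$, which is licit since $p>\max(1,\tfrac{2d}{d+2})$ ensures $V\hookrightarrow H$. Then $y_\epsilon\rightharpoonup y$ in $L^2(0,T;H)$ by composition, while strong convergence in $C([0,T];H)$ implies weak convergence in $L^2(0,T;H)$ toward $\tilde y$; by uniqueness of the weak limit, $y=\tilde y$ a.e., which gives \eqref{4-Sk}.

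Finally, since $\tilde q=\min(2,p)>1$, the space $L^{\tilde q^\prime}([0,T]\times D)$ is reflexive, so the bound from Lemma \ref{lem2-LDP} produces, along a further subsequence, a weak limit $\rho\in L^{\tilde q^\prime}([0,T]\times D)$. The sign constraint follows from the fact that the pre-limit sequence $-\tfrac{1}{\epsilon}[(y_\epsilon-\psi)^-]^{\tilde q-1}$ is pointwise non-positive, and the closed convex cone of non-positive functions in $L^{\tilde q^\prime}$ is weakly closed (being the intersection of half-spaces defined by testing against non-negative functions in $L^{\tilde q}$); hence $\rho\le 0$, completing \eqref{7-Sk}. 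There is no genuine obstacle here: the only point that is not purely mechanical is the identification of the common limit $y$ in \eqref{3-8-Sk} and \eqref{4-Sk}, and that is settled by the embedding argument above.
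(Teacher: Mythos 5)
Your approach is essentially the paper's: weak (sub)sequential compactness in the reflexive spaces $L^p(0,T;V)$, $L^{p'}(0,T;V')$ and $L^{\tilde q'}([0,T]\times D)$ from the bounds of Lemmas \ref{lem1} and \ref{lem2-LDP}, the Cauchy property in $C([0,T];H)$ from Lemma \ref{lem3}, and the weak closedness of the non-positive cone (closed convex set) for the sign of $\rho$.

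There is, however, a small slip in the step where you identify the weak limit $y$ of \eqref{3-8-Sk} with the strong limit $\tilde y$ of \eqref{4-Sk}. You invoke the embedding $L^p(0,T;V)\hookrightarrow L^2(0,T;H)$ and justify it only by $V\hookrightarrow H$. But $V\hookrightarrow H$ gives $L^p(0,T;V)\hookrightarrow L^p(0,T;H)$; passing from exponent $p$ to exponent $2$ in the time variable additionally requires $p\geq 2$, since $[0,T]$ is bounded. Under the standing hypothesis $\max(1,\tfrac{2d}{d+2})<p<\infty$ the value $p<2$ is allowed (indeed the whole $\tilde q=\min(2,p)$ machinery exists precisely to cover that case), so the embedding you invoke can fail. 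The fix is immediate: do the identification in $L^{\tilde q}(0,T;H)$ (or simply $L^p(0,T;H)$), where both the weak limit $y$ and the strong limit $\tilde y$ land continuously, and conclude $y=\tilde y$ a.e.\ by uniqueness of weak limits exactly as you propose. With that correction the argument is complete and coincides with the paper's.
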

\begin{proof}
	By  compactness  with respect to the weak topology in the spaces $ L^p(0,T;V)$, $L^{p^\prime}(0,T;V^\prime)$ and $L^{\tilde{q}^\prime}([0,T]\times D)$, there exist $y \in L^p(0,T;V)$, $\chi \in L^{p^\prime}(0,T;V^\prime)$ and $\rho \in L^{\tilde{q}^\prime}([0,T]\times D) $ such that $\eqref{3-8-Sk}$, $\eqref{5-Sk}$ and \eqref{7-Sk}  hold (for  sub-sequences). 
	Thanks to  Lemma \ref{lem3}, we get   the strong convergence of $y_\epsilon$ to $y$ in $ C([0,T];H) \hookrightarrow L^2([0,T]\times D)$.
	Moreover, 
			   $\rho \leq 0$ since the set of non positive functions of $L^{\tilde{q}^\prime}([0,T]\times D)$ is a closed convex subset of $L^{\tilde{q}^\prime}([0,T]\times D)$.
\end{proof}
Concerning	the	initial condition and constraint,	we	get
	\begin{itemize}
		\item Lemma	\ref{lem3}	ensures	that  $y_\epsilon(0)=u_0$ converges to $ y(0)$ in $ H$ and $y(0)=u_0$ in $H$.
		\item Thanks   to Lemma \ref{lem2-LDP}	and	Lemma	\ref{lem3}, we deduce that  $(y_\epsilon-\psi)^-\rightarrow (y-\psi)^-=0$ in $L^{\tilde{q}}([0,T]\times D)$ and $y\geq \psi$	a.e.
	\end{itemize}

\begin{lemma}\label{lem6} 
	$\widetilde{G}(\cdot, y_\epsilon)\phi$	converges	to	$\widetilde{G}(\cdot, y)\phi=G(\cdot, y)\phi$ in $ L^2(0,T;H)$, as $\epsilon\rightarrow 0$.
\end{lemma}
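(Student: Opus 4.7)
The plan is to reduce the convergence to a direct Lipschitz estimate using $H_{3,1}$ and the strong convergence already established in Lemma \ref{Lem2.4}. The key preliminary observation is that, since Lemma \ref{Lem2.4} ensures $y \geq \psi$ a.e., one has $\max(y,\psi) = y$ and therefore $\widetilde{G}(\cdot,y)\phi = G(\cdot,\psi \vee y)\phi = G(\cdot,y)\phi$, which identifies the candidate limit. It then remains to show that $\widetilde{G}(\cdot,y_\epsilon)\phi$ converges to $\widetilde{G}(\cdot,y)\phi$ in $L^2(0,T;H)$.

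First, I would exploit the $1$-Lipschitz property of the pointwise maximum: for each $t$,
\begin{equation*}
\|\max(y_\epsilon(t),\psi(t)) - \max(y(t),\psi(t))\|_H \leq \|y_\epsilon(t) - y(t)\|_H,
\end{equation*}
so the strong convergence $y_\epsilon \to y$ in $C([0,T];H)$ from Lemma \ref{lem3} transfers to the truncated sequence $\max(y_\epsilon,\psi) \to y$ uniformly in $t$ in the norm of $H$. Applying $H_{3,1}$ to the pair $(\max(y_\epsilon(t),\psi(t)), y(t))$ then yields
\begin{equation*}
\|\widetilde{G}(\cdot,y_\epsilon)(t) - G(\cdot,y)(t)\|_{L_2(H_0,H)}^2 \leq M\|\max(y_\epsilon(t),\psi(t)) - y(t)\|_H^2 \leq M\|y_\epsilon(t) - y(t)\|_H^2.
\end{equation*}

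Next, using the Hilbert–Schmidt inequality $\|T\phi\|_H \leq \|T\|_{L_2(H_0,H)}\|\phi\|_{H_0}$, squaring and integrating in time gives
\begin{equation*}
\int_0^T \|[\widetilde{G}(\cdot,y_\epsilon) - G(\cdot,y)]\phi\|_H^2\,ds \leq M\,\sup_{t\in[0,T]}\|y_\epsilon(t) - y(t)\|_H^2 \int_0^T \|\phi(s)\|_{H_0}^2\,ds,
\end{equation*}
and the right-hand side tends to $0$ as $\epsilon \to 0$ by Lemma \ref{lem3} together with $\phi \in L^2(0,T;H_0)$ (indeed, even $\phi \in L^\infty(0,T;H_0)$ under $H^*$). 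This establishes the claimed $L^2(0,T;H)$-convergence. There is no significant obstacle here: the argument is just the composition of the uniform-in-$t$ strong convergence with the Lipschitz bound on $G$ and the contractivity of $\max(\cdot,\psi)$, all of which are at hand.
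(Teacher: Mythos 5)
Your proof is correct and follows essentially the same route as the paper: apply $H_{3,1}$ (inherited by $\widetilde{G}$ via the $1$-Lipschitz property of $\max(\cdot,\psi)$) together with the strong convergence $y_\epsilon \to y$ in $C([0,T];H)$ from Lemma \ref{lem3}, and identify $\widetilde{G}(\cdot,y)\phi = G(\cdot,y)\phi$ from $y\geq\psi$. The only cosmetic difference is that the paper bounds $\|\phi\|_{H_0}$ by $C_\phi$ (using $H^*$) and keeps $\int_0^T\|y_\epsilon-y\|_H^2\,dt$, whereas you pull out $\sup_t\|y_\epsilon-y\|_H^2$ and keep $\int_0^T\|\phi\|_{H_0}^2\,dt$; both are equivalent.
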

\begin{proof}
We	have
	\begin{align*}
\int_0^T	\Vert	\widetilde{G}(\cdot, y_\epsilon)\phi-\widetilde{G}(\cdot, y)\phi\Vert_H^2ds&\leq	\int_0^T\Vert	\widetilde{G}(\cdot, y_\epsilon)-\widetilde{G}(\cdot, y)\Vert_{L_2(H_0,H)}^2\Vert\phi\Vert_{H_0}^2dt	\\
&\leq	MC_\phi^2\int_0^T\Vert	y_\epsilon-y\Vert_{H}^2dt\to	0.
	\end{align*}
$\widetilde{G}(\cdot, y)\phi=G(\cdot, y)\phi$	is	a	consequence	of	$y\geq	\psi	$.

\end{proof}

\begin{lemma}\label{lem7}
	$\rho(y-\psi)=0$  a.e. in $[0,T]\times	D$ and for	any	$ v\in L^p(0,T;V);	v\geq	\psi$, $\rho(y-v) \geq 0$ a.e. in 	$[0,T]\times	D.$
\end{lemma}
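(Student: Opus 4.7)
My plan is to dispatch the two assertions in order: first the complementarity identity $\rho(y-\psi)=0$ a.e., and then deduce the pointwise inequality $\rho(y-v)\geq 0$ from it.

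For the complementarity, I will combine the sign constraints available on $\rho$ and $y-\psi$ with a weak/strong passage to the limit. The pointwise algebraic identity at the $\epsilon$-level reads
\begin{align*}
-\frac{1}{\epsilon}[(y_\epsilon-\psi)^-]^{\tilde{q}-1}(y_\epsilon-\psi)=\frac{1}{\epsilon}[(y_\epsilon-\psi)^-]^{\tilde q}\geq 0 \quad \text{a.e.\ in } [0,T]\times D,
\end{align*}
so the integral of the left-hand side is non-negative for every $\epsilon$. To carry this non-negativity into the limit I pair the weak convergence \eqref{7-Sk} of $\rho_\epsilon:=-\tfrac{1}{\epsilon}[(y_\epsilon-\psi)^-]^{\tilde q-1}$ in $L^{\tilde q'}([0,T]\times D)$ with the strong convergence of $y_\epsilon-\psi$ to $y-\psi$ in $L^{\tilde q}([0,T]\times D)$; the latter follows from \eqref{4-Sk} via $C([0,T];H)\hookrightarrow L^2([0,T]\times D)\hookrightarrow L^{\tilde q}([0,T]\times D)$, which is valid because $D$ is bounded and $\tilde q=\min(2,p)\leq 2$. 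This yields $\int_0^T\!\!\int_D \rho(y-\psi)\,dx\,ds\geq 0$. On the other hand Lemma~\ref{Lem2.4} gives $\rho\leq 0$ and $y\geq \psi$ a.e., so the integrand is pointwise non-positive. The two pieces are compatible only if $\rho(y-\psi)=0$ a.e.

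For the pointwise inequality, fix $v\in L^p(0,T;V)$ with $v\geq\psi$. The complementarity from the first step says that, up to a Lebesgue-null set, $\{\rho<0\}\subset\{y=\psi\}$. On $\{\rho=0\}$ the product $\rho(y-v)$ is trivially zero, while on $\{\rho<0\}\cap\{y=\psi\}$ one has $y-v=\psi-v\leq 0$ together with $\rho<0$, giving $\rho(y-v)\geq 0$. Combining the two cases yields $\rho(y-v)\geq 0$ a.e.\ in $[0,T]\times D$.

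The only non-routine point in the argument is the exponent matching in the weak/strong pairing. The choice $\tilde q=\min(2,p)$ was arranged precisely so that the strong $C([0,T];H)$-convergence provided by Lemma~\ref{lem3} is compatible with the weak $L^{\tilde q'}$-convergence of the penalization terms established in Lemma~\ref{lem2-LDP}; once this compatibility is recognised, no further obstacle arises.
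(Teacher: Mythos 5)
Your proof is correct and takes essentially the same route as the paper: a weak--strong pairing passes the $\epsilon$-level integral $\int_0^T\!\int_D\rho_\epsilon(y_\epsilon-\psi)\,dx\,ds$ to the limit $\int_0^T\!\int_D\rho(y-\psi)\,dx\,ds$, which is then forced to vanish by the fixed sign of the integrand, and the pointwise inequality follows from the complementarity. The only cosmetic differences are that the paper invokes the explicit rate $\tfrac1\epsilon\int_0^T\|(y_\epsilon-\psi)^-\|_{L^{\tilde q}(D)}^{\tilde q}\,ds\leq C\epsilon^{\tilde q'-1}\to 0$ where you use mere non-negativity of the pairing, and it uses the algebraic split $\rho(y-v)=\rho(y-\psi)+\rho(\psi-v)$ where you argue by cases on $\{\rho=0\}$ and $\{\rho<0\}$.
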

\begin{proof}
	On one hand, by Lemma \ref{lem2-LDP}, we have 
	$$  0\leq -\dfrac{1}{\epsilon} \displaystyle\int_0^t \langle [(y_\epsilon-\psi)^-]^{\tilde{q}-1}, y_\epsilon-\psi\rangle ds=\dfrac{1}{\epsilon} \displaystyle\int_0^t \Vert (y_\epsilon-\psi)^-(s)\Vert_{L^{\tilde{q}}}^{\tilde{q}}ds \leq C\epsilon^{\tilde{q}^\prime-1}\rightarrow 0.$$
	On the other hand, by Lemma \ref{Lem2.4}, we distinguish two cases:
	\begin{itemize}
		\item If $p\geq 2$ then $-\dfrac{1}{\epsilon}(y_\epsilon-\psi)^- \rightharpoonup  \rho$ in $L^2([0,T]\times D)$ and $ y_\epsilon-\psi \rightarrow y-\psi$ in $L^2([0,T]\times D)$   by  Lemma \ref{lem3}. Hence $ \displaystyle\int_0^T\hspace*{-0.3cm} \int_D\rho(y-\psi)dxdt=0$ and $\rho(y-\psi)=0$ a.e.	in	$[0,T]\times	D$,	since the integrand is always non-positive.
		\item If $ 2>p>1 $ then $-\dfrac{1}{\epsilon}[(y_\epsilon-\psi)^-]^{p-1} \rightharpoonup  \rho$ in $L^{p^\prime}([0,T]\times D)$ and $ y_\epsilon-\psi \rightarrow y-\psi$ in $L^p([0,T]\times D)$   by  Lemma \ref{lem3}  and the same conclusion holds.
	\end{itemize}
	One finishes  the proof by noticing that if $ v\in L^p(0,T;V);	v\geq	\psi$,  one has a.e. in $[0,T]\times	D$ that, 
	$$ \rho(y-v)  = \overbrace{\rho(y-\psi) }^{=0}+\overbrace{\rho(\psi-v) }^{\geq 0}\geq 0.$$
\end{proof}
Our aim now is to prove that $A(y,\cdot)=\chi$.  For any $t\in [0,T]$,	we have 
$$  y_\epsilon(t)-y(t)+\displaystyle\int_0^t[(A(y_\epsilon,\cdot) -\chi)+(-\dfrac{1}{\epsilon}[(y_\epsilon-\psi)^-]^{\tilde{q}-1}-\rho)]ds=\displaystyle\int_0^t[G(y_\epsilon,\cdot)-G(y,\cdot)]\phi	ds	\text{ in }		V^\prime.$$
By	using	
	 $y_\epsilon-y$	as	a		test	function	and	integrate	from	$0$	to	$t$,		we	obtain
\begin{align*}
\dfrac{1}{2}\Vert &(y_\epsilon-y)(t)\Vert_H^2+\overbrace{\displaystyle\int_0^t \langle A(y_\epsilon,\cdot) -\chi, y_\epsilon-y\rangle ds}^{I_1} +\overbrace{\displaystyle\int_0^t \langle -\dfrac{1}{\epsilon}[(y_\epsilon-\psi)^-]^{\tilde{q}-1}-\rho, y_\epsilon-y\rangle ds}^{I_2}\\
&=\overbrace{\displaystyle\int_0^t \langle (G(y_\epsilon,\cdot)-G(y,\cdot)) \phi, y_\epsilon-y\rangle ds}^{I_3} 
\end{align*}
Let  $v\in L^p(0,T;V)\cap C([0,T];H)$ and $t\in ]0,T]$.	Note the	following:
\begin{itemize}
	\item   $I_1=\displaystyle\int_0^t \langle A(y_\epsilon,\cdot), y_\epsilon\rangle ds-\displaystyle\int_0^t \langle A(y_\epsilon,\cdot), y\rangle ds-\displaystyle\int_0^t \langle \chi, y_\epsilon-y\rangle ds$ and  
	\begin{align*}
&	\displaystyle\int_0^t \langle A(y_\epsilon,\cdot), y_\epsilon\rangle ds= \displaystyle\int_0^t \langle A(y_\epsilon,\cdot)-A(v,\cdot), y_\epsilon-v\rangle ds+ \displaystyle\int_0^t \langle A(v,\cdot), y_\epsilon-v\rangle ds +\displaystyle\int_0^t \langle A(y_\epsilon,\cdot),v\rangle ds\\
&	(\lambda_T Id+A \text{ is T-monotone} )\quad \geq \displaystyle\int_0^t \langle A(v,\cdot), y_\epsilon-v\rangle ds +\displaystyle\int_0^t \langle A(y_\epsilon,\cdot),v\rangle ds-\lambda_T\int_0^t\Vert y_\epsilon -v \Vert_H^2ds.
	\end{align*}
	
	\item  We	have	$ \displaystyle\int_0^t \langle -\dfrac{1}{\epsilon}[(y_\epsilon-\psi)^-]^{\tilde{q}-1}, y_\epsilon-y\rangle ds \geq  \displaystyle\int_0^t \langle -\dfrac{1}{\epsilon}[(y_\epsilon-\psi)^-]^{\tilde{q}-1}, \psi-y\rangle ds.$
	\end{itemize}
 Thanks to $H_3$ we have $\vert	I_3\vert\leq MC_\phi^2\displaystyle\int_0^t\Vert y_\epsilon(s)-y(s)\Vert_H^2ds .$	Thus,	we	are	able	to	infer
\begin{align*}
\dfrac{1}{2}\Vert (y_\epsilon-y)(t)\Vert_H^2+ \displaystyle\int_0^t \langle A(v,\cdot), y_\epsilon-v\rangle ds +\displaystyle\int_0^t \langle A(y_\epsilon,\cdot),v-y\rangle ds-\displaystyle\int_0^t \langle \chi, y_\epsilon-y\rangle ds \\+\displaystyle\int_0^t \langle -\dfrac{1}{\epsilon}[(y_\epsilon-\psi)^-]^{\tilde{q}-1}, \psi-y\rangle ds-\displaystyle\int_0^t \langle \rho, y_\epsilon-y\rangle ds \leq (MC_\phi^2+\lambda_T)\displaystyle\int_0^t\Vert y_\epsilon(s)-y(s)\Vert_H^2ds.
\end{align*}
By setting $t=T$
and	 passing to the  limit as $\epsilon \rightarrow 0$,  thanks to Lemmas \ref{Lem2.4} and \ref{lem7},  we get  
$$ \displaystyle\int_0^T\langle A(v,\cdot)-\chi, y-v\rangle ds  \leq  \displaystyle\int_0^T\langle \rho, y-\psi\rangle ds=0. $$
We are now in a position  to use "Minty's trick" \cite[Lemma 2.13 p.35]{Roubicek} and  deduce that $A(y,\cdot)=\chi$.

\subsubsection{Lewy-Stampacchia's	inequality}\label{subsection-LS}
In	order	to	consider	more	general	setting,	one	needs	to	estimate		$\rho$	in	satisfactory	way.	Hence,		we	will	prove	Lewy-Stampacchia's	inequality	estimate,	which	gives		lower	and	upper	bounds	to	$\rho$	in	some	dual	sense,	where	the	dual	order	assumption	in	$H_5$	is	crucial.
First,	note	that	$\dfrac{dy}{dt}+A(y,\cdot)-G(y,\cdot)\phi-f=-\rho\geq	0.$
	Moreover,	we	have					
 \begin{align}\label{LS-regular}
 0\leq\dfrac{dy}{dt}+A(y,\cdot)-G(y,\cdot)\phi-f\leq	h^-	\text{ in }	L^{\tilde{q}^\prime}([0,T]\times	D)-sense.
  \end{align}
 	Indeed,	let $y$ be the unique  solution given	at	the	end	of		Subsection	\ref{limit-passage-pen}.	 Denote by $K_1$ the closed convex  set   $K_1=\{v\in L^p(0,T;V) , \quad v\leq y \quad \text{a.e. in }D\times [0,T]\}.$
  	We recall that $y$ satisfies  
 	$$(f+h^-)-\dfrac{dy}{dt}- A(y,\cdot)+ G(y,\cdot)\phi=h^-+\rho, \quad \rho \leq 0, \quad \rho \in L^{\tilde{q}^\prime}([0,T]\times D).$$ 
 	Consider the following auxiliary problem: $(z,\nu)\in L^p(0,T;V) \times L^{\tilde{q}^\prime}(0,T;L^{\tilde{q}^\prime}(D))$ such that  
 	\begin{align}\label{8}
 	\left\{ \begin{array}{l}
 	i.)\quad  z \in C([0,T];H),\quad z(0)=u_0 \quad \text{and } \quad  z \in K_1, 
 	\\[0.2cm]
 	ii.)\quad  \nu \geq 0, \quad \langle \nu, z-y\rangle=0 \text{ a.e. in }[0,T]\text{ and }\forall v\in K_1, \ \langle \nu, z-v\rangle\geq 0   \text{ a.e. in }[0,T].
 	\\[0.2cm] 
 	iii.) \quad \text{For any } t\in[0,T] :\\
 	z(t)+\displaystyle\int_0^t \nu ds+\displaystyle\int_0^t A(z,\cdot)ds=u_0+\displaystyle\int_0^t G(z,\cdot)\phi	ds+\displaystyle\int_0^t (f+h^-) ds.
 	\end{array}
 	\right.
 	\end{align}
 	
 	Note that the result of existence and uniqueness of the solution $(z,\nu)$	to	\eqref{8} can be proved,
 	 	  by cosmetic changes of what has been done in the		Subsections	\ref{subs-pen}-\ref{limit-passage-pen},  by passing to the limit in the following penalized problem: 
 	\begin{align*}
 	\left\{ \begin{array}{l}
 	z_\epsilon(t)+\displaystyle\int_0^t(A(z_\epsilon,\cdot)+\dfrac{1}{\epsilon}[(z_\epsilon-y)^+]^{\tilde{q}-1}-(f+h^-))ds=u_0+\displaystyle\int_0^t\widetilde{G}(z_\epsilon,\cdot)\phi	ds \\[1.2ex]
 	z_\epsilon(0)=u_0,
 	\end{array}
 	\right.
 	\end{align*}
 	where $\widetilde{G}(z_\epsilon,\cdot)=G(\min(z_\epsilon,y),\cdot).$
 	Moreover, $$  \dfrac{dz}{dt}+ A(z,\cdot)- G(z,\cdot)\phi-(f+h^-)=-\nu \leq 0\quad \text{ in} \quad  L^{\tilde{q}^\prime}([0,T]\times D)$$ and $z$ satisfies the following Lewy-Stampacchia inequality:
 	$$ \dfrac{dz}{dt}+ A(z,\cdot)- G(z,\cdot)\phi-f \leq h^- \quad \text{ in} \quad  L^{\tilde{q}^\prime}([0,T]\times D). $$
 	\\
 	Since	the	solution	of	\eqref{Skeleton}	in	Theorem	\ref{THm-approx-1}	is	unique,	the	proof	of		Lewy-Stampacchia's	inequality	\eqref{LS-regular}	follows	by	using	the	same	arguments	presented		in	\cite[Subsection	3.2]{YV22},	by	showing	that	$z=y$.
 
	\subsection{Well-posedeness	of	\eqref{Skeleton}	with general data}\label{subs-sket-gle}	We 	will	proceed in two steps.
\subsubsection{The	case	$\phi\in	L^2(0,T;H_0)$}\label{remove-phi}
Assume	only	in	this	part	that	$H^{**}$	holds.
Let $m\in\mathbb{N}$,	let $\phi_m\in L^\infty([0,T];H_0)$\footnote{	$(\phi_m)_m$	can	be	constructed		\textit{e.g.}	by	using	a	standard	cut-off	techniques.} such that $\phi_m \to \phi $ in $L^2([0,T];H_0)$.	Following	Theorem	\ref{THm-approx-1},	
 there exists a unique $(y_m,\rho_m)\in L^p(0,T;V) \times L^{\tilde{q}^\prime}(0,T;L^{\tilde{q}^\prime}(D))$  satisfying: 
\begin{itemize}
	\item  $y_m\in  C([0,T],H)$,	$y_m\geq		\psi$ and $\rho_m \leq 0$.
	\item For any $t\in[0,T]$: $ y_m(t)+\displaystyle\int_0^t(A(y_m,\cdot)+\rho_m)ds=u_0+\displaystyle\int_0^t G(y_m,\cdot)\phi_m+\int_0^tfds$	in	$V^\prime$.
	\item $\langle \rho_m, y_m-\psi\rangle=0$ a.e.	in	$(0,T)$ and $\forall	v\in L^p(0,T;V);	v\geq	\psi$,  $\langle \rho_m, y_m-v\rangle\geq 0$ a.e. in 	$(0,T)$.
	\item	The following  Lewy-Stampacchia inequality holds: $0\leq  -\rho_m \leq h^-.$
\end{itemize}
We	will	to	pass	to	the	limit	as	$m\to	+\infty$.	The	first	step	is	to	obtain	a	uniform	estimate	independent	of	$m$,	similar to the one developed in Subsubsection	\ref{sub-estim-pen}.	Next,	we	conclude	by	using	the	same	arguments	of	Subsubsection	\ref{limit-passage-pen}.\\

We	have
$\dfrac{dy_m}{dt}+A(y_m,\cdot)-G(\cdot,y_m)\phi_m-\rho_m
=f.
$	Let	$0\leq	t\leq	T$,
by using $y_m$ as a test function and integrating in time from $0$ to $t$, we get
\begin{align*}
\|y_m(t)\|_{H}^2+2\displaystyle\int_0^t \langle A(y_m,\cdot),y_m\rangle ds &-2\int_0^t(G(\cdot,y_m)\phi_m,y_m)ds-2\int_0^t \int_D\rho_my_mdxds\\
&\qquad = \|u_0\|^2_{H}+\int_0^t\langle	f,y_m\rangle	ds.
\end{align*}
Note that,	by	using	Lewy-Stampacchia	 	and	Young	inequalities,	for	any	$\gamma>0$
\begin{align*}
2\vert\int_0^t \int_D\rho_my_mdxds\vert&\leq
2\int_0^t \!\!\!\int_D\vert\rho_m\vert	\vert	y_m\vert	dxds \\&\leq	\gamma\Vert	y_m\Vert_{L^{\tilde{q}}([0,t]\times	D)}^{\tilde{q}}+C_\gamma\Vert	h^-\Vert_{L^{\tilde{q}^\prime}([0,T]\times	D)}^{\tilde{q}^\prime}\leq	C_\gamma+\gamma\Vert	y_m\Vert_{L^{\tilde{q}}([0,t]\times	D)}^{\tilde{q}}.	\end{align*}
By	using	$H_3$,	one	has
$
	2\displaystyle\int_0^t(G(\cdot,y_m)\phi_m,y_m)ds
		\leq	Lt+\int_0^t(\Vert	\phi_m\Vert_{H_0}^2+L)\Vert	y_m\Vert_H^2ds.
$
 Thus, arguments	already	detailed	in	the	proof	of	Lemma	\ref{lem1} yield the	existence	of	$C>0$,	independent	of	$m$,	such	that
\begin{align*}
&\|y_m(t)\|_{H}^2+\alpha\displaystyle \int_0^t  \Vert y_m(s)\Vert_V^p  ds  \leq 
C \int_0^t(1+\Vert \phi_m\Vert_{H_0}^2) \Vert y_m(s) \Vert_H^2  ds+ \frac{\alpha}{2}\int_0^t\|y_m(s)\|^p_V ds+ C,
\end{align*}
Since	$(\phi_m)_m$	is	bounded	in	$L^2([0,T];H_0)$,	we	obtain
\begin{lemma}\label{lem1-m}
	\begin{itemize}
		\item $(y_m)_{m}$ is bounded in  $ L^p(0,T;V)\cap C([0,T];H)$.
		\item $(A(y_m,\cdot))_{m}$ is bounded in $L^{p^\prime}(0,T;V^\prime)$	and 	$(\rho_m)_m$	is	bounded	in	$L^{\tilde{q}^\prime}([0,T]\times D).$			\end{itemize}
	\end{lemma}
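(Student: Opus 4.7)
\textbf{Proof plan for Lemma \ref{lem1-m}.} The plan is to close the estimate already derived immediately above the lemma statement, then read off bounds for $A(y_m,\cdot)$ and $\rho_m$ from the hypotheses. First, starting from
\[
\|y_m(t)\|_{H}^2+\alpha\int_0^t\|y_m(s)\|_V^p\,ds\leq C\int_0^t(1+\|\phi_m(s)\|_{H_0}^2)\|y_m(s)\|_H^2\,ds+\tfrac{\alpha}{2}\int_0^t\|y_m(s)\|_V^p\,ds+C,
\]
I absorb $\tfrac{\alpha}{2}\int_0^t\|y_m\|_V^p\,ds$ into the left-hand side. Since $\phi_m\to\phi$ in $L^2(0,T;H_0)$, the sequence $\|\phi_m\|_{H_0}^2$ is bounded in $L^1(0,T)$ uniformly in $m$, so the weight $s\mapsto 1+\|\phi_m(s)\|_{H_0}^2$ has $L^1$-norm bounded by a constant independent of $m$. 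Applying Gronwall's lemma to $t\mapsto\|y_m(t)\|_H^2$ then yields $\sup_{t\in[0,T]}\|y_m(t)\|_H^2\leq C$ uniformly in $m$; plugging this back into the inequality gives $\int_0^T\|y_m(s)\|_V^p\,ds\leq C$ as well. Since $y_m\in C([0,T];H)$ by Theorem~\ref{THm-approx-1}, this establishes the first assertion.

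For the second assertion, I invoke the growth hypothesis $H_{2,3}$: $\|A(y_m(s),s)\|_{V'}\leq \bar{K}\|y_m(s)\|_V^{p-1}+g(s)$ a.e. Raising to the power $p'$, integrating over $[0,T]$, and using that $g\in L^\infty([0,T])\subset L^{p'}([0,T])$ together with the uniform $L^p(0,T;V)$-bound just obtained, I conclude $(A(y_m,\cdot))_m$ is bounded in $L^{p'}(0,T;V')$. For the third assertion, the Lewy--Stampacchia inequality in the construction of $y_m$ gives $0\leq -\rho_m\leq h^-$ pointwise a.e. in $[0,T]\times D$, and under $H^{**}$ one has $h^-\in L^{\tilde q'}([0,T]\times D)$, which directly yields $\|\rho_m\|_{L^{\tilde q'}([0,T]\times D)}\leq \|h^-\|_{L^{\tilde q'}([0,T]\times D)}$ uniformly in $m$.

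There is no genuine obstacle here; the only point worth care is that the Gronwall argument uses the $L^1$-integrability (not boundedness) of $\|\phi_m\|_{H_0}^2$, which is why the strong convergence $\phi_m\to\phi$ in $L^2(0,T;H_0)$, rather than the pointwise-in-$t$ bound available in the regular-data case $H^*$, suffices. The $L^{\tilde q'}$-bound on $\rho_m$ coming from Lewy--Stampacchia is essential because the pointwise penalization estimate of Lemma~\ref{lem2-LDP} would otherwise not be available after one passes from $\phi_m$ to a general $\phi\in L^2(0,T;H_0)$.
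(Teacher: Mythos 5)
Your proof is correct and follows essentially the same route as the paper: absorb the $\frac{\alpha}{2}\int\|y_m\|_V^p$ term, apply Gr\"onwall using that $\|\phi_m\|_{H_0}^2$ is uniformly bounded in $L^1(0,T)$ because $(\phi_m)_m$ is bounded in $L^2(0,T;H_0)$, then read off the $L^{p'}(0,T;V')$ bound for $A(y_m,\cdot)$ from $H_{2,3}$ and the $L^{\tilde q'}$ bound for $\rho_m$ from the Lewy--Stampacchia inequality $0\le-\rho_m\le h^-$ together with $H^{**}$. Your closing remark correctly identifies why the Lewy--Stampacchia bound (rather than the penalization estimate of Lemma~\ref{lem2-LDP}) is what carries the $\rho_m$ estimate once $\phi$ is only $L^2$ in time, which is exactly the paper's reason for organizing the approximation this way.
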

Similarly to the proof	of	Lemma	\ref{uniq-Ske}	with	$f_1=f_2$	and	using	that	$\phi_m$	converges	strongly	to	$\phi$	in	$L^2(0,T;H_0)$,	we	deduce
\begin{align*} 
	(u_m)_{m} 	\text{	is  a Cauchy sequence in the space in	}	C([0,T];H).
\end{align*}
Now,	we	are	in	position	to	use	the	same	arguments	as	in	the		Subsubsection	\ref{limit-passage-pen}	to	deduce	that	Theorem	\ref{THm-approx-1}	holds		with	$\phi\in	L^2([0,T];H_0)$.	  Lewy-Stampac\-chia inequality is a consequence of the passage to the limit in the one satisfied by $\rho_m$.
\subsubsection{The	general	case}\label{remove-h}	Let	$\phi\in	L^2([0,T];H_0)$
and $h^- \in (L^{p^\prime}(0,T;V^\prime))^+$. Thanks to Lemma \cite[Lemma	4.1]{GMYV}, there exists $h_n \in L^{\tilde{q}^\prime}(0,T; L^{\tilde{q}^\prime}(D))$ and	 non negative such that
$ h_n \longrightarrow h^-$ in	$L^{p^\prime}(0,T;V^\prime).$

Associated with $h_n$, denote the following $f_n$ by
$$
f_n=\dfrac{d\psi}{dt}+A(\psi,\cdot)+h^+-h_n, \quad h^+\in (L^{p^\prime}(0,T;V^\prime))^+ . 
$$
Note that $f_n \in L^{p^\prime}(0,T;V^\prime)$  and  converges strongly to $f$ in $ L^{p^\prime}(0,T;V^\prime)$.
Denote by $(y_n,k_n)$ the sequence of solutions given by Theorem \ref{THm-approx-1}	with	$\phi\in	L^2([0,T];H_0)$ where $h^-$ is replaced by $h_n$.
\\[0.1cm] 
By Lewy-Stampacchia inequality, one has $0\leq -k_n\leq h_n$. 
For any $\varphi \in L^p(0,T;V)$, it holds that
\begin{align*}
\displaystyle\int_0^T \vert \langle k_n, \varphi \rangle \vert ds &\leq \displaystyle\int_0^T \langle -k_n, \varphi^+ \rangle  ds+ \displaystyle\int_0^T  \langle - k_n, \varphi^- \rangle ds\\
&\leq  \displaystyle\int_0^T \langle h_n, \varphi^+ \rangle  ds+ \displaystyle\int_0^T  \langle h_n, \varphi^- \rangle ds \leq 2 \Vert h_n\Vert_{L^{p^\prime}(0,T;V^\prime)}\Vert \varphi\Vert_{L^{p}(0,T;V)}.
\end{align*}
Since $(h_n)_n$ converges to $h$ in $L^{p^\prime}(0,T;V^\prime),$ one gets that $(h_n)_n$ is bounded independently of $n$ in  $L^{p^\prime}(0,T;V^\prime)$ and therefore 
\begin{align}\label{bouded+Lagra}
		(k_n)_n \text{	is bounded independently of	} 	n \text{	in } L^{p^\prime}(0,T;V^\prime).
\end{align}

Let	$t\in [0,T]$	and $n\in \mathbb{N}^*$, by  using	  $y_n$	as	test	function	in	the	equations	satisfied	by	$y_n$, one gets  
\begin{align*}
\dfrac{1}{2}\Vert y_n(t) \Vert_H^2+\displaystyle\int_0^t\langle A(y_n,\cdot), y_n\rangle ds=\dfrac{1}{2}\Vert u_0 \Vert_H^2&+\displaystyle\int_0^t\langle -k_n, y_n\rangle ds+\displaystyle\int_0^t\langle f_n, y_n\rangle ds
+\displaystyle\int_0^t( G(y_n,\cdot)\phi, y_n)	ds 
\end{align*}
Since $f_n$ converges to $f$ in $L^{p^\prime}(0,T;V^\prime),$  it holds that $(f_n)_n$ is bounded independently of $n$ in  $L^{p^\prime}(0,T;V^\prime)$. Therefore, by Young's inequality,  we get $$ \displaystyle\int_0^T\vert\langle f_n-k_n, y_n\rangle\vert ds \leq \dfrac{\alpha}{2}\displaystyle\int_0^T\Vert y_n(s)\Vert_V^pds+C\Vert f_n-k_n\Vert_{L^{p^\prime}(0,T;V^\prime)}^{p^\prime}.$$
By using $H_3$, we get
$
(G(\cdot,y_n)\phi,y_n) \leq
(L+\Vert \phi\Vert_{H_0}^2)\Vert y_n\Vert_H^2+L.
$
Therefore
$$ \displaystyle\sup_{s\in [0,t]}\Vert y_n(s) \Vert_H^2+\displaystyle\int_0^t\Vert y_n(s)\Vert_V^pds \leq C(1+  \int_0^t(1+\Vert \phi\Vert_{H_0}^2) \sup_{\tau \in[0,s]}\Vert y_n(\tau) \Vert_H^2 ds).$$  
By using Gr\"onwall's lemma	and	$H_{2,3}$, one concludes  that 
\begin{align}\label{bound-gle}
(y_n)_n \text{ and }(A(y_n,\cdot))_n	\text{	are  bounded in 	}L^p(0,T;V)\cap C([0,T];H)	 	\text{	and 	}L^{p^\prime}(0,T;V^\prime),\text{	resp.}
\end{align}

Now, by	using	similar	arguments	to	the	proof	of			Lemma	\ref{uniq-Ske}	with	$\phi_1=\phi_2$,	and	that	$f_n $   converges strongly to $f$ in $ L^{p^\prime}(0,T;V^\prime)$,	we	get
	\begin{align}\label{Cauchy-gle}
	(y_n)_{n} \text{	is  a Cauchy sequence in the space 	}C([0,T];H)
	\end{align}
	
	Next,	by	using	the	same	arguments	as	in	the		Subsubsection	\ref{limit-passage-pen}	(see	also	\cite[Subsect.	3.3]{YV22}),		
	we deduce  Theorem \ref{THm-approx-1} for general $f$.  At last,  Lewy-Stampac\-chia inequality is a consequence of the passage to the limit in the one satisfied by $k_n$,	which	completes	the	proof	of	Proposition	\ref{Prop-Skeleton}.

	\section{Proof	of	large	deviation	principle}\label{section-LDP-proof}
	\subsection{Continuity  of  skeleton equations	with respect to the  signals}
\begin{lemma}\label{LDP-cd2}	Assume	that		($H_1$)-($H_6$)	hold.
Let	$N<\infty$,	for	any	family $\{ v^\delta: \delta>0\} \subset S_N $	satisfying	that	$v^\delta$	converges	weakly	to	some	element	$v$	as	$\delta\to	0$,	we	have
\begin{itemize}
	\item			$g^0(\int_0^\cdot v_s^\delta	ds)$	converges	to	$g^0(\int_0^\cdot v_s	ds)$	in	the	space	$C([0,T];H)$,
	\item	If	moreover	$H_7$	holds,	then	
	\begin{align*}
		g^0(\int_0^\cdot v_s^\delta	ds)	\text{	converges	to	}	g^0(\int_0^\cdot v_s	ds)	\text{	in	}		L^p(0,T;V)\cap	C([0,T];H),
	\end{align*}
	\end{itemize}
where	$g^0$	is	given	in	\eqref{g0-def}.	
		\end{lemma}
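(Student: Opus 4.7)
The plan is to adapt the limit-passage machinery of Subsections \ref{limit-passage-pen}-\ref{remove-h} to the case where the control varies along a sequence. Write $y^\delta=g^0(\int_0^\cdot v^\delta_s\,ds)$, $y=g^0(\int_0^\cdot v_s\,ds)$, and let $R^\delta$, $R$ denote the associated reflected measures. Testing the skeleton equation with $y^\delta$ itself and combining H$_{2,1}$, H$_{2,3}$, H$_3$ with the Lewy-Stampacchia bound $0\leq -R^\delta\leq h^-$ from Proposition \ref{Prop-Skeleton} and the uniform constraint $\int_0^T\Vert v^\delta\Vert_{H_0}^2\,ds\leq N$, I would first derive uniform bounds on $\{y^\delta\}$ in $L^p(0,T;V)\cap C([0,T];H)$, on $\{A(y^\delta,\cdot)\}$ and $\{R^\delta\}$ in $L^{p'}(0,T;V')$, and consequently on $\{\partial_t y^\delta\}$ in $L^{p'}(0,T;V')$. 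Extracting subsequences yields weak limits $\bar y$, $\bar\chi$, $\bar R$ in the respective spaces, and an Aubin-Lions-type argument coupled with the equicontinuity of $t\mapsto\Vert y^\delta(t)\Vert_H^2$ obtained from the energy identity upgrades this to $y^\delta\to\bar y$ in $C([0,T];H)$. The preserved properties $\bar y(0)=u_0$, $\bar y\geq\psi$ a.e., $\bar R\leq 0$ follow immediately.

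The delicate step is the nonlinear control term. Decomposing
\begin{align*}
G(\cdot,y^\delta)v^\delta-G(\cdot,\bar y)v=[G(\cdot,y^\delta)-G(\cdot,\bar y)]v^\delta+G(\cdot,\bar y)[v^\delta-v],
\end{align*}
the first summand vanishes in $L^2(0,T;H)$ by H$_{3,1}$, the strong $C([0,T];H)$ convergence of $y^\delta$, and the uniform $L^2$ bound on $v^\delta$; the second summand converges weakly in $L^2(0,T;H)$ because the linear map $\phi\mapsto G(\cdot,\bar y)\phi$ is bounded from $L^2(0,T;H_0)$ to $L^2(0,T;H)$ while $v^\delta\rightharpoonup v$ weakly, and in fact the time integral $\int_0^\cdot G(\cdot,\bar y)v^\delta\,ds$ converges in $C([0,T];H)$ by compactness. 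Passing to the limit in the integrated equation yields $\bar y(t)+\int_0^t(\bar\chi+\bar R)\,ds=u_0+\int_0^t(f+G(\bar y)v)\,ds$.

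To identify $\bar\chi=A(\bar y,\cdot)$ I would perform a Minty argument tailored to the obstacle. From the energy identity for $y^\delta$ at $t=T$ and the complementarity $\int_0^T\langle R^\delta,y^\delta-\psi\rangle\,ds=0$, one may replace $\int_0^T\langle R^\delta,y^\delta\rangle\,ds$ by the linear pairing $\int_0^T\langle R^\delta,\psi\rangle\,ds$, which passes to $\int_0^T\langle \bar R,\psi\rangle\,ds$ since $\psi\in L^p(0,T;V)$ is fixed and $R^\delta\rightharpoonup \bar R$. Comparing with the energy identity of $\bar y$, and using the strong convergence of $y^\delta(T)$ in $H$ together with the control-term convergences above, one obtains
\begin{align*}
\lim_{\delta\to 0}\int_0^T\langle A(y^\delta,\cdot),y^\delta\rangle\,ds=\int_0^T\langle \bar\chi,\bar y\rangle\,ds+\int_0^T\langle \bar R,\bar y-\psi\rangle\,ds\leq\int_0^T\langle \bar\chi,\bar y\rangle\,ds,
\end{align*}
using $\bar R\leq 0$ and $\bar y\geq\psi$. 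Combined with the monotonicity of $\lambda_T\mathrm{Id}+A$ (from H$_{2,2}$) and the hemicontinuity H$_{2,4}$, the standard Minty trick yields $\bar\chi=A(\bar y,\cdot)$; the equality case further forces $\int_0^T\langle \bar R,\bar y-\psi\rangle\,ds=0$. Hence $(\bar y,\bar R)$ solves \eqref{Skeleton} with control $v$, and by uniqueness (Lemma \ref{uniq-Ske}) $\bar y=y$. A standard subsequence-uniqueness argument promotes the convergence to the full family.

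For part (ii), under H$_7$ I would subtract the two skeleton equations, test with $y^\delta-y$, invoke the strong monotonicity, and observe $\int_0^T\langle R^\delta-R,y^\delta-y\rangle\,ds\geq 0$ from the complementarity of both pairs. The control term splits as in the second paragraph and is controlled by the uniform bound on $\Vert G(\cdot,y^\delta)\Vert_{L_2(H_0,H)}$, the weak convergence of $v^\delta$, and the $C([0,T];H)$ convergence already established; Gronwall's lemma then yields strong convergence in $L^p(0,T;V)\cap C([0,T];H)$. The main difficulty across the argument is the third paragraph: it is precisely the complementarity identity $\langle R^\delta,y^\delta\rangle=\langle R^\delta,\psi\rangle$ that linearizes the otherwise intractable nonlinear product $\langle R^\delta,y^\delta\rangle$ and enables Minty's trick to close in the presence of the obstacle.
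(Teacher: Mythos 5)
Your overall strategy matches the paper's: uniform bounds from coercivity and the Lewy--Stampacchia estimate $0\leq -R^\delta\leq h^-$, weak compactness, identification of the limit via a Minty argument in which the complementarity $\langle R^\delta, y^\delta-\psi\rangle=0$ is used to linearize the otherwise troublesome pairing, uniqueness of the skeleton solution, and a direct energy estimate under $H_7$ for the strong $L^p(0,T;V)$ convergence. The decomposition of the control term $G(\cdot,y^\delta)v^\delta-G(\cdot,\bar y)v$ is the same decomposition the paper uses, and your observation that complementarity is the key to closing Minty's trick in the obstacle setting is precisely the point the paper emphasizes.

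There is, however, a genuine gap in the first step. You claim that an Aubin--Lions-type argument ``coupled with the equicontinuity of $t\mapsto\|y^\delta(t)\|_H^2$ obtained from the energy identity'' upgrades the convergence to $C([0,T];H)$ before the limit is identified. This is not justified: the available bounds are $y^\delta$ bounded in $L^p(0,T;V)\cap L^\infty(0,T;H)$ and $\partial_t y^\delta$ bounded in $L^{r}(0,T;V')$ with $r=\min(2,p')$, and Simon's compactness theorem (Corollary 6 of \cite{Simon}, the version the paper cites) yields only compactness in $L^q(0,T;H)$ for finite $q$, not in $C([0,T];H)$. The $C([0,T];H)$ variant of Simon's theorem would require a uniform $L^\infty(0,T;V)$ bound, which you do not have. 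Nor does the energy identity give equicontinuity of $\|y^\delta(\cdot)\|_H^2$ uniformly in $\delta$: the increments are controlled by quantities like $\|y^\delta\|_{L^p(s,t;V)}$, which need equi-integrability of $\|y^\delta\|_V^p$ that is not available. Since your Minty step then invokes ``the strong convergence of $y^\delta(T)$ in $H$,'' this step is, as written, unsupported.

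The gap is repairable, and the paper shows how: one only extracts $y^\delta\to\bar y$ strongly in $L^2(0,T;H)$ plus the pointwise weak convergence $y^\delta(t)\rightharpoonup\bar y(t)$ in $H$ for every $t$ (the latter via $\mathbb V\hookrightarrow C([0,T];V')$ and the uniform $H$-bound). In the Minty step the term $\frac12\|(y^\delta-\bar y)(T)\|_H^2$ (or equivalently the discrepancy between $\lim\|y^\delta(T)\|_H^2$ and $\|\bar y(T)\|_H^2$) is dropped by weak lower semicontinuity rather than by strong convergence, yielding the $\limsup$ inequality you need. The strong $C([0,T];H)$ convergence is then \emph{established} in the second step of the argument (not assumed beforehand) by subtracting the two skeleton equations, testing with $y^\delta-\bar y$, and observing that the control-term estimate needs only the already-proved $L^2(0,T;H)$ strong convergence together with the uniform $L^2(0,T;H)$ bounds on $G(\cdot,y^\delta)v^\delta$ and $G(\cdot,\bar y)v$. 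If you reorganize your argument along these lines, the rest of your proposal goes through.
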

	\begin{proof}
Let	$(\phi_n)_n	\subset	S_N$	such	that		$\phi_n$	converges	to	$\phi$	weakly	in	$L^2(0,T;H_0)$.	Denote	by	$y_n$	and	$y^\phi$	the	solutions	of	\eqref{Skeleton}	corresponding	to	$\phi_n$	and	$\phi$	respectively.	To	prove	Lemma	\ref{LDP-cd2},	we	show	
$$	y_n	\to	y^\phi	\text{	in	}	L^p(0,T;V)\cap	C([0,T];H),$$
if	$H_1$-$H_7$	hold	and	the	convergence	in	$C([0,T];H)$		follows similarly,	if	$H_7$	does	not	hold.
	\subsubsection*{The	first	step}
	Thanks	to	Proposition	\ref{Prop-Skeleton},	there	exists	$(y_n,R_n)$	satisfying	
			\begin{itemize}
		\item[$\bullet$]  $(y_n,-R_n) \in (L^p(0,T;V)\cap C([0,T];H))\times (L^{p^\prime}(0,T;V^\prime))^+ $, 
		$y_n(0)=u_0 $ and $ y_n \geq \psi .$
		\item[$\bullet$] For any 
		$  v \in L^p(0,T;V);\quad v\geq \psi$,  $ \quad\langle R_n,y_n-v\rangle ds  \geq 0$ a.e. in $[0,T]$. 
		\item[$\bullet$] For all $t\in[0,T]$:	$y_n(t)+\displaystyle\int_0^tR_n ds+\displaystyle\int_0^tA(y_n,\cdot) ds=u_0+\int_0^t(f+G(\cdot, y_n)\phi_n) ds	$	in	$V^\prime$. 

		\item	The	following	Lewy	Stampacchia	inequality	holds
		\begin{align}\label{LS-bound}
	0\leq  \dfrac{dy_n}{dt}+ A(y_n,\cdot)- G(y_n,\cdot)\phi-f=-R_n \leq h^-=  \left(f - \partial_t  \psi  - A(\psi,\cdot)\right) ^-.
		\end{align} 
	\end{itemize}
	
First,	let	us	establish	some	uniforom	estimates	w.r.t.	$n$.	We	have
	\begin{align}\label{eqn-condition2}
	&\dfrac{dy_n}{dt}+A(y_n,\cdot)-G(\cdot,y_n)\phi_n+R_n
	=f
	\end{align}
	Let	$t\in[0,T]$,	by using $y_n$ as a test function and integrating in time from $0$ to $t$, we get
	\begin{align*}
	&\|y_n(t)\|_{H}^2+2\displaystyle\int_0^t \langle A(y_n,\cdot),y_n\rangle ds -2\int_0^t(G(\cdot,y_n)\phi_n,y_n)ds+2\int_0^t \langle	R_n,y_n\rangle	ds = \|u_0\|^2_{H}+2\int_0^t\langle	f,y_n\rangle	ds
	\end{align*}
	By	using	 \eqref{LS-bound},	we	obtain	that:	
	$
	(R_n)_n	\text{	is	bounded	by	}	\Vert	h^-\Vert_{L^{p^\prime}(0,T;V^\prime)}:=C	\text{	in	}	L^{p^\prime}(0,T;V^\prime).$	Thus,		by	using	
		Young	inequality,	for	any	$\gamma>0$
	\begin{align*}
	2\vert\int_0^t \langle	R_n,y_n\rangle	ds\vert&\leq
	2\int_0^t \!\!\!\Vert	R_n\Vert_{V\prime}	\Vert	y_n\Vert_V	ds \leq	C_\gamma+\gamma\Vert	y_n\Vert_{L^{p}(0,T;V)}^{p}.	\end{align*}
	By	using	$H_3$,	one	has	
	$\displaystyle\int_0^t(G(\cdot,y_n)\phi_n,y_n)ds	
		\leq	Lt+\int_0^t(\Vert	\phi_n\Vert_{H_0}^2+L)\Vert	y_n\Vert_H^2ds
.$	Therefore,	arguments	already	detailed	in	the	proof	of	Lemma	\ref{lem1}	ensure
		\begin{align*}
	&\|y_n(t)\|_{H}^2+\alpha\displaystyle\int_0^t  \Vert y_n(s)\Vert_V^p  ds  \leq 
	C \int_0^t(1+\Vert \phi_n\Vert_{H_0}^2) \Vert y_n(s) \Vert_H^2  ds+ \frac{\alpha}{2}\int_0^t\|y_n(s)\|^p_V ds+ C(f,T).
	\end{align*} Grönwall	inequality	ensures
$
\|y_n(t)\|_{H}^2+\displaystyle \int_0^t \frac{\alpha}{2} \Vert y_n(s)\Vert_V^p  ds  \leq 
	C(f,T) e^{T+N}.
$	Thus,	we	get

	\begin{align}\label{lem1-m-}
\left\{ \begin{array}{l}
 (y_n)_{n}	\text{ is bounded in	}   L^p(0,T;V)\cap C([0,T];H),\\[0.7ex]
(A(y_n,\cdot))_{n}	\text{	and	}		(R_n)_n \text{ are bounded in	} 	L^{p^\prime}(0,T;V^\prime),\\[0.7ex]	
	(G(\cdot,y_n)\phi_n)_n	\text{ is bounded in	}	L^2(0,T;H).
\end{array}
\right.
\end{align}
	
From	\eqref{eqn-condition2},	we	have
	$
	\dfrac{dy_n}{dt}=f-A(y_n,\cdot)+G(\cdot,y_n)\phi_n-R_n
	$
		and	$(\dfrac{dy_n}{dt})_n$	is	bounded	in	$L^{r}(0,T;V^\prime)$	where	$	r=\min(2,p^\prime),$	thanks	to		\eqref{lem1-m-}.	Set	
		$	\mathbb{V}=\{v\in	L^p(0,T;V);	\quad	\dfrac{dv}{dt}\in	L^{r}(0,T;V^\prime)	\}.$
		Thanks	to	\cite[Corollary	6]{Simon},	note	that	$\mathbb{V}\cap	L^\infty(0,T;H)$	is	compactly	embedded	into	$L^{q}(0,T;	H),$	for	any	finite	$	q>1$.
			By	using	 \eqref{lem1-m-}, 
			there exist $y \in L^\infty(0,T;H)\cap	\mathbb{V} $ and $ (R,\chi ) \in  (L^{p^\prime}(0,T;V^\prime))^2$ such that, up to sub-sequences denoted by the same way,
			\begin{align}
			y_n &\rightharpoonup y \quad \text{in} \quad  \mathbb{V}	\text{	and 	}	y_n\to	y	\text{	in} \quad	L^2(0,T;H),\label{3-8-Sk-}\\
			y_n &	\overset{*}{\rightharpoonup} y  \quad \text{in} \quad  L^\infty(0,T;H),\label{4-Sk-}\\ 
			(A(y_n,\cdot),R_n) &\rightharpoonup (\chi,R)  \quad \text{in} \quad L^{p^\prime}(0,T;V^\prime)\times	L^{p^\prime}(0,T;V^\prime),\label{5-Sk-}\\
			y_n(t) &\rightharpoonup y(t),	\forall	t\in[0,T]\quad  \text{in} \quad H.\label{7-Sk-}
			\end{align}
				Indeed,
			by  compactness  with respect to the weak topology in the spaces $ \mathbb{V}$,	$L^{p^\prime}(0,T;V^\prime)$ 	and	the compact embedding of	$\mathbb{V}\hookrightarrow	L^2(0,T;H)$,  there exist $y \in \mathbb{V}$, $\chi,R \in L^{p^\prime}(0,T;V^\prime)$  such that $\eqref{3-8-Sk-}$	and $\eqref{5-Sk-}$	hold (up	to	a  sub-sequences).	Moreover,	\eqref{4-Sk-}	follows	from
			the	compactness	with respect to the weak-* topology	in	$L^\infty(0,T;H).$ 
			Since	$\mathbb{V}	\hookrightarrow	C([0,T];V^\prime)$,	one	obtains	that	$y_n(t)	\rightharpoonup	y(t)	$	in	$V^\prime$	and	by	taking	into	acount	the	boundedness	of	$(y_n)_n$	in	$C([0,T];H)$,	we	obtain	\eqref{7-Sk-}.			On	the	other	hand, 
			$-R \in	(L^{p^\prime}(0,T;V^\prime))^+$ since the set of non positive elements of $L^{p^\prime}(0,T;V^\prime)$ is a closed convex subset of $L^{p^\prime}(0,T;V^\prime)$.	On	the	other	hand,	note	that
			$y(0)=u_0$		thanks	to		\eqref{7-Sk-},	since	  $y_n(0)=u_0$	and		$y$	satisfies	the	constaint\textit{	i.e.}	$y\geq\psi$	thanks	to	\eqref{3-8-Sk-}.
		As	a	consequence	of	\eqref{3-8-Sk-},	we	get	
			$$\lim_{n\to \infty}\int_0^t(G(\cdot, y_n)\phi_n,\Phi)ds=\int_0^t(G(\cdot, y)\phi,\Phi)ds,	\quad	\forall	\Phi\in	V,	\forall	t\in[0,T]. $$
			Indeed,	let	$\Phi\in	V$	and	note	that	
			\begin{align*}
		&\vert\int_0^t(G(\cdot, y_n)\phi_n-G(\cdot, y)\phi,\Phi)ds\vert\\&\leq	\vert\int_0^t(G(\cdot, y_n)\phi_n-G(\cdot, y)\phi_n,\Phi)ds+\int_0^t(G(\cdot, y)\phi_n-G(\cdot, y)\phi,\Phi)ds\vert\\
			&\leq	\int_0^t\Vert	G(\cdot, y_n)-G(\cdot, y)\Vert_{L_2(H_0,H)}\Vert\phi_n\Vert_{H_0}\Vert	\Phi\Vert_Hds+\vert\int_0^t(G(\cdot, y)\phi_n-G(\cdot, y)\phi,\Phi)ds\vert=I_1^n+I_2^n.	
					\end{align*}
					Thanks	to	Holder	inequality,	we	write
					\begin{align*}
						I_1^n\leq	C_\Phi\int_0^T\Vert	G(\cdot, y_n)-G(\cdot, y)\Vert_{L_2(H_0,H)}\Vert\phi_n\Vert_{H_0}ds	&\leq	\sqrt{N\cdot	M}C_\Phi(\int_0^T\Vert	y_n-y\Vert_{H}^2dt)^{\frac{1}{2}}\to	0.
					\end{align*}
		Recall	that	$G(\cdot, y)\in	L^2(0,T;L_2(H_0,H))$,		since	$\phi_n$	converges	weakly	to	$\phi$	in	$L^2(0,T;H_0)$	we	obtain	that	$\displaystyle\lim_{n\to \infty}	I_2^n=0$.	By	passing	to	the	limit	in	\eqref{eqn-condition2},	we	get
			$$(y(t),\Phi)+\displaystyle\int_0^t\langle R,\Phi\rangle ds+\displaystyle\int_0^t \langle \chi, \Phi\rangle ds=(u_0,\Phi)+\int_0^t(f+G(\cdot, y)\phi, \Phi) ds,\quad\forall	\Phi\in	V.$$ \\
		We	will	prove	that	 $A(y,\cdot)=\chi$	and	$\langle R, \psi-y\rangle ds=0$	a.e.	First,	we	take	the	difference	between	the	last	equation	and	\eqref{eqn-condition2}.\\
		 Let	$t\in[0,T]$,
				thanks	to	Proposition	\ref{Prop-Skeleton}, we	know	that	$\langle R_n, y_n-\psi\rangle=0$	a.e.	in	$[0,T]$,	then
		\begin{align*}
		 \displaystyle\int_0^t \langle R_n, y_n-y\rangle ds-\displaystyle\int_0^t \langle R, y_n-y\rangle ds &= \displaystyle\int_0^t \langle R_n, y_n-\psi\rangle	ds+\int_0^t \langle R_n, \psi-y\rangle ds-\displaystyle\int_0^t \langle R, y_n-y\rangle ds\\&= \int_0^t \langle R_n, \psi-y\rangle ds-\displaystyle\int_0^t \langle R, y_n-y\rangle ds.
		\end{align*}
			By	using	the	last	equality,	\eqref{eqn-condition2}	and	the	monotonicity	of	$A$	(see	Subsubsection	\ref{limit-passage-pen}),	we	obtain				\begin{align}
		\dfrac{1}{2}\Vert (y_n-y)(t)\Vert_H^2+	 \displaystyle\int_0^t \langle A(v,\cdot), y_n-v\rangle ds +\displaystyle\int_0^t \langle A(y_n,\cdot),v-y\rangle ds-\displaystyle\int_0^t \langle \chi, y_n-y\rangle ds \label{identi-LDP}\\+\displaystyle\int_0^t \langle R_n, \psi-y\rangle ds-\displaystyle\int_0^t \langle R, y_n-y\rangle ds \leq 	\vert\displaystyle\int_0^t \langle G(y_n,\cdot)\phi_n-G(y,\cdot) \phi, y_n-y\rangle ds\vert=\vert	I\vert,\notag\end{align}
	where	$v\in L^p(0,T;V)$.
				 	On	the	other	hand,	thanks	to	$H_3$	and	\eqref{lem1-m-},	we	get
			\begin{align*}
			\int_0^T	\Vert	G(y_n,\cdot)\phi_n\Vert_H^2ds&\leq	\int_0^T	\big\Vert	G(y_n,\cdot)\Vert_{L_2(H_0,H)}^2\Vert\phi_n\Vert_{H_0}^2ds\\
			&\leq	L\int_0^T	\big(1+\Vert	y_n\Vert_{H}^2)\Vert\phi_n\Vert_{H_0}^2ds\leq	C(L)\cdot	N.
			\end{align*}	
			and	similarly	we	get	$	\displaystyle\int_0^T	\Vert	G(y,\cdot)\phi\Vert_H^2ds\leq		C(L)\cdot	N$.
			 Thus,	by	using	\eqref{3-8-Sk-}	and	Cauchy	Schwarz	inequality,	we	deduce	$\displaystyle\lim_{n\to \infty}	\vert	I\vert=0$,	since
$			\vert	I\vert\leq	\int_0^T	\big(\Vert	G(y_n,\cdot)\phi_n\Vert_H+\Vert	G(y,\cdot)\phi\Vert_H\big)\Vert	y_n-y\Vert_Hds.
$
		Note	that	$\displaystyle\liminf_{n}\Vert (y_n-y)(t)\Vert_H^2=\liminf_{n}\Vert y_n(t)\Vert_H^2-\Vert y(t)\Vert_H^2	\geq	0,	\forall	t\in[0,T]$,	thanks	to
		\eqref{7-Sk-}.
By setting $t=T$	and	
		  passing to the  limit as $n \rightarrow \infty$	in	\eqref{identi-LDP},     we get  
		$$ \int_0^T\langle R, \psi-y\rangle ds+\displaystyle\int_0^T\langle A(v,\cdot)-\chi, y-v\rangle ds  \leq  0. $$
		By	setting	$v=y$	in	the	last	inequality,	we		get	$\int_0^T\langle R, \psi-y\rangle ds\leq	0$.	We	know	already	that	$-R\in	(L^{p^\prime}(0,T;V^\prime))^+$	and	$y\geq	\psi$,	which		gives	$\displaystyle\int_0^T\langle R, \psi-y\rangle ds\geq	0$.	Hence,	
			$\int_0^T\langle R, \psi-y\rangle ds=0$
		and	$\langle R, \psi-y\rangle	=0$	a.e.	in	$[0,T]$,	since	the	integrand	is	always	non	negative	(see		\cite[Remark	5]{YV22}).
		Finally,	one	has
			$ \displaystyle\int_0^T\langle A(v,\cdot)-\chi, y-v\rangle ds  \leq  0. $
	By	using "Minty's trick" \cite[Lemma 2.13 p.35]{Roubicek},	we	deduce 	that $A(y,\cdot)=\chi$	.
		In	conclusion,	$(y,R)$	satisfies	
				\begin{itemize}
			\item[$\bullet$]  $(y,-R) \in (L^p(0,T;V)\cap L^\infty(0,T;H))\times (L^{p^\prime}(0,T;V^\prime))^+ $, 
			$y(0)=u_0 $ and $ y \geq \psi .$
			\item[$\bullet$]	$\langle R, \psi-y\rangle	=0$	a.e.	in	$[0,T]$ and 
			$  \forall	v \in L^p(0,T;V):\quad v\geq \psi$,  $ \langle R,y-v\rangle   \geq 0$ a.e. in $[0,T]$. 
			\item[$\bullet$] For all $t\in[0,T]$, 
						\begin{align}\label{sket-lim}
			(y(t),\Phi)+\displaystyle\int_0^t\langle R,\Phi\rangle ds+\displaystyle\int_0^t \langle A(y,\cdot), \Phi\rangle ds=(u_0,\Phi)+\int_0^t(f+G(\cdot, y)\phi, \Phi) ds,\quad\forall	\Phi\in	V.
			\end{align} 

		\end{itemize}
	\subsubsection*{The	second	step}
	From	\eqref{3-8-Sk-},
		we	have	(up	to	subsequence,	\textit{a	priori})	
	\begin{align}\label{strong-cv-L2}
	y_n	\text{	converges	to	}	y		\text{	strongly	in	}	L^2(0,T;H),	
	\end{align}
	but	the	uniqueness	of	the	limit	yields	the	convergence	of	the	whole	sequence.
		Let	us	prove	that	the	following	convergence	holds
	\begin{align*}
		y_n	\to	y	\text{	in	}	L^p(0,T;V)\cap	C([0,T];H).
	\end{align*}
By	taking	the	difference	between	\eqref{eqn-condition2}	and	\eqref{sket-lim}	and	using	$y_n-y$	as	test	function,	we	get	
		\begin{align*}
	\dfrac{1}{2}\Vert &(y_n-y)(t)\Vert_H^2+\displaystyle\int_0^t \langle A(y_n,\cdot) -A(y,\cdot), y_n-y\rangle ds +\displaystyle\int_0^t \langle R_n-R, y_n-y\rangle ds\\
	&=\displaystyle\int_0^t \langle G(y_n,\cdot)\phi_n-G(y,\cdot) \phi, y_n-y\rangle ds=\mathcal{I}(t) ,\quad	\forall	t\in[0,T]. 
	\end{align*}
	Since	$y_n,	y\in	  L^p(0,T;V)$	and	satisfying	$ y_n,y\geq \psi$,	we	obtain	
	\begin{align*}
		\displaystyle\int_0^t \langle R_n-R, y_n-y\rangle ds=\displaystyle\int_0^t \langle R_n, y_n-y\rangle ds+\displaystyle\int_0^t \langle R, y-y_n\rangle ds\geq	0.
	\end{align*}
				Since	$\lambda_T Id+A$ is T-monotone, one	has $\displaystyle\int_0^t\langle A(y_n,\cdot)-A(y,\cdot),y_n-y\rangle ds \geq -\lambda_T\int_0^t\Vert y_n-y\Vert_H^2 ds.$
	 If	 moreover	H$_7$	holds, we  have
	$$\displaystyle\int_0^t\langle A(y_n,\cdot)-A(y,\cdot),y_n-y\rangle ds \geq -\lambda_T\int_0^t\Vert y_n-y\Vert_H^2 ds+\int_0^t	\bar{\alpha}\Vert y_n-y\Vert_V^pds.$$
	Therefore,
		\begin{align*}
	\dfrac{1}{2}\sup_{t\in [0,T]}\Vert &(y_n-y)(t)\Vert_H^2+\int_0^T	\bar{\alpha}\Vert (y_n-y)(s)\Vert_V^pds\\
	&\leq	\lambda_T\int_0^T\Vert (y_n-y)(s)\Vert_H^2 ds+\sup_{t\in [0,T]}\vert\displaystyle\int_0^t \langle G(y_n,\cdot)\phi_n-G(y,\cdot) \phi, y_n-y\rangle ds\vert,\quad	\forall	t\in[0,T]. 
	\end{align*}
	Similarly	to	$I$	(see	\eqref{identi-LDP}),	by	using	\eqref{strong-cv-L2}	one	has	
	$$\lim_{n\to \infty}\sup_{t\in [0,T]}\vert	\mathcal{I}(t)	\vert=\lim_{n\to \infty}\sup_{t\in [0,T]}\vert\displaystyle\int_0^t \langle G(y_n,\cdot)\phi_n-G(y,\cdot) \phi, y_n-y\rangle ds\vert=0.$$
	By	using	again	\eqref{strong-cv-L2},	we	conclude	
$
	y_n	\to	y	\text{	in	}	L^p(0,T;V)\cap	C([0,T];H).
$	Finally,
		from	Proposition	\ref{Prop-Skeleton},	$y$	is	the	unique	solution	to	\eqref{sket-lim}.	Thus,	$y=y^\phi$,	which	completes	the	proof	of	Lemma	\ref{LDP-cd2}.
	\end{proof}
	\subsection{On	vanishing	multiplicative	noise	$\delta\downarrow	0$}
	Let $\{\phi^\delta\}_{\delta>0} \subset \mathcal{A}_N$ for some $N<\infty$,	see	\eqref{An-LDP-2}. By using Girsanov theorem (\cite[Thm. 10.14]{Daprato-Zab92}) and	\cite[Prop.	10.17]{Daprato-Zab92},	we	obtain	the	existence	of		 a $Q$-Wiener process,	denoted	by	$W^\delta$, with respect to $\{\mathcal{F}_t\}_{t\geq 0}$ on the probability space $(\Omega,\mathcal{F},P^\delta)$ where  
		$ W^\delta(t)=W(t)+\dfrac{1}{\delta}\displaystyle\int_0^t\phi^\delta(s)ds, \quad t\in [0,T]$	and
		\begin{align}
	dP^\delta&=\exp[-\dfrac{1}{\delta}\int_0^T\langle \phi^\delta(s),dW(s)\rangle_{H_0}-\dfrac{1}{2\delta^2}\int_0^T\Vert \phi^\delta(s)\Vert_{H_0}^2ds]dP\label{equivP1}.
\end{align}
	In	fact,	the probability measure $P$  and $P^\delta$	are	mutually	absolutely 	continuous.
	By	using	Theorem	\ref{TH1}, there	exists	a	unique	 solution	$(v_\delta,r_\delta)$  satisfying
	\begin{align}\label{Eq-Girsanov}
	v_\delta=g^\delta(W(\cdot)+\dfrac{1}{\delta}\int_0^\cdot \phi^\delta(s)ds )=g^\delta(W^\delta(\cdot)).
	\end{align}

	\begin{enumerate}
		\item  $v_\delta$	is	$L^p(0,T;V)$-adapted 	process	with		P-a.s.		paths $v_\delta(\omega,\cdot) \in C([0,T];H).$	
		\item  $v_\delta(t=0)=u_0 \quad \text{and} \quad  v_\delta \geq \psi$	P-a.s.
		\item $P$-a.s, for all $t\in[0,T]$, 
		\begin{align}\label{v-delta}
		v_\delta(t)+\displaystyle\int_0^tr_\delta ds+\displaystyle\int_0^t A(v_\delta,\cdot)ds=u_0+\delta\displaystyle\int_0^t G(v_\delta,\cdot)dW(s)+\displaystyle\int_0^t fds+\int_0^tG(\cdot, v_\delta)\phi^\delta ds	\text{	in	}		V^\prime.
		\end{align}
		
		\item	$r_\delta$	is 	$L^{p^\prime}(0,T;V^\prime)$-adapted		process,	$- r_\delta\in (V^\prime)^+$	  and  $\langle r_\delta,v_\delta-\psi\rangle = 0$ \ a.e. in $\Omega_T$\footnote{	Note	that	$k\in	(L^p(\Omega,V^\prime))^+$	if	and	only	if	$k(t,\omega)	\in	(V^\prime)^+$	a.e.	in	$\Omega\times[0,T]$,	see	\cite[Remark	5]{YV22}.}.
	\end{enumerate}	
	By	using	Proposition	\ref{Prop-Skeleton},			there	exists	a	unique	random	solution		  $(u_\delta,k_\delta)$ satisfying	P-a.s.:
	\begin{enumerate}
		\item  $(u_\delta,-k_\delta) \in (L^p(0,T;V)\cap C([0,T];H))\times (L^{p^\prime}(0,T;V^\prime))^+ $, 
		$u_\delta(0)=u_0 $ and $ u_\delta \geq \psi .$
	\item	$\langle k_\delta,u_\delta-\psi\rangle  = 0$ a.e. in $[0,T]\times\Omega$
		\item	$P$-a.s, for all $t\in[0,T]$, 
		\begin{align}\label{u-delta}
		u_\delta(t)+\displaystyle\int_0^t k_\delta ds+\displaystyle\int_0^t  A(u_\delta,\cdot) ds=u_0+\int_0^t(f+G(\cdot, u_\delta)\phi^\delta) ds	\text{	in	}		V^\prime.
		\end{align}
	\end{enumerate}
		Moreover,		by	using		$g^0$	defined	by	\eqref{g0-def},	we	write	$u_\delta=g^0(\phi^\delta)$	P-a.s.
	
	\begin{lemma}\label{LDP-cd1}
		Assume	$H_1-H_6$.	Let $\{ \phi^\delta: \delta>0\} \subset \mathcal{A}_N $ for some $N<\infty$.  Then
		$$\lim_{\delta\to 0}P(\sup_{t\in [0,T]}\Vert	(v_\delta-u_\delta)(t)\Vert_H>\epsilon)=0,\quad\forall	\epsilon>0.$$
	Moreover,	if			$H_7$	holds,	then
	$\displaystyle\lim_{\delta\to 0}P(\vert	v_\delta-u_\delta\vert_T>\epsilon)=0,\quad\forall	\epsilon>0.$	
			\end{lemma}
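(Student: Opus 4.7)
The plan is to subtract \eqref{u-delta} from \eqref{v-delta}, apply It\^o's formula to $\|v_\delta-u_\delta\|_H^2$ pathwise, and then combine a pathwise Gronwall argument with the Burkholder--Davis--Gundy (BDG) inequality. Setting $w_\delta:=v_\delta-u_\delta$, the difference process satisfies, in $V'$,
\[
dw_\delta + (A(v_\delta)-A(u_\delta))\,ds + (r_\delta-k_\delta)\,ds = (G(v_\delta)-G(u_\delta))\phi^\delta\,ds + \delta G(v_\delta)\,dW,
\]
so the variational It\^o formula (as in \cite[Thm.~4.2.5]{RoK15}) yields an identity for $\|w_\delta(t)\|_H^2$ involving the four terms: the monotone operator, the reflection difference, the deterministic noise perturbation, and the stochastic integral together with its It\^o correction $\delta^2\|G(v_\delta)\|_{L_2(H_0,H)}^2$.

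The estimates on those terms are the heart of the proof. The $A$-difference is controlled by the shifted monotonicity of $H_{2,2}$, giving the bound $-2\lambda_T\int_0^t\|w_\delta\|_H^2\,ds$. The reflection difference is the key nonnegative term: using $u_\delta,v_\delta\geq\psi$, $-r_\delta,-k_\delta\in(V')^+$ together with the orthogonalities $\langle r_\delta,v_\delta-\psi\rangle=\langle k_\delta,u_\delta-\psi\rangle=0$, expansion gives $\langle r_\delta-k_\delta,w_\delta\rangle = \langle r_\delta,\psi-u_\delta\rangle+\langle -k_\delta,v_\delta-\psi\rangle \geq 0$, so this term moves to the left with a favorable sign. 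The perturbation term is Lipschitz-bounded by $\sqrt{M}\|\phi^\delta\|_{H_0}\|w_\delta\|_H^2$ via $H_{3,1}$, and the It\^o correction is dominated by $\delta^2 L(1+\|v_\delta\|_H^2)$ via $H_{3,2}$.

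Combining these and writing $Y(t):=\sup_{s\leq t}\|w_\delta(s)\|_H^2$, one obtains the pathwise inequality
\[
Y(t)\leq \int_0^t\bigl(2\lambda_T+2\sqrt{M}\|\phi^\delta(s)\|_{H_0}\bigr)Y(s)\,ds + Z_\delta,\qquad Z_\delta:=\delta^2 L\!\!\int_0^T\!(1+\|v_\delta\|_H^2)\,ds + 2\delta\!\!\sup_{s\leq T}\!\Bigl|\int_0^s(w_\delta,G(v_\delta)\,dW)_H\Bigr|.
\]
Since $\phi^\delta\in\mathcal A_N$, Cauchy--Schwarz gives $\int_0^T\|\phi^\delta\|_{H_0}\,ds\leq \sqrt{TN}$ P-a.s., so the Gronwall integrating factor is bounded by a deterministic constant $e^{C(T,M,\lambda_T,N)}$ and hence $Y(T)\leq e^{C}Z_\delta$ P-a.s. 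It remains to show $Z_\delta\to 0$ in probability: the first summand is $O(\delta^2)$ in $L^1(\Omega)$ by the standard a priori bound $\sup_\delta\mathbb{E}\sup_{t\leq T}\|v_\delta\|_H^2<\infty$ for \eqref{v-delta}, and the second is handled by BDG together with H$_{3,2}$ and the same a priori bound, yielding $\mathbb{E}(\delta\sup_{s\leq T}|M_\delta(s)|)\to 0$. Chebyshev then gives the desired convergence in probability.

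For the second assertion, assumption $H_7$ upgrades the monotonicity inequality to $\langle A(v_\delta)-A(u_\delta),w_\delta\rangle\geq \bar\alpha\|w_\delta\|_V^p-\lambda_T\|w_\delta\|_H^2$, so repeating the above scheme retains the extra term $\bar\alpha\int_0^T\|w_\delta\|_V^p\,ds$ on the left-hand side; the Gronwall step then bounds $|v_\delta-u_\delta|_T$ by $CZ_\delta^{1/\tilde p}e^C$ for appropriate exponent, giving convergence in probability in the stronger norm. The main technical difficulty is the simultaneous handling of the random signal $\phi^\delta$ and the vanishing noise: the argument works only because $\phi^\delta$ is uniformly bounded in $L^2(0,T;H_0)$ pathwise (by membership in $\mathcal A_N$), which makes the Gronwall integrating factor deterministic, and because the $\delta$-prefactor on the stochastic integral lets BDG beat the uniform energy bound on $v_\delta$.
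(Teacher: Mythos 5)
Your proposal is correct and follows essentially the same route as the paper: subtract \eqref{u-delta} from \eqref{v-delta}, apply It\^o's formula to $\|v_\delta-u_\delta\|_H^2$, discard the non-negative reflection difference $\langle r_\delta-k_\delta,v_\delta-u_\delta\rangle$ via the two orthogonality conditions, control the $A$- and $G$-terms by $H_{2,2}$ and $H_{3,1}$, and then run a pathwise Gronwall argument whose integrating factor is deterministic because $\phi^\delta\in\mathcal A_N$, finishing with BDG for the stochastic integral and Chebyshev. The only (inconsequential) deviation is that you keep $\sqrt{M}\|\phi^\delta\|_{H_0}$ and bound $\int_0^T\|\phi^\delta\|_{H_0}\,ds\le\sqrt{TN}$ by Cauchy--Schwarz, whereas the paper absorbs it via Young into $\|\phi^\delta\|_{H_0}^2$ and uses $\int_0^T\|\phi^\delta\|_{H_0}^2\,ds\le N$ directly; you also label the a priori bounds \eqref{boundedness-v-delta}--\eqref{boundedness-u-delta} as ``standard'' where the paper spells out their derivation, but these must indeed be proved with uniformity in $\delta$, which the paper does.
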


	\subsubsection*{Proof	of	Lemma	\ref{LDP-cd1}}	First,	let	us	prove	some	uniform	estimates	with	respect	to	$\delta$.	
	For	that,	consider	$t\in[0,T]$,		by	using		\eqref{v-delta}	we	have
		\begin{align*}
				(v_\delta-\psi)(t)&+\displaystyle\int_0^tr_\delta ds+\displaystyle\int_0^t A(v_\delta,\cdot)ds=u_0-\psi(0)\\
				&\quad+\delta\displaystyle\int_0^t G(v_\delta,\cdot)dW(s)+\displaystyle\int_0^t (f-\dfrac{d\psi}{ds})	ds+\int_0^tG(\cdot, v_\delta)\phi^\delta ds.	
		\end{align*}
			Let	$\delta<1$,	by	using	Ito	formula	with	$F(u)=\Vert	u\Vert_H^2$	for	$v_\delta-\psi$,	we	get	
			\begin{align*}
			&\|(v_\delta-\psi)(t)\|_{H}^2+2\displaystyle\int_0^t \langle A(v_\delta,\cdot),v_\delta-\psi\rangle ds 
			+2\int_0^t\langle	r_\delta,v_\delta-\psi\rangle	ds\\
			& = \|u_0-\psi(0)\|^2_{H}+\delta^2\int_0^t\Vert	G(\cdot,v_\delta)\Vert_{L_2(H_0,H)}^2ds
			+
			2\int_0^t(G(\cdot,v_\delta)\phi^\delta,v_\delta-\psi)ds\\ &\qquad+
			2\int_0^t \langle  f- \dfrac{d\psi}{ds}  , v_\delta-\psi \rangle ds+2\delta\displaystyle\int_0^t (G(\cdot,v_\delta),v_\delta-\psi)dW(s).			
			\end{align*}
			By using $H_3$, we get
			\begin{align*}
			(G(\cdot,v_\delta)\phi^\delta,v_\delta-\psi)+\Vert	G(\cdot,v_\delta)\Vert_{L_2(H_0,H)}^2 &\leq \Vert G(\cdot,v_\delta)\Vert_{L_2(H_0,H)}\Vert \phi^\delta\Vert_{H_0}\Vert v_\delta-\psi\Vert_H+\Vert	G(\cdot,v_\delta)\Vert_{L_2(H_0,H)}^2	\\
			&\leq	C(L)(1+\Vert	\phi^\delta\Vert_{H_0}^2)\Vert v_\delta\Vert_H^2+C(L)(1+\Vert \psi\Vert_H^2+	\Vert	\phi^\delta\Vert_{H_0}^2).
			\end{align*}
		Argument detailed in the proof of Lemma 	\ref{lem1}	yields
	
			\begin{align*}
			\langle A(v_\delta,\cdot),v_\delta-\psi\rangle  
						\geq& \frac{\alpha}{2} \Vert v_\delta\Vert_V^p - \lambda \Vert v_\delta\Vert_H^2-l_1  - C(\psi),
			\end{align*}
			where $C(\psi) \in L^1([0,T])$.	Let	$\beta>0$,	by	using	Burkholder-Davis-Gundy	inequality	we	get
			\begin{align*}
				\E\sup_{r\in [0,t]} \left|\int_0^r (G(v_\delta,\cdot),v_\delta-\psi)_H \,dW \right|	&\leq	\beta\E\sup_{r\in [0,t]}\|(v_\delta-\psi)(r)\|_{H}^2+\dfrac{C}{\beta}\E\int_0^t\Vert G(\cdot,v_\delta)\Vert_{L_2(H_0,H)}^2ds\\
				&\leq	\beta\E\sup_{r\in [0,t]}\|(v_\delta-\psi)(r)\|_{H}^2+C_\beta	Lt+C_\beta	L\E\int_0^t\Vert v_\delta(s)\Vert_{H}^2ds.
			\end{align*}
						We	recall	that	 $\langle r_\delta,v_\delta-\psi\rangle = 0$  a.e. in $\Omega_T$.
			 Thus, for any positive $\gamma$, Young's inequality yields the existence of a positive constant $C_\gamma$ that may change form line to line, such that 
			\begin{align*}
			&(1-\beta)\E\sup_{r\in [0,t]}\|(v_\delta-\psi)(r)\|_{H}^2+2\displaystyle \E\int_0^t \frac{\alpha}{2} \Vert v_\delta(s)\Vert_V^p  ds  \leq  
			 \E\int_0^t[\lambda+C(L,\beta)(1+\Vert	\phi^\delta\Vert_{H_0}^2)] \Vert v_\delta(s)\Vert_H^2  ds\\&	\qquad+ \|l_1+C(\psi)\|_{L^1([0,T])} + C_\gamma(f, \dfrac{d\psi}{dt} ) 
		+ \gamma \E\int_0^t\|(v_\delta-\psi)(s) \|^p_V ds+C(L,\beta)(t+N). 
			\\&\qquad\qquad\leq 
			 C\E\int_0^t (1+\Vert	\phi^\delta\Vert_{H_0}^2)\Vert (v_\delta - \psi)(s)\Vert_H^2  ds+ \frac{\alpha}{2}\E\int_0^t\|v_\delta(s)\|^p_V ds+ C(N,\psi,f),
			\end{align*}
			for a suitable choice of $\gamma$. 	An	appropriate	choice	of	$\beta$	and	  Gronwall's lemma	yield	the	existence	of	$C:=C(\psi,f,N)>0$	independent	of	$\delta$	such	that		
					\begin{align}\label{boundedness-v-delta}
				\E\sup_{r\in [0,t]}\|v_\delta(r)\|_{H}^2+\alpha\displaystyle \E\int_0^t  \Vert v_\delta(s)\Vert_V^p  ds\leq	C(\psi,f,N),	\quad	\forall	t\in[0,T].
					\end{align}
					Moreover,	we	obtain	a	similar	estimate	for	$u_\delta$,	namely
										\begin{align}\label{boundedness-u-delta}
					\E\sup_{r\in [0,t]}\|u_\delta(r)\|_{H}^2+\alpha\displaystyle \E\int_0^t  \Vert u_\delta(s)\Vert_V^p  ds\leq	C(\psi,f,N),\quad	\forall	t\in[0,T].
					\end{align}
		Next,	\eqref{boundedness-v-delta}	and	\eqref{boundedness-u-delta}	will	be	used	to	prove	Lemma	\ref{LDP-cd1}.\\
			
				By	taking	the	difference between		\eqref{v-delta}	and	\eqref{u-delta},	we	have	$P$-a.s, for all $t\in[0,T]$, 
\begin{align*}
	v_\delta(t)-u_\delta(t)&+\displaystyle\int_0^t(r_\delta-k_\delta) ds+\displaystyle\int_0^t [A(v_\delta,\cdot)-A(u_\delta,\cdot)]ds\\&=\delta\displaystyle\int_0^t G(v_\delta,\cdot)dW(s)+\int_0^t[G(\cdot, v_\delta)\phi^\delta-G(\cdot, u_\delta)\phi^\delta] ds	\text{	in	}		V^\prime.
\end{align*}	
Let	$t\in	[0,T]$.	By	using	Ito	formula	with	$F(u)=\Vert	u\Vert_H^2$	for	$v_\delta-u_\delta$,	we	get
	\begin{align*}
&\|(v_\delta-u_\delta)(t)\|_{H}^2+2\displaystyle\int_0^t \langle A(v_\delta,\cdot)-A(u_\delta,\cdot),v_\delta-u_\delta\rangle ds 
+2\int_0^t\langle	r_\delta-k_\delta,v_\delta-u_\delta\rangle	ds \\&= \delta^2\int_0^t\Vert	G(\cdot,v_\delta)\Vert_{L_2(H_0,H)}^2ds
	+
2\int_0^t(G(\cdot,v_\delta)\phi^\delta-G(\cdot,u_\delta)\phi^\delta,v_\delta-u_\delta)ds\\&\qquad+
2\delta\displaystyle\int_0^t (G(\cdot,v_\delta),v_\delta-u_\delta)dW(s).
\end{align*}
Recall	that				$r_\delta$	and	$k_\delta$	satisfy	 
\begin{align}\label{reflected}
v_\delta,u_\delta\geq\psi,	\quad- r_\delta,-k_\delta\in (V^\prime)^+;\quad	\langle r_\delta,v_\delta-\psi\rangle  = 0 \text{	and 	}	\langle k_\delta,u_\delta-\psi\rangle  = 0\text{	a.e. in	} \Omega_T.	
\end{align}
Then	
$\displaystyle\int_0^t\langle	r_\delta-k_\delta,v_\delta-u_\delta\rangle	ds	\geq	0	\text{	a.e.	in	}	\Omega.$
	By	using	$H_{2,2}$, one	has $$\displaystyle\int_0^t\langle A(v_\delta,\cdot)-A(u_\delta,\cdot),v_\delta-u_\delta\rangle ds \geq -\lambda_T\int_0^t\Vert v_\delta-u_\delta\Vert_H^2 ds.$$
			 If	moreover H$_7$	holds, we  have
	$$\displaystyle\int_0^t\langle A(v_\delta,\cdot)-A(u_\delta,\cdot),v_\delta-u_\delta\rangle ds \geq -\lambda_T\int_0^t\Vert v_\delta-u_\delta\Vert_H^2 ds+\int_0^t	\bar{\alpha}\Vert v_\delta-u_\delta\Vert_V^pds.$$
By	using	$H_{3,1}$	and	Young's	inequality
\begin{align*}
2\vert\int_0^t(G(\cdot,v_\delta)\phi^\delta-G(\cdot,u_\delta)\phi^\delta,v_\delta-u_\delta)ds\vert&\leq	\int_0^t \Vert G(\cdot, v_\delta)-G(\cdot, u_\delta)\Vert_{L_2(H_0,H)} \Vert \phi^\delta\Vert_{H_0} \Vert v_\delta-u_\delta\Vert_Hds\\
&\leq\displaystyle\int_0^t (M+\Vert \phi^\delta\Vert_{H_0}^2)\Vert v_\delta-u_\delta\Vert_H^2 ds. 
\end{align*}
Thus
	\begin{align*}
\|(v_\delta-u_\delta)(t)\|_{H}^2&+\int_0^t	\bar{\alpha}\Vert (v_\delta-u_\delta)(s)\Vert_V^pds \leq 
\int_0^t (M+\lambda_T+\Vert \phi^\delta\Vert_{H_0}^2)\Vert (v_\delta-u_\delta)(s)\Vert_H^2ds\\
&+\delta^2\int_0^t\Vert	G(\cdot,v_\delta)\Vert_{L_2(H_0,H)}^2ds
+2\delta\vert\displaystyle\int_0^t (G(\cdot,v_\delta),v_\delta-u_\delta)dW(s)\vert
.
\end{align*}
Apply	Grönwall	inequality	to	deduce
\begin{align*}
\sup_{t\in [0,T]}\|(v_\delta-u_\delta)(t)\|_{H}^2&+\int_0^T	\Vert (v_\delta-u_\delta)(s)\Vert_V^pds \\
&\leq	\delta	C(T,N)[\int_0^T\Vert	G(\cdot,v_\delta)\Vert_{L_2(H_0,H)}^2ds
+\sup_{t\in [0,T]}\vert\displaystyle\int_0^t (G(\cdot,v_\delta),v_\delta-u_\delta)dW(s)\vert],
\end{align*}
where	$C(T,N)=e^{T(M+\lambda_T)+N}$.
By	using	$H_{3,2}$	and	\eqref{boundedness-v-delta},	there	exists	$C>0$	independent	of	$\delta$	such	that
$
\E	\displaystyle\int_0^T\Vert	G(\cdot,v_\delta)\Vert_{L_2(H_0,H)}^2ds		 \leq L T+\E	\int_0^T\Vert v_\delta\Vert_H^2ds	\leq	C.
$
Thanks	to	Burkholder-Davis-Gundy inequality		and	$H_{3,2}$,	there	exists	$C>0$	independent	of	$\delta$	such	that
	\begin{align*}
&\E\sup_{r\in [0,T]} \left|\int_0^r (G(v_\delta,\cdot),v_\delta-u_\delta)_H \,dW \right|	\leq	2\E\sup_{r\in [0,t]}\|(v_\delta-u_\delta)(r)\|_{H}^2+2\E\int_0^t\Vert G(\cdot,v_\delta)\Vert_{L_2(H_0,H)}^2ds\\
&\qquad\qquad\leq	2\E\sup_{r\in [0,T]}\|v_\delta(r)\|_{H}^2+2\E\sup_{r\in [0,T]}\|u_\delta(r)\|_{H}^2+2	LT+C	L\E\int_0^T\Vert v_\delta(s)\Vert_{H}^2ds\leq	C,	
\end{align*}
	thanks	to	\eqref{boundedness-v-delta}	and	\eqref{boundedness-u-delta}.
Therefore\begin{align*}
	\E\sup_{t\in [0,T]}\|(v_\delta-u_\delta)(t)\|_{H}^2&+\E\int_0^T	\Vert (v_\delta-u_\delta)(s)\Vert_V^pds \leq	\delta	C(T,N)	\to	0	\text{	as	}	\delta	\to	0.
\end{align*}
Finally,	Markov's inequality	ensures	the	conclusion	of	Lemma	\ref{LDP-cd1}.
\subsection*{Acknowledgements}
	This work is funded by national funds through the FCT - Funda\c c\~ao para a Ci\^encia e a Tecnologia, I.P., under the scope of the projects UIDB/00297/2020 and UIDP/00297/2020 (Center for Mathematics and Applications).


\begin{thebibliography}{}
	

\bibitem{BARBU}
V. Barbu.
\newblock Nonlinear differential equations of monotone types in Banach spaces.
\newblock \textit{Springer, New York,}	2010.


\bibitem{Bauz}
C. Bauzet, E. Bonetti, G. Bonfanti, F. Lebon and G. Vallet.
\newblock {A global existence and uniqueness result for a stochastic Allen-Cahn equation with constraint.}
\newblock \textit{Math. Methods Appl. Sci. }, 40(14), 5241-5261, 2017.

\bibitem{Bens-lions}  A. Bensoussan and J.L. Lions.
\newblock	Applications of Variational Inequalities in Stochastic Control.
\newblock\textit{ North Holland, Amsterdam, New York, Oxford,} 1982.

\bibitem{IYG}
I. H. Biswas, Y. Tahraoui and G. Vallet.
\newblock \textit{Obstacle problem for stochastic conservation law and Lewy  Stampacchia inequality.}
\newblock \textit{J. Math. Anal. Appl.} 527, 127356,	2023.

\bibitem{BD00}
A.Budhiraja and P.Dupuis.
\newblock	A variational representation for positive functionals of infinite dimensional Brownian motion.
\newblock\textit{ Probab. Math. Statist.,} 20(1), 39-61,	2000.
	
	\bibitem{BD08}
			A. Budhiraja,	P. Dupuis	and V.	Maroulas.
	\newblock	Large deviations for infinite dimensional stochastic dynamical	systems continuous time processes. \newblock\textit{	Ann. Probab.} 36(4), 1390–1420,	2008.
	
	
\bibitem{Daprato-Zab92}	
G.	Da Prato and J. Zabczyk.
\newblock Stochastic Equations in Infinite Dimensions.
\newblock\textit{ Cambridge University Press, Cambridge,}	1992.	




\bibitem{DenisMatoussiZhang}
L. Denis, A. Matoussi and J. Zhang.
\newblock The obstacle problem for quasilinear stochastic PDEs: Analytical approach.
\newblock \textit{The Annals of Probability,} 42(3),  865 - 905, 2014.

	
\bibitem{DonatiMartinPardoux}
C. Donati-Martin and E. Pardoux.
\newblock White noise driven SPDEs with reflection.
\newblock\textit{ Probab. Theory Related Fields,} 95(1), 1-24, 1993. 


\bibitem{Freid-Went}
M. I. Freidlin and A. D. Wentzell. 
\newblock Random Perturbations of Dynamical Systems.
 \newblock\textit{	Grundlehren Math. Wiss.} 260, Springer, New York, 1984.

	 \bibitem{GMYV}
	O.  Guib\'e, A. Mokrane, Y. Tahraoui and G. Vallet.
	\newblock	Lewy-Stampacchia's inequality for a pseudomonotone parabolic problem.
	\newblock\textit{ Adv. Nonlinear Anal. }(9),   591 - 612,	2020.
	
	
	\bibitem{GYV}
	O.  Guib\'e,  Y. Tahraoui and G. Vallet.
	\newblock	On Lewy Stampacchia inequalities for a pseudomonotone parabolic obstacle problem with $L^1$-data.
	\newblock\textit{Nonlinear Analysis: Real World Applications} 77,   104030,	2024.
	
	\bibitem{Hau-Par} U. G. Haussmann and  E. Pardoux.
	\newblock	Stochastic variational inequalities of parabolic type.
	\newblock\textit{  E. Appl Math Optim. } 20, 163 -- 192,  1989.
	
	
	\bibitem{Hong21}	
	W.~Hong,  S.~Li and W.~Liu.  
\newblock	Freidlin--Wentzell Type Large Deviation Principle for Multiscale Locally Monotone SPDEs. \newblock\textit{	SIAM Journal on Mathematical Analysis,} 53(6), 6517-6561,	2021.
	
	\bibitem{Liu10}	
W.~Liu.
\newblock	Large deviations for stochastic evolution equations with small multiplicative noise.
\newblock\textit{ Applied Mathematics \& Optimization} 61: 27-56,	 2010.


		\bibitem{RoK15}
	W. Liu  and  M. Rockner.
	\newblock Stochastic Partial Differential Equations: An Introduction.
	\newblock\textit { Springer International Publishing Switzerland,}	2015.
	
	
	
	
	\bibitem{Liu20}	
	W.~Liu,	C.~Tao	and	J.~Zhu.
	\newblock	Large deviation principle for a class of SPDE with locally monotone coefficients.
	\newblock\textit{ Science China Mathematics,} 63, 1181-1202,	 2020.
	
	
	
	
	\bibitem{Matou21}	
	A.~Matoussi, W~Sabbagh and T.~Zhang. 
	\newblock	Large deviation principles of obstacle problems for quasilinear stochastic PDEs.
	\newblock \textit{ Applied Mathematics \& Optimization} 83  849-879,	2021.
	
	\bibitem{Mok-Yas-Val} A. Mokrane, Y. Tahraoui  and  G. Vallet.
	\newblock{On {L}ewy-{S}tampacchia Inequalities for a Pseudomonotone Elliptic Bilateral Problem in Variable Exponent {S}obolev Spaces.}  
	\newblock \textit{Mediterr. J. Math. }, 16(3) paper 64, 2019.
	
	
	
	\bibitem{Ren-Zhang08}
	J.	Ren	 and X.	Zhang. 
	\newblock	Freidlin–Wentzell's large deviations for stochastic evolution equations.	\newblock\textit{ Journal of Functional Analysis,} 254(12), 3148-3172,	2008.
	
	
	\bibitem{RoK08}	
	M. Röckner, B. Schmuland and X. Zhang. 
	\newblock	Yamada-Watanabe theorem for stochastic evolution equations in infinite dimensions. \newblock \textit{Condens. Matter Phys.} 54, 247–259,	2008.
	
	\bibitem{RoK13}	
	M.~Röckner,  F. Y.~Wang	and T.~Zhang.
 \newblock	Stochastic generalized porous media equations with reflection.
	  \newblock\textit{	Stochastic Processes and their Applications,} 123(11), 3943-3962,	2013.
	
	\bibitem{Roubicek}
	T.~Roub{\'{\i}}{\v{c}}ek.
	\newblock  Nonlinear partial differential equations with applications,
	volume 153 of  International Series of Numerical Mathematics.
	\newblock \textit{Birkh\"auser Verlag, Basel,} 2005.

	
	
	\bibitem{Simon}
	J. Simon.
	\newblock Compact sets in the space $L^p(0,T; B)$.
	\newblock\textit { Ann. Mat. Pur. App, }146:65-96, 1987.
	
	
	
	
	\bibitem{NYGA23}
N.~Sapountzoglou,	Y. Tahraoui,	G.~Vallet	and	A.~Zimmermann. \newblock	Stochastic  pseudomonotone parabolic    obstacle problem: well-posedness $\&$ Lewy-Stampacchia's inequalities. 	arXiv preprint arXiv:2305.16090,	2023.			


\bibitem{XU09}
T.~Xu	and	T.~Zhang. 
\newblock	White noise driven SPDEs with reflection: existence, uniqueness and large deviation principles.
\newblock\textit{Stochastic processes and their applications,} 119(10), 3453-3470,	2009.
	
	\bibitem{Yas24}
Y. Tahraoui. \newblock	Invariant measures for stochastic T-monotone parabolic obstacle problems. 	arXiv preprint arXiv: 	2311.02637,	2023.	
	
	\bibitem{Yas}
	Y. Tahraoui. \newblock	Parabolic problems with constraints, deterministic and stochastic. \newblock\textit{PhD thesis,} 2020. \url{https://theses.hal.science/tel-03126849/document}.
	
	
	
	\bibitem{YV22}
	Y. Tahraoui and G.Vallet. 
	\newblock	Lewy–Stampacchia’s inequality for a stochastic T-monotone obstacle problem.
	\newblock\textit{  Stoch PDE: Anal Comp} 10, 90–125,	2022. 
	
	



\bibitem{Zhang12}
T.~Zhang.
 \newblock	Large deviations for invariant measures of SPDEs with two reflecting walls.
 \newblock\textit{ Stochastic Processes and their Applications,} 122(10), 3425-3444,	2012.
 

	\end{thebibliography}

\end{document}